\DeclareMathAlphabet{\mymathbb}{U}{BOONDOX-ds}{m}{n}
\def\twocell[#1]{\arrow[#1, dash, phantom, "\Rightarrow"{scale=1.125, yshift=-.4pt, description, allow upside down, sloped, inner sep=0pt}]}
\tikzset{curve/.style={settings={#1},to path={(\tikztostart)
			.. controls ($(\tikztostart)!\pv{pos}!(\tikztotarget)!\pv{height}!270:(\tikztotarget)$)
			and ($(\tikztostart)!1-\pv{pos}!(\tikztotarget)!\pv{height}!270:(\tikztotarget)$)
			.. (\tikztotarget)\tikztonodes}},
	settings/.code={\tikzset{quiver/.cd,#1}
		\def\pv##1{\pgfkeysvalueof{/tikz/quiver/##1}}},
	quiver/.cd,pos/.initial=0.35,height/.initial=0}
\newtheorem{theorem}{Theorem}[section]
\newtheorem{corollary}[theorem]{Corollary}
\newtheorem{lemma}[theorem]{Lemma}
\newtheorem{proposition}[theorem]{Proposition}
\newtheorem{introthm}{Theorem}
\theoremstyle{definition}
\newtheorem*{claim*}{Claim}
\newtheorem{construction}[theorem]{Construction}
\newtheorem{variant}[theorem]{Variant}
\newtheorem{definition}[theorem]{Definition}
\newtheorem{example}[theorem]{Example}
\newtheorem{remark}[theorem]{Remark}
\newtheorem*{remark*}{Remark}
\newcommand{\qednow}{\pushQED{\qed}\qedhere\popQED}
\newcommand{\Cc}{{\mathcal{C}}}
\newcommand{\Dd}{{\mathcal{D}}}
\newcommand{\Ee}{{\mathcal{E}}}
\newcommand{\Mm}{{\mathcal{M}}}
\newcommand{\Ii}{{\mathcal{I}}}
\newcommand{\Ww}{{\mathcal{W}}}
\newcommand{\gl}{\textup{gl}}
\renewcommand{\phi}{\varphi}
\renewcommand{\epsilon}{\varepsilon}
\renewcommand{\S}{{{\mathscr S}}}
\DeclareMathOperator{\Sp}{Sp}
\DeclareMathOperator{\Spc}{Spc}
\DeclareMathOperator{\Cat}{Cat}
\DeclareMathOperator{\CAT}{CAT}
\DeclareMathOperator{\Fun}{Fun}
\DeclareMathOperator{\Ar}{Ar}
\DeclareMathOperator{\CMon}{CMon}
\DeclareMathOperator{\AdTrip}{AdTrip}
\DeclareMathOperator{\Span}{Span}
\DeclareMathOperator{\Mack}{Mack}
\newcommand{\cat}[1]{{\textup{#1}}}
\newcommand{\cato}[1]{{\textup{\textbf{#1}}}}
\newcommand{\op}{{\textup{op}}}
\DeclareMathOperator{\core}{\iota}
\newcommand{\id}{\textup{id}}
\newcommand{\fgt}{\textup{fgt}}
\newcommand{\Fin}{\hbox{$\mathcal F\kern-1.7pt\textit{in}$}}
\newcommand{\SymMonCat}{\cat{SymMonCat}}
\newcommand{\Glo}{\textup{Glo}}
\newcommand{\Fglo}{{\mathbb G}}
\newcommand{\Forb}{{\mathbb O}}
\newcommand{\ev}{{\textup{ev}}}
\newcommand{\ulhelper}[2]{\underline{\setbox0=\hbox{$#1#2$}\dp0=0.4pt \box0\relax}}
\newcommand{\ul}[1]{{\mathpalette\ulhelper{#1}}\hbox{\rule[-2pt]{0pt}{0pt}}}
\newcommand{\Swan}{{\textup{\textsc{Swan}}}}
\newcommand{\GammaS}{\Gamma{\mathscr S}}
\newcommand{\mySp}{{\mathscr S\!p}}
\newcommand{\iso}{\xrightarrow{\;\smash{\raisebox{-0.25ex}{\ensuremath{\scriptstyle\sim}}}\;}}
\newcommand\noloc{%
	\nobreak
	\mspace{6mu plus 1mu}
	{:}
	\nonscript\mkern-\thinmuskip
	\mathpunct{}
	\mspace{2mu}
}
\title[Mackey functors and classical equivariant $K$-theory]{Mackey functors\\ and classical equivariant $\bm K$-theory}
\author{Tobias Lenz}
\address{Mathematisches Institut, Rheinische Friedrich-Wilhelms-Universität Bonn, Endenicher Allee 60, 53115 Bonn, Germany}
\begin{document}
\begin{abstract}
    We show that the spectral Mackey functors associated to the equivariant algebraic $K$-theory spectra of Guillou--May and Merling (originally constructed using pointset models) can be described purely $\infty$-categorically in terms of the monoidal Borel construction of Barwick--Glasman--Shah and Hilman. We moreover show how Pützstück's global version of the Borel construction provides an analogous description of the global spectral Mackey functors arising from Schwede's global algebraic $K$-theory spectra. 

	Our arguments crucially rely on techniques from parametrized higher category theory as well as on structural results on global and equivariant $K$-theory to avoid any explicit computations. 
\end{abstract}

\thanks{The author is an associate member of the Hausdorff Center for Mathematics at the University of Bonn (DFG GZ 2047/1, project ID 390685813).}
\maketitle 

\section{Introduction}
Nowadays, there are two approaches to $G$-equivariant stable homotopy theory for a fixed finite group $G$, peacefully coexisting despite fundamental philosophical differences. The classical approach, going back to the pioneering work of Lewis, May, and Steinberger \cite{LMS}, uses $G$-equivariant versions of classical pointset models of spectra, viewed through a suitable notion of \emph{$G$-equivariant weak equivalences}; we will refer to the $\infty$-category $\mySp_G$ arising from any of these models as the \emph{$\infty$-category of (genuine) $G$-spectra} below. On the other hand, in more recent years a purely $\infty$-categorical approach has risen to prominence, based on the idea that we can encode $G$-equivariant objects in terms of their \emph{genuine fixed points} for all subgroups $H\subset G$ together with suitable structure maps relating them. Unstably, this idea is not at all new, and is put on a rigorous footing by Elmendorf's Theorem \cite{elmendorf}, which in modern terminology says that the (genuine) homotopy theory of topological spaces with $G$-action can be described as the $\infty$-category $\Fun^\times(\mathbb F_G^\op,\Spc)$ of product-preserving presheaves on the 1-category $\mathbb F_G$ of finite $G$-sets. A first stable version of Elmendorf's result was obtained by Guillou and May \cite{guillou-may-ps}, who showed that the $\infty$-category $\mySp_G$ is equivalent to product-preserving spectral presheaves on (the local group completion of) a certain handcrafted $(2,1)$-category of \emph{spans} of finite $G$-sets. Shortly after Guillou and May first announced their result, Barwick \cite{barwick2017spectral} developed a general theory allowing to build an $\infty$-category $\Span(\Cc)$ of spans in any suitable $\infty$-category $\Cc$, leading to a general theory of \emph{spectral Mackey functors}. Specializing this to $\Cc=\mathbb F_G$ yields an $\infty$-category $\Span(\mathbb F_G)$, which we can then use to define the $\infty$-category
\[
	\Mack_G(\Sp)\coloneqq\Fun^\times(\Span(\mathbb F_G),\Sp)\simeq\Fun^\times(\Span(\mathbb F_G)^\op,\Sp)
\]
of \emph{$G$-equivariant spectral Mackey functors}. While it isn't clear a priori that this construction agrees with the one of Guillou--May, Clausen--Mathew--Nauman--Noel \cite{cmnn} showed that we indeed have a preferred equivalence $\Mack_G(\Sp)\simeq\mySp_G$.

\subsection*{Equivariant algebraic $\bm K$-theory}
For both of the above approaches, equivariant versions of (group completion) \emph{algebraic $K$-theory} have been developed.
For the classical approach, there are two (equivalent) constructions due to Guillou--May \cite{guillou-may} and Merling \cite{merling}, taking a symmetric monoidal 1-category with $G$-action $\Cc$ as input, and then proceeding via an explicit construction on the level of models to produce a genuine $G$-spectrum $\mathbb K_G(\Cc)$.
Using the higher categorical description of equivariant stable homotopy theory, on the other hand, it is natural to start with a $G$-equivariant Mackey functor in symmetric monoidal $\infty$-categories, and then simply apply the classical non-equivariant $K$-theory construction pointwise. If we are instead given a plain symmetric monoidal (1- or $\infty$-)category with $G$-action, we can produce such a categorical $G$-equivariant Mackey functor via the right adjoint $\smash{(-)^\flat_G}$ to the forgetful functor $\smash{\Fun^\times(\Span(\mathbb F_G),\SymMonCat_\infty)\to\Fun(BG,\SymMonCat_\infty)}$ (evaluating at the free $G$-orbit). In the case of a symmetric monoidal $1$-category $\Cc$, one can also describe $\Cc^\flat_G$ quite explicitly in more classical terms: it records the categories of \emph{homotopy fixed points} for subgroups $H\subset G$ together with  the evident restriction maps for subgroup inclusions as well as the usual \emph{symmetric monoidal norms} \cite[Appendix A]{CHLL_NRings}. Note that this construction can also be easily adapted to other flavors of $K$-theory like Waldhausen $K$-theory or the $K$-theory of stable $\infty$-categories, and various such versions have appeared in the literature \cite{spectral-mackey-two, hilman2022parametrised, Elmanto_Haugseng_Bispans}.

While the above two definitions of equivariant algebraic $K$-theory are a priori unrelated, we prove in this paper:

\begin{introthm}[See Theorem~\ref{thm:equiv-comp} for a precise statement]\label{introthm:equivariant}
	The diagram
	\[
		\begin{tikzcd}
			\Fun(BG,\SymMonCat_1)\arrow[d,"\mathbb K_G"']\arrow[r, "(-)^\flat_G"] &[.25em] \Mack_G(\SymMonCat_\infty)\arrow[d, "\mathbb K\circ-"]\\
			\mySp_G\arrow[r,"\sim"'] & \Mack_G(\Sp)
		\end{tikzcd}
	\]
	commutes up to preferred equivalence.
\end{introthm}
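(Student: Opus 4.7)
The plan is to avoid direct pointset comparisons and instead to factor both vertical functors in the diagram through the intermediate $\infty$-category $\Mack_G(\SymMonCat_\infty)$, then to identify the two factorizations in turn. By construction, the right-hand path is the composite of $(-)^\flat_G$ with the pointwise non-equivariant $K$-theory functor $\mathbb K\circ-$, so the task becomes to exhibit an analogous factorization of $\mathbb K_G$ through $\Mack_G(\SymMonCat_\infty)$ and to match the two factors.

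For the first step, I would lift the Guillou--May/Merling construction through $\Mack_G(\SymMonCat_\infty)$. Unwinding $\mathbb K_G$, the passage from $\Cc$ to $\mathbb K_G(\Cc)$ proceeds by forming suitable categorical fixed points, organizing them compatibly with restrictions along subgroup inclusions and with symmetric monoidal norm-type transfers, and only then applying an equivariant delooping machine. My first task would be to assemble these categorical data into a coherent object $\widetilde\Cc\in\Mack_G(\SymMonCat_\infty)$, producing a functor $\widetilde{(-)}\colon\Fun(BG,\SymMonCat_1)\to\Mack_G(\SymMonCat_\infty)$. Since $\widetilde\Cc(G/e)\simeq\Cc$ tautologically (with the given $G$-action), the adjunction $\ev_{G/e}\dashv(-)^\flat_G$ provides a natural comparison map $\widetilde\Cc\to\Cc^\flat_G$, which I would show is an equivalence pointwise via the known identification of the Guillou--May/Merling fixed-point categories with the homotopy fixed points $\Cc^{hH}$.

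For the second step, I would identify the equivariant delooping/group-completion part of Guillou--May/Merling, applied to objects of $\Mack_G(\SymMonCat_\infty)$, with pointwise non-equivariant $K$-theory. This should follow from a parametrized-stability result---a structural result on equivariant $K$-theory of the sort advertised in the abstract---saying that the equivariant group completion of a sufficiently structured Mackey functor in $\SymMonCat_\infty$ is computed pointwise.

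The main obstacle is the first step: coherently packaging the pointset fixed-point data of Guillou--May/Merling into a Mackey functor in $\SymMonCat_\infty$ and identifying this Mackey functor with $(-)^\flat_G$. The two sides are described by entirely different machinery---an explicit pointset assembly on the one hand, versus the abstract right Kan extension along $BG\hookrightarrow\Span(\mathbb F_G)$ underlying the monoidal Borel construction on the other---and matching the full transfer structure (the symmetric monoidal norms and their higher coherences) is likely where most of the technical work resides, and where the techniques from parametrized higher category theory advertised in the abstract enter most essentially.
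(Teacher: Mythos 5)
Your proposal takes a genuinely different route from the paper's, and unfortunately the route you sketch is precisely the one the paper argues is infeasible. Your plan hinges on the ``first step'': lifting the Guillou--May/Merling construction to a functor $\Fun(BG,\SymMonCat_1)\to\Mack_G(\SymMonCat_\infty)$. But $\mathbb K_G$ does not come packaged that way. The pointset construction produces a genuine $G$-spectrum (via a $\Gamma$-$G$-space model and an equivariant delooping machine), and the transfer maps in the resulting spectral Mackey functor are the \emph{homotopy-theoretic} transfers built from Thom--Pontryagin collapses; they are not assembled from the categorical norms that populate $\Cc^\flat_G$. Asserting that ``unwinding $\mathbb K_G$'' reveals a coherent $\Mack_G(\SymMonCat_\infty)$-valued intermediate object with those norm-type transfers is exactly the claim that needs to be proven, and your proposal offers no mechanism for producing the required $\infty$-coherent data. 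Even the first truncation of this comparison (at $\pi_0$, with no higher coherences) is already the subject of an involved computation in the literature, so the fully coherent version cannot realistically be done by hand.

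The paper instead sidesteps any direct confrontation with transfers via three moves you do not make. First, it globalizes: the equivariant statement is deduced from a $G$-global one, because the relevant universal properties are available for \emph{global} $\infty$-categories, not for a single $\Mack_G$. Second, it factors the $K$-theory functor not through Mackey functors in $\SymMonCat_\infty$ but through $G$-global $\Gamma$-spaces (equivalently, $\Mack_\gl(\Spc)$), and uses the $G$-global analogue of Mandell's theorem to show that $\Gamma_\gl$ is a Dwyer--Kan localization. This is the technical workhorse: once the $K$-theory functor (after localization) is known to be a left adjoint, and the Borel-construction composite is shown to be a right adjoint, the universal property of $\ul\GammaS_\gl^\text{spc}$ reduces the entire comparison to chasing a \emph{single object}, namely the free $E_\infty$-monoid on a point. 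Third, the equivariant case is obtained by post-composing the global comparison with forgetful functors, using compatibility statements proved elsewhere. The point is that the norm-vs-transfer match is never checked directly anywhere; it comes out for free from the uniqueness clause in the universal property. Your proposal leaves that match as ``where most of the technical work resides,'' but the paper's contribution is precisely to show that the work can be avoided.
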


We moreover prove a version of this theorem for \emph{global algebraic $K$-theory} in the sense of \cite{schwede-k}, which produces so-called \emph{global spectra} from symmetric monoidal 1-categories. While the $\infty$-category $\mySp_\gl$ of global spectra was again originally defined in terms of a pointset model, there is a notion of \emph{global Mackey functors} and an equivalence $\mySp_\gl\iso\Mack_\gl(\Sp)$ \cite{global-mackey,CLL_Spans}. Analogously to the equivariant case, we have a \emph{global Borel construction} $(-)^\flat\colon\SymMonCat_\infty\to\Mack_\gl(\SymMonCat_\infty)$ \cite{Elmanto_Haugseng_Bispans,puetzstueck-new} right adjoint to the forgetful functor, and we show:

\begin{introthm}
	[See Theorem~\ref{thm:main-comp} for a precise statement]\label{introthm:global}
	The diagram
	\begin{equation}\tag{$*$}
		\begin{tikzcd}
			\SymMonCat_1\arrow[d,"\mathbb K_\gl"']\arrow[r, "(-)^\flat"] &[.25em] \Mack_\gl(\SymMonCat_\infty)\arrow[d, "\mathbb K\circ-"]\\
			\mySp_\gl\arrow[r,"\sim"'] & \Mack_\gl(\Sp)
		\end{tikzcd}
	\end{equation}
	commutes up to preferred equivalence.
\end{introthm}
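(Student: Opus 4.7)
The plan is to deduce Theorem~\ref{introthm:global} from its equivariant counterpart Theorem~\ref{introthm:equivariant} by restricting to each finite group $G$. For every such $G$ there is a restriction functor $\res_G\colon \Mack_\gl(\Sp)\to \Mack_G(\Sp)$, induced by a suitable map $\Span(\mathbb F_G)\to \Span(\Glob)$, and these are jointly conservative; under the bottom equivalence $\mySp_\gl\simeq \Mack_\gl(\Sp)$ this corresponds to the standard fact that a map of global spectra is an equivalence iff it is an equivalence on all underlying $G$-spectra. Thus it will suffice to produce a preferred equivalence $\res_G\circ(\mathbb K\circ(-)^\flat)\simeq \res_G\circ\mathbb K_\gl$ for each $G$, together with enough coherence to assemble them into a single natural equivalence.

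The first step is to identify the restrictions of the two composites in $(*)$. On the categorical side, one should establish $\res_G\circ(-)^\flat\simeq(-)^\flat_G\circ\triv_G$, where $\triv_G\colon\SymMonCat_1\to\Fun(BG,\SymMonCat_1)$ equips a symmetric monoidal category with trivial $G$-action. This is morally a formal consequence: both Borel constructions are right adjoints to forgetful functors, and forgetting globally and then restricting to $G$ agrees with restricting to $G$ and then forgetting down to the underlying category with trivial action; concretely I would verify this from Pützstück's definition \cite{puetzstueck-new}. On the spectral side, the analogous identification $\res_G\circ\mathbb K_\gl\simeq \mathbb K_G\circ \triv_G$ (under $\mySp_G\simeq\Mack_G(\Sp)$) is one of the structural results on $K$-theory alluded to in the abstract: evaluating Schwede's global $K$-theory at the $G$-orbit recovers the classical Guillou--May/Merling equivariant $K$-theory of the trivially acted category. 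Granting both identifications, applying Theorem~\ref{introthm:equivariant} to $\triv_G\Cc$ yields a preferred equivalence $\res_G(\mathbb K\circ\Cc^\flat)\simeq \res_G(\mathbb K_\gl(\Cc))$ in $\Mack_G(\Sp)$ for every $\Cc$ and every $G$.

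The main obstacle is assembling this family of equivalences, for varying $G$, into a single natural equivalence of functors $\SymMonCat_1\to\Mack_\gl(\Sp)$---that is, upgrading a pointwise statement to a coherent one. A clean way to do this is to exhibit $\Mack_\gl$ as built from the equivariant Mackey functor categories via a parametrized right Kan extension (or limit) along the inclusion of orbits into $\Glob$, and to show that both composites in $(*)$ arise in this manner from their equivariant restrictions, compatibly with the equivalence supplied by Theorem~\ref{introthm:equivariant}. This is precisely the sort of bookkeeping best handled in the parametrized higher-categorical framework advertised in the abstract: working with parametrized adjunctions throughout, the levelwise equivalences produced by the equivariant theorem upgrade automatically to a natural equivalence of global spectral Mackey functors, and the equivalence $\mySp_\gl\simeq\Mack_\gl(\Sp)$ then delivers the desired commutativity of $(*)$.
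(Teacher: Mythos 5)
Your proposal inverts the paper's logical structure and in doing so runs into exactly the obstruction that the paper is designed to circumvent. In the paper, the global comparison (Theorem~\ref{thm:main-comp}) is proved \emph{first}, and the equivariant comparison (Theorem~\ref{thm:equiv-comp}) is then \emph{deduced} from it via \cite[Theorem~4.1.40]{g-global} (recalled as Proposition~\ref{prop:equiv-vs-gl}), which identifies Guillou--May/Merling $K$-theory with the underlying $G$-spectrum of $G$-global algebraic $K$-theory. So Theorem~\ref{introthm:equivariant} is not an independently available ingredient that you can feed into a bootstrapping argument; using it to prove Theorem~\ref{introthm:global} would be circular within the paper's framework.

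More seriously, the step you flag as ``the main obstacle''---assembling the pointwise equivalences over all $G$ into a single coherent natural equivalence---is the entire difficulty of the theorem, and the fix you suggest is not available. Joint conservativity of the functors $\res_G\colon\Mack_\gl(\Sp)\to\Mack_G(\Sp)$ only tells you whether a \emph{given} $2$-cell is invertible; it does not help you \emph{construct} the $2$-cell filling $(*)$. And the proposed description of $\Mack_\gl(\Sp)$ as a ``parametrized right Kan extension of the equivariant Mackey functor categories along the inclusion of orbits into $\Glob$'' is not something that is established or easy to establish: note that the paper explicitly warns (in the Variant following the universal property of $\ul\mySp_\gl$) that the forgetful functors $\mySp_\text{$G$-gl}\to\mySp_G$ do \emph{not} assemble into a global functor, which is a close cousin of what your Kan-extension picture would need. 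The paper avoids this trap entirely by working with a diagram of \emph{global functors} $\Fglo^\op\to\Cat_\infty$ from the start, using the $G$-global refinements of the constructions on both sides, and then invoking the universal property of $\ul\GammaS_\gl^\textup{spc}$ (Theorem~\ref{thm:pres-univ-prop-Gamma}) to reduce commutativity of the whole parametrized square $(\dagger)$ to chasing a single object; the coherence in $G$ is then automatic because the statement is a statement about global functors. That strategic move---proving the stronger parametrized statement precisely so that its universal property supplies the coherence---is what your proposal is missing, and what would need to be reinvented before the pointwise identifications you sketch (which are individually plausible and can indeed be extracted from \cite{puetzstueck-new} and Proposition~\ref{prop:equiv-vs-gl}) could be assembled into an actual proof.
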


Also the global Mackey functor $\Cc^\flat$ can be described rather explicitly for a symmetric monoidal $1$-category $\Cc$ \cite[Example~3.1]{puetzstueck-new}: it coherently organizes the categories of $G$-objects in $\Cc$ for all finite groups $G$ together with the evident restrictions and with symmetric monoidal norms defined as in the equivariant setting. Taking $K$-groups thus results in what is sometimes called the \emph{Swan $K$-theory} of $\Cc$.

\subsection*{How to prove (and how not to prove) Theorems~\ref{introthm:equivariant} and~\ref{introthm:global}} 
It is not hard to show that the above diagrams commute after postcomposing with any of the evaluation functors $\Mack_\gl(\Sp)\to\Sp$ or $\Mack_G(\Sp)\to\Sp$; namely, the bottom equivalence is in either case given pointwise by forming the appropriate \emph{(categorical) fixed point spectra}, and the fixed point spectra of the classical equivariant and global algebraic $K$-theory constructions have already been described in the literature. However, producing a comparison that also takes the structure maps encoded in a spectral $G$-Mackey functor into account is a whole different beast: for example, already \cite[Theorem~6.21]{schwede-k}, which essentially states that the diagram from Theorem~\ref{introthm:global} commutes after postcomposing with $\pi_0\colon\Mack_\gl(\Sp)\to\Mack_\gl(\text{Ab})$, is computationally quite involved. One key problem here is that the spectral Mackey functors associated to equivariant or global spectra in particular record the \emph{homotopy theoretic transfers}, which are defined geometrically in terms of Thom--Pontryagin collapse maps, and it is not at all clear why they should have anything to do with the purely categorically defined symmetric monoidal norms. Given these difficulties already at the level of $\pi_0$, proving the above \emph{fully coherent} comparisons by hand seems completely infeasible, and we have to look for a more abstract approach.

To motivate our strategy, let us take a step back and look at the non-equivariant algebraic $K$-theory of symmetric monoidal $1$-categories. Using $\infty$-categorical language throughout, we can break this up into two steps: the first one takes a symmetric monoidal 1-category, i.e.~an $E_\infty$-monoid in $\Cat_1$, and forms its \emph{classifying space}, yielding an $E_\infty$-monoid in $\Spc$; in the second step, we then form the group completion of this $E_\infty$-monoid and consider it as a (connective) spectrum. It is a somewhat surprising theorem of Mandell \cite{mandell-Gamma} that the first step in this process actually exhibits the $\infty$-category $\CMon$ of $E_\infty$-monoids as a localization of $\SymMonCat_1$, namely at those maps that induce equivalences of classifying spaces. Combining this with the universal property of $\CMon$ as the free presentable semiadditive $\infty$-category \cite{groth-gepner-nikolaus}, one can then uniquely characterize $\mathbb K\colon\SymMonCat_1\to\Spc$ by the following properties:
\begin{enumerate}
	\item $\mathbb K$ inverts all maps that induce equivalences of classifying spaces, and the induced functor from the corresponding localization to $\Sp$ is a left adjoint.
	\item $\mathbb K$ sends the symmetric monoidal groupoid of finite sets under disjoint union to the sphere spectrum (the \emph{Barratt--Priddy--Quillen Theorem}).
\end{enumerate}
Our proof relies on similar descriptions of the pointset constructions of equivariant and global algebraic $K$-theory. However, while \cite{g-global} provides the relevant equivariant and global versions of Mandell's result, there is a problem: the $\infty$-categories of (genuine) $G$-equivariant or global $E_\infty$-monoids on their own do \emph{not} come with a nice universal property.

In order to fix this, we move to the setting of \emph{parametrized higher category theory} in the sense of \cite{exposeI}. By \cite{CLL_Clefts}, we can assemble the $\infty$-categories of $G$-equivariant $E_\infty$-monoids for all finite groups $G$ together with the restriction functors between them into a \emph{global $\infty$-category}, i.e.\ a product-preserving functor $\Fglo^\op\to\Cat_\infty$, where $\Fglo$ denotes the $\infty$-category of finite $1$-groupoids, and this global $\infty$-category admits a universal property analogous to the one for $\CMon$. In the global case, we have to more generally consider the \emph{$G$-global $E_\infty$-monoids} from \cite{g-global}; varying $G$ these again assemble into a global $\infty$-category $\ul\GammaS_\gl^\text{spc}$ satisfying a similar universal property \cite{CLL_Global}. This way, we can for example characterize the family of equivariant algebraic $K$-theory functors with $G$ varying through all finite groups in an analogous manner to the characterization of non-equivariant $K$-theory above, allowing us to completely forget about its concrete pointset construction. Another advantage of the parametrized perspective is that there are also universal properties for the global $\infty$-categories of $G$-equivariant or $G$-global spectra, which in particular provides uniqueness results for the equivalences $\smash{\mySp_G\iso\Mack_G(\Sp)}$ and $\smash{\mySp_\text{$G$-gl}\iso\Mack_\text{$G$-gl}(\Sp)}$ as soon as we require them to vary naturally in $G$, releasing us from the burden to justify that the unnamed equivalences in Theorems~\ref{introthm:equivariant} and~\ref{introthm:global} are the `correct' ones.

Note that if we want to prove Theorem~\ref{introthm:global} on global algebraic $K$-theory along the above lines, we are forced to consider its generalization to \emph{$G$-global algebraic $K$-theory} in the sense of \cite{g-global}; on the other hand, this $G$-global comparison will turn out to also imply Theorem~\ref{introthm:equivariant} rather easily. Most of the present paper is thus devoted to proving commutativity of a certain diagram of \emph{global functors} (i.e.\ natural transformations of functors $\Fglo^\op\to\Cat_\infty$) 
\begin{equation*}
	\begin{tikzcd}
		\SymMonCat_1^\flat\arrow[r]\arrow[d,"\mathbb K_\gl"'] & \ul\Mack_\gl(\SymMonCat_\infty)\arrow[d,"\mathbb K\circ-"]\\
		\ul\mySp_\gl\arrow[r,"\sim"'] & \ul\Mack_\gl(\Sp)
	\end{tikzcd}
\end{equation*}
lifting the square $(*)$ from Theorem~\ref{introthm:global}; here $\SymMonCat_1^\flat$ denotes the global category sending a finite group $G$ to the $\infty$-category of symmetric monoidal $1$-categories with $G$-action, with the evident restrictions. By the results from \cite{g-global}, we know that the bottom path through this diagram descends to a left adjoint global functor after suitably localizing the top left corner, so if we could show that also the upper path descends to a left adjoint, it would be enough by the universal property to chase through a single object. While one could indeed prove this adjointness statement directly, it will be more convenient to split the comparison into two steps: analogously to its non-equivariant counterpart, the $G$-global algebraic $K$-theory functor can be obtained as a composition of a functor building a $G$-global $E_\infty$-monoid from a symmetric monoidal category with $G$-action, followed by a delooping functor producing a (connective) $G$-global spectrum from this. This yields a factorization of the global functor $\mathbb K_\gl$ through the global $\infty$-category $\ul\GammaS_\gl^\text{spc}$, and the technical heart of our argument is Theorem~\ref{thm:swan-equivalence}, which establishes commutativity of the analogous diagram
\begin{equation}\tag{$\dagger$}
	\begin{tikzcd}
		\SymMonCat_1^\flat\arrow[r]\arrow[d] & \ul\Mack_\gl(\SymMonCat_\infty)\arrow[d]\\
		\ul\GammaS_\gl^\text{spc}\arrow[r,"\sim"'] & \ul\Mack_\gl(\Spc)\rlap,
	\end{tikzcd}
\end{equation}
where the vertical map on the left now even descends to an \emph{equivalence}. Instead of showing that the upper path induces a left adjoint, it will then be easier to show that it induces a \emph{right} adjoint. Upon passing to adjoints everywhere, the universal property of $\ul\GammaS_\gl^\text{spc}$ then translates to saying that we can check commutativity of $(\dagger)$ after postcomposing with the forgetful functor $\ul\Mack_\gl(\Spc)\to\Spc$, which as mentioned before is actually straightforward.

\subsection*{Outline} In Section~\ref{sec:prelim} we recall the necessary pointset models and Mackey functor models of $G$-global homotopy theory along with the construction of $G$-global algebraic $K$-theory. We moreover introduce the requisite background from parametrized higher category theory to state the relevant universal properties from \cite{CLL_Global}.

Section~\ref{sec:heart} constitutes the technical heart of the paper: we construct the global functor $\SymMonCat_1^\flat\to\ul\Mack_\gl(\SymMonCat_\infty)$ lifting the global Borel construction $(-)^\flat\colon\SymMonCat_1\to\Mack_\gl(\SymMonCat_\infty)$, and we show that the diagram $(\dagger)$ commutes, following the strategy outlined above.

Finally, we deduce both Theorems~\ref{introthm:equivariant} and~\ref{introthm:global} from this comparison in Section~\ref{sec:main-results}.

\subsection*{Historical remark}
I originally proved a variant of Theorem~\ref{introthm:global} as part of the first version of the arXiv preprint \cite{global-mackey} (back then titled `Global algebraic $K$-theory is Swan $K$-theory'); this used a handcrafted equivalence between global spectra and global spectral Mackey functors, produced in the same paper. While my original argument already used the global version of Mandell's theorem to reduce the comparison of $K$-theory constructions to a more manageable subcategory of $\SymMonCat_1$, the parametrized results used crucially in the present paper were not yet available at that time, and so I ultimately had to resort to some rather technical, 2-categorical computations. As the parametrized approach allows for a much cleaner proof of a more general comparison result, I have decided to remove the original version of Theorem~\ref{introthm:global} from \cite{global-mackey} and instead devote the present paper to the proof of the more general comparison.

\subsection*{Notational convention} Throughout, we will denote $\infty$-categories in ordinary upright font---for example, we write $\Cat_\infty$ for the $\infty$-category of small $\infty$-categories, and $\Cat_1\subset\Cat_\infty$ for the full subcategory of $1$-truncated $\infty$-categories. We will distinguish $1$-categorical models of $\infty$-categories by writing them in boldface font, so that for example $\cato{Cat}$ denotes the $1$-category of small $1$-categories (which comes with a functor $\cato{Cat}\to\Cat_1$ that is a Dwyer--Kan localization at the equivalences of categories).

\subsection*{Acknowledgements} I am grateful to Phil Pützstück for helpful comments on an earlier version of this paper. It is moreover a pleasure to thank Bastiaan Cnossen and Sil Linskens for the collaboration on all things parametrized, without which this article would not have been possible.

\section{Global homotopy theory and global category theory}\label{sec:prelim}
In this section we will introduce our reference models of global homotopy theory in the sense of \cite{schwede2018global,hausmann-global}, and more generally of \emph{$G$-global homotopy theory} in the sense of \cite{g-global} for any finite group $G$. While we will be ultimately interested in ($G$-)global versions of spectra, the appropriate notions of spaces and $E_\infty$-monoids will play an important role in both the construction of global algebraic $K$-theory as well as in the proof of our main results. 

\subsection{The unstable story} We therefore begin with our pointset model of unstable $G$-global homotopy theory, which relies on a certain simplicial monoid $E\mathcal M$ typically referred to as the \emph{universal finite group} in this context:

\begin{construction}
	Write $\mathcal M$ for the (discrete) monoid of self-injections of the countably infinite set $\omega=\{0,1,2,\dots\}$, with monoid structure given by composition. 

	We now consider the right adjoint $E\colon\cat{Set}\to\cat{Cat}_1$ of the functor sending a small $1$-category to its set of objects; applying this to the above monoid $\Mm$, we obtain a categorical monoid (i.e.~strict monoidal category) $\mathcal M$. We will also write $E\mathcal M$ for the simplicial monoid obtained as the nerve of this; explicitly, this is the simplicial set satisfying $(E\mathcal M)_n=\hom_{\cato{Set}}([n],\mathcal M)$ with the evident simplicial structure maps and with pointwise multiplication.
\end{construction}

Note that $E\mathcal M$ is of course neither finite nor a group; nevertheless, the moniker `universal finite group' is justified by the fact that we can embed any finite group in a canonical way into $E\mathcal M$:

\begin{definition}
	A finite subgroup $H\subset\mathcal M$ is called \emph{universal} if the restriction of the tautological $\Mm$-action on $\omega$ turns the latter into a \emph{complete $H$-set universe}, meaning that every finite $H$-set embeds (non-uniquely) into it.
\end{definition}

\begin{lemma}[See {\cite[Lemma~1.2.8]{g-global}}]
	Let $H$ be a finite group. Then there exists an injective homomorphism $H\to\Mm$ whose image is a universal subgroup. Moreover, any two such homomorphisms agree up to conjugation by an invertible element of $\Mm$.\qed
\end{lemma}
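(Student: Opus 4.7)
The plan is to translate the statement into a question about $H$-actions on $\omega$ and then invoke the orbit decomposition of countable $H$-sets. Given any homomorphism $\phi\colon H\to\Mm$ from a finite group, each $\phi(h)$ is a self-injection of $\omega$ satisfying $\phi(h)^{|H|}=\mathrm{id}$ and is therefore a bijection. Hence $\phi$ is the same datum as an action of $H$ on $\omega$ by permutations: injectivity corresponds to faithfulness, universality of the image corresponds to $\omega$ being a complete $H$-set universe, and conjugation by an invertible element of $\Mm$ (i.e.\ by an element of $\mathrm{Sym}(\omega)$) corresponds to an $H$-equivariant bijection between two such actions. The lemma thus reduces to two assertions: (i) there \emph{exists} a faithful action of $H$ on $\omega$ making it into a complete $H$-set universe, and (ii) any two such actions are isomorphic as $H$-sets.

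For (i), I would take the countable $H$-set
\[
    X\coloneqq\bigsqcup_{K\leq H}\bigsqcup_{n\in\omega} H/K,
\]
which is well-defined and countably infinite because $H$ has only finitely many subgroups. Transporting the $H$-action along any bijection $X\iso\omega$ produces a faithful action (as the free orbit $H/\{e\}$ occurs in $X$), and any finite $H$-set—being a finite disjoint union of transitive $H$-sets $H/K$—embeds into $X$ since each orbit type occurs with unbounded multiplicity. For (ii), I would use that a countable $H$-set is determined up to isomorphism by its multiplicity function $K\mapsto\#\{\text{orbits of type }H/K\}\in\omega\cup\{\infty\}$; so any complete universe is isomorphic to the model $\bigsqcup_{K\leq H}(H/K)^{\sqcup\omega}$, and hence any two complete universes are isomorphic to each other. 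Choosing an $H$-equivariant bijection of $\omega$ with itself then gives the required conjugating element of $\Mm$.

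The only point requiring attention is the passage from the stated completeness condition—asking that \emph{every finite} $H$-set embed—to the seemingly stronger statement that \emph{each} orbit type $H/K$ already occurs infinitely often. This is automatic because $(H/K)^{\sqcup n}$ is itself a finite $H$-set for every $n\in\omega$, so it must embed, forcing infinite multiplicity. Beyond this small bookkeeping step, I do not anticipate any serious obstacle.
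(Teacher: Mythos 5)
Your argument is correct. The paper does not itself prove this lemma (it is stated with \cite{g-global} as the reference, and the \qed marks it as cited), but your proof---recasting the claim as a classification of countable $H$-sets via orbit decomposition, noting that any group homomorphism lands in $\mathrm{Sym}(\omega)\subset\Mm$, and observing that completeness forces every orbit type $H/K$ to occur with multiplicity exactly $\aleph_0$, hence a unique isomorphism type---is the standard argument and matches what appears in the cited source.
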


As a consequence, an object with an $E\mathcal M$-action in a complete simplicial category has fixed points for every abstract finite group, as one would expect from a global object. And indeed, while an $E\mathcal M$-action in the homotopy coherent sense does not contain any extra information (the simplicial set $E\mathcal M$ is weakly contractible), $E\mathcal M$-actions on the pointset level turn out to conveniently encode global homotopy theoretic information:

\begin{construction}
	Fix a finite group $G$, and let $X\in\cato{$\bm{E\mathcal M}$-$\bm G$-SSet}$, be an $E\mathcal M$-$G$-simplical set, i.e.~a simplicial set with a (strict) action by the simplicial monoid $E\mathcal M\times G$. For every universal subgroup $H$ of $\mathcal M$ and every homomorphism $\phi\colon H\to G$ we will write $X^\phi$ for the (1-categorical) fixed points with respect to the \emph{graph subgroup} $\Gamma_{H,\phi}\coloneqq\{(h,\phi(h)):h\in H\}\subset E\mathcal M\times G$.

	We then define the class $\Ww_\text{$G$-gl}$ of \emph{$G$-global weak equivalences} to consist of all those maps $f\colon X\to Y$ in $\cato{$\bm{E\mathcal M}$-$\bm{G}$-SSet}$ such that $f^\phi\colon X^\phi\to Y^\phi$ is a weak homotopy equivalence for all universal $H$ and all $\phi\colon H\to G$. The $\infty$-category $\S_{\text{$G$-gl}}$ of \emph{$G$-global spaces}  is the Dwyer--Kan localization of the 1-category $\cato{$\bm{E\mathcal M}$-$\bm G$-SSet}$ at the $G$-global weak equivalences.
\end{construction}

By \cite[Corollary~1.2.34]{g-global}, the $G$-global weak equivalences participate in a combinatorial model structure; in particular, $\S_\text{$G$-gl}$ is a presentable $\infty$-category.

\begin{remark}
	While the above model is often the most convenient one, there are various alternative model categories of $G$-global spaces, each (Quillen) equivalent to it, see \cite[Chapter 1]{g-global}. One of these alternative models will play a role in this paper, though only as a blackbox:

	Let us write $I$ for the 1-category of finite sets and injections, and $\mathcal I$ for the simplicial category obtained by applying $E$ to all hom sets of $I$. Then there are various model structures on the 1-category $\cato{$\bm{G}$-$\bm{\mathcal I}$-SSet}$ of \emph{$G$-$\mathcal I$-simplicial sets} \cite[Section~1.4]{g-global} (i.e.~$G$-objects in the 1-category of enriched functors $\mathcal I\to\cato{SSet}$) all sharing the same class of weak equivalences and modelling the $\infty$-category $\S_\text{$G$-gl}$.
\end{remark}

In addition to these pointset models, there is also a purely $\infty$-categorical description of $G$-global spaces as a suitable presheaf category; we can view this comparison as a $G$-global version of Elmendorf's theorem \cite{elmendorf}. Stating the strongest version of this result will require some parametrized higher category theory, and we therefore begin with the necessary setup.

\begin{definition}
	We write $\Fglo$ for the 2-category of finite 1-groupoids, and we write $\Glo\subset\Fglo$ for the full subcategory spanned by the connected (and hence in particular non-empty) finite 1-groupoids.
\end{definition}

We will conflate a finite group $G$ with the corresponding 1-object groupoid (otherwise often denoted $BG$). In this interpretation, the objects of $\Glo$ are given by finite groups, its $1$-morphisms by group homomorphisms, and it has a $2$-cell $\phi\Rightarrow g\cdot \phi(-)\cdot g^{-1}$ for every homomorphism $\phi\colon H\to G$ and every $g\in G$.

\begin{definition}
	A \emph{global $\infty$-category} is a functor $\Glo^\op\to\CAT_\infty$ into the large $\infty$-category of $\infty$-categories. We write $\CAT_{\gl,\infty}\coloneqq\Fun(\Glo^\op,\CAT_\infty)$ for the large $\infty$-category of global $\infty$-categories. 
\end{definition}

As the inclusion $\Glo\hookrightarrow\Fglo$ exhibits the target as the free finite coproduct completion of the source, we may equivalently view a global $\infty$-category as a product preserving functor $\Fglo^\op\to\Cat_\infty$. In particular, while we will often only specify the values of a global $\infty$-category on $\Glo$, we will nevertheless plug in general finite 1-groupoids; whenever we do so, we have tacitly passed to the unique product preserving extension (i.e.~the right Kan extension along $\Glo^\op\hookrightarrow\Fglo^\op$).

\begin{example}\label{ex:Borel}
	Let $\Cc$ be any $\infty$-category. Then we obtain a \emph{global Borel $\infty$-category} $\Cc^\flat$ as the composite
	\[
		\Fglo^\op\hookrightarrow\Cat_\infty^\op\xrightarrow{\;\Fun(-,\Cc)\;}\CAT_\infty.
	\]
	Note that the inclusion $\Fglo\hookrightarrow\Spc$ is the pointwise left Kan extension of the inclusion of the terminal object (as is the inclusion of \emph{any} full subcategory containing the terminal object, by the co-Yoneda Lemma), whence so is $\Fglo\hookrightarrow\Cat$ since $\Spc\hookrightarrow\Cat$ is cocontinuous. We conclude that $\Cc^\flat$ is right Kan extended from the inclusion $1\hookrightarrow\Fglo$, and that $(-)^\flat\colon\CAT_\infty\to\CAT_{\gl,\infty}$ is right adjoint to the functor $\ev_{1}\colon\CAT_{\gl,\infty}\to\CAT_\infty$ sending a global $\infty$-category $\Cc$ to its \emph{underlying $\infty$-category} $\Cc(1)$.
\end{example}

\begin{example}\label{ex:Sgl}
	Specializing the previous example, we obtain a global $\infty$-category $\cato{$\bm{E\mathcal M}$-SSet}^\flat$ sending a finite group $G$ to $\cato{$\bm{E\mathcal M}$-$\bm G$-SSet}$ and a group homomorphism $f\colon H\to G$ to the evident restriction functor $f^*$. It is clear from the definitions that $f^*$ sends $G$-global weak equivalences to $H$-global ones; thus, passing to the appropriate Dwyer--Kan localizations we obtain a global $\infty$-category $\ul\S_\gl\coloneqq\cato{$\bm{E\mathcal M}$-SSet}^\flat[\Ww_\gl^{-1}]$ sending a finite group $G$ to the $\infty$-category of $\S_\text{$G$-gl}$ of $G$-global spaces, and a homomorphism $f\colon H\to G$ to the total derived functor of the pointset level restriction functor.
\end{example}

There is also a second, more `genuine' way to build a global $\infty$-category from a plain $\infty$-category, and this will yield the presheaf model of $G$-global spaces:

\begin{example}\label{ex:sub-gl}
	We write $\Glo_{/-}\colon\Glo\to\Cat$ for the functor sending $G\in\Glo$ to the slice $\Glo_{/G}$, with functoriality given by postcomposition; more formally, this is the cocartesian straightening of the target map $\ev_1\colon\Ar(\Glo)\to\Glo$. Given any $\infty$-category $\Cc$, we then define the global $\infty$-category $\ul{\Cc}_\gl$ as the composite
	\[
		\Glo^\op\xrightarrow{\;(\Glo_{/-})^\op\;}\Cat_\infty^\op\xrightarrow{\;\Fun(-,\Cc)\;}\CAT_\infty.
	\]
\end{example}

\begin{remark}\label{rk:alternative-sub-gl}
	If $\Cc$ has finite products, we can equivalently describe the above as the global $\infty$-category $\Fglo^\op\to\CAT_\infty$ sending $G\in\Fglo$ to $\Fun^\times(\Fglo_{/G}^\op,\Cc)$, with the evident functoriality.
\end{remark}

With this at hand, we can now state our comparison between the global and model categorical approaches to unstable $G$-global homotopy theory:

\begin{theorem}[See {\cite[Theorem~3.3.1]{CLL_Global}}]\label{thm:unique-equiv-unstable}
	There exists a unique equivalence $\ul\Spc_\gl\simeq\ul{\S}_\gl$ of global $\infty$-categories.\qed
\end{theorem}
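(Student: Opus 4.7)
The approach is to prove both existence and uniqueness by appealing to a universal property of $\ul\Spc_\gl$ as a \emph{free parametrized presentable $\infty$-category} in the sense of parametrized higher category theory \cite{exposeI,CLL_Global}. Levelwise, $\ul\Spc_\gl(G)=\Fun(\Glo_{/G}^\op,\Spc)\simeq\PSh(\Glo_{/G})$ is the free cocompletion of $\Glo_{/G}$; the slice fibration $\Glo_{/-}\colon\Glo\to\Cat_\infty$ assembles these into a parametrized Yoneda embedding, and the resulting universal property characterizes $\ul\Spc_\gl$ up to unique equivalence among presentable global $\infty$-categories equipped with a suitable generator.

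To verify that $\ul{\S}_\gl$ shares the same universal property, I would identify compact generators of each $\S_{\text{$G$-gl}}$ coming from the pointset model. Since $G$-global weak equivalences are detected by $\Gamma_{H,\phi}$-fixed points, the orbits $(E\Mm\times G)/\Gamma_{H,\phi}$, for $H\subset\Mm$ universal and $\phi\colon H\to G$, generate $\S_{\text{$G$-gl}}$ under colimits. Because every finite group embeds as a universal subgroup of $\Mm$, uniquely up to $\Mm$-conjugacy, these pairs $(H,\phi)$ are naturally the objects of $\Glo_{/G}$; a mapping-space computation, using that the $\Gamma$-fixed points of the $E$-construction reproduce the mapping groupoids of $\Glo$, upgrades this to a fully faithful functor $\Glo_{/G}\to\S_{\text{$G$-gl}}$ landing in a set of compact generators. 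A density argument then yields a levelwise equivalence $\S_{\text{$G$-gl}}\simeq\PSh(\Glo_{/G})$, and checking that pointset-level restriction along $f\colon H\to G$ matches the corresponding transition functor in $\ul\Spc_\gl$ is a direct orbit-decomposition argument, promoting the levelwise equivalences to an equivalence of global $\infty$-categories.

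Uniqueness is then formal from the universal property: any two equivalences are determined by their value on the canonical generator, which is forced on both sides to be the terminal parametrized object. The main obstacle, to my mind, is the mapping-space and naturality computation in the second paragraph---namely, checking \emph{coherently} in $G$ that the orbits $(E\Mm\times G)/\Gamma_{H,\phi}$ recover $\Glo_{/G}$ together with its $2$-cell structure, rather than merely an equivalent $1$-category. Managing this coherence, as well as the coherence of the restriction functors, is exactly the kind of issue the parametrized framework of \cite{exposeI,CLL_Global} is designed to handle, and it is where the bulk of the technical work lies.
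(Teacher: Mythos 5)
The paper does not actually prove this statement: it is imported wholesale from \cite[Theorem~3.3.1]{CLL_Global}, and the surrounding text only uses its consequences (the universal property of $\ul\Spc_\gl$ in Theorem~\ref{thm:univ-prop-spcgl} and the pointwise description of the equivalence in Remark~\ref{rk:unstable-pw-desc}). So there is no in-paper proof to compare against, and I can only assess your outline on its own merits.

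Your Elmendorf-style strategy is the right idea, and your uniqueness argument is sound: equivalences preserve terminal objects, and by Theorem~\ref{thm:univ-prop-spcgl} a left adjoint global functor out of $\ul\Spc_\gl$ is determined by its value on the terminal object, whose space of choices is contractible. One thing I would push back on is the final step of your existence argument, where you claim that "checking that pointset-level restriction along $f$ matches the corresponding transition functor" is enough to "promote the levelwise equivalences to an equivalence of global $\infty$-categories". Passing from a family of equivalences $\S_{\text{$G$-gl}}\simeq\PSh(\Glo_{/G})$ to an equivalence of functors $\Glo^\op\to\CAT_\infty$ requires \emph{constructing} a natural transformation, with all higher coherences, and compatibility at the level of $1$-cells is genuinely weaker. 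A cleaner route that sidesteps this: first verify that $\ul\S_\gl$ is a presentable global $\infty$-category (the fiberwise presentability is cited, and the ambidexterity of restriction/coinduction for the pointset models gives global coproducts), then invoke Theorem~\ref{thm:univ-prop-spcgl} to obtain a canonical left adjoint global functor $L\colon\ul\Spc_\gl\to\ul\S_\gl$ sending $1\mapsto1$, and finally check \emph{pointwise} that each $L_G$ is an equivalence. Your mapping-space/density computation is then exactly what is needed for this last pointwise check, but the global naturality comes for free. Finally, you are right that the coherent identification of the orbits $(E\Mm\times G)/\Gamma_{H,\phi}$ with $\Glo_{/G}$ including its $2$-cells is the technical heart; that is precisely the content of the cited theorem, and without working it out the outline remains an outline rather than a proof.
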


In fact, the global $\infty$-category $\ul{\Spc}_\gl$ has a universal property, and hence so has $\ul{\S}_\gl$. In order to state this property, we first need a couple of further definitions.

We begin by observing that the $\infty$-category $\CAT_{\gl,\infty}$ of global $\infty$-categories is cartesian closed since $\CAT_\infty$ is so. We write $\ul\Fun_\gl(\Cc,\Dd)$ for the internal hom, and $\Fun_\gl(\Cc,\Dd)\coloneqq\ul\Fun_\gl(\Cc,\Dd)(1)$ for its underlying $\infty$-category. This way, one could upgrade $\CAT_{\gl,\infty}$ to a large $(\infty,2)$-category; all that we will need below, however, is that this allows us to define the homotopy $(2,2)$-category of global $\infty$-categories, and hence it in particular makes sense to talk about \emph{adjunctions} of global $\infty$-categories. Throughout this paper, we will use the following `pointwise' characterization of adjunctions:

\begin{lemma}[See {\cite[Proposition~3.2.9 and Corollary~3.2.11]{martiniwolf2021limits}}]\label{lemma:adjunctions-criterion}
	Let $F\colon\Cc\to\Dd$ be a global functor. Then $F$ is a left adjoint if and only if the following two conditions are satisfied:
	\begin{enumerate}
		\item For each $G\in\Glo$, the functor $F_G\colon \Cc(G)\to\Dd(G)$ admits a right adjoint $U_G$.
		\item For each $f\colon G\to H$ in $\Glo$, the Beck--Chevalley map\footnote{We refer the reader to \cite[Appendix~C]{CLL_Adams} for background material about Beck--Chevalley transformations.} $f^*U_H\to U_Gf^*$ associated to the naturality square
		\[
			\begin{tikzcd}
				\Cc(H)\arrow[d,"f^*"']\arrow[r,"F_H"] & \Dd(H)\arrow[d,"f^*"]\\
				\Cc(G)\arrow[r, "F_G"'] & \Dd(G)
			\end{tikzcd}
		\]
		is an equivalence.
	\end{enumerate}
	Moreover, in this case condition (1) holds more generally for all $G\in\Fglo$, and condition (2) similarly holds for all $f$ in $\Fglo$. The right adjoint $U$ is given pointwise for each $G\in\Fglo$ by $U_G$.
	
	The dual characterization of parametrized right adjoints holds.\qed
\end{lemma}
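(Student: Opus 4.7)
The plan is to recognize this statement as the standard pointwise characterization of adjunctions in a functor $(\infty,2)$-category, with the Beck--Chevalley condition encoding exactly the coherence required to assemble pointwise right adjoints into a global natural transformation. I would organize the argument into two implications plus an extension step from $\Glo$ to $\Fglo$.

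For the ``only if'' direction, assume $F \dashv U$ in the $(\infty,2)$-category $\CAT_{\gl,\infty}$ with unit $\eta$ and counit $\epsilon$. Evaluation at each $G \in \Glo$ is a $2$-functor, hence sends the adjunction to an adjunction $F_G \dashv U_G$ with unit $\eta_G$ and counit $\epsilon_G$, establishing condition~(1). For condition~(2), the global naturality of $U$ along $f\colon G \to H$ provides an equivalence $f^*U_H \simeq U_G f^*$, and a direct mate calculation identifies this with the Beck--Chevalley transformation of the naturality square for $F$.

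For the converse, I would assemble the pointwise right adjoints via unstraightening: global $\infty$-categories correspond to cocartesian fibrations over $\Glo^\op$, and global functors to maps preserving cocartesian lifts. The pointwise right adjoints $U_G$ combine into a functor of underlying total categories, using the $U_G$ fiberwise and the Beck--Chevalley mates on morphisms, and the hypothesis that all these mates are equivalences is precisely the condition that this functor preserves cocartesian edges, thereby defining a global functor $U\colon \Dd \to \Cc$. The pointwise units and counits then patch to global natural transformations $\eta$ and $\epsilon$: their naturality in the $\Glo^\op$-direction follows from condition~(2) together with the standard adjunction mate identities, and the triangle identities hold globally because they hold pointwise. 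The main technical obstacle is carrying out this construction at full $\infty$-categorical precision, juggling coherences beyond the $(2,2)$-truncated level; this is exactly the content of \cite{martiniwolf2021limits}.

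Once the adjunction has been produced over $\Glo$, the extension to $\Fglo$ is automatic: global $\infty$-categories are product-preserving and both right adjoints and Beck--Chevalley conditions behave well with respect to products, so conditions~(1) and~(2) propagate from $\Glo$ to $\Fglo$ and the pointwise formula for $U_G$ follows by inspection. The dual characterization of global right adjoints follows by the same argument applied to the opposite $2$-cells.
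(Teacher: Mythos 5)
The paper does not prove this lemma; it is cited directly from \cite{martiniwolf2021limits} with a terminal $\square$, so there is no internal proof to compare against. Your sketch is nonetheless a sound reconstruction of how the cited result is established: the forward implication via $2$-functoriality of evaluation, the converse via (un)straightening where the Beck--Chevalley condition is exactly the requirement that the fiberwise right adjoints assemble into a cocartesian-edge-preserving map (equivalently, that the lax natural transformation produced by the fiberwise relative adjoint is strict), and the extension to $\Fglo$ by product-preservation. You correctly flag that the genuine work is the $\infty$-categorical coherence beyond the $(2,2)$-truncation, which is precisely why the paper delegates to the reference rather than arguing inline.
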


Assume now that the restrictions $f^*\colon\Cc(H)\to\Cc(G)$ and $\Dd(H)\to\Dd(G)$ admit left adjoints $f_!$ for every $f\in\Fglo$. Passing to total mates, we see that the Beck--Chevalley transformation $f^*U_H\to U_Gf^*$ from Condition (2) above is invertible if and only if the Beck--Chevalley transformation $f_!F_G\to F_Hf_!$ is so. This condition is often more convenient to check, and it moreover admits a conceptual interpretation as a parametrized version of coproduct preservation:

\begin{definition}
	Let $\Cc$ be a global category. We say that $\Cc$ \emph{has global coproducts} if the functor $f^*\colon\Cc(H)\to\Cc(G)$ admits a left adjoint for every $f\colon G\to H$ in $\Fglo$, and if moreover for every pullback
	\begin{equation}\label{diag:bc-pb}
		\begin{tikzcd}
			G'\arrow[r,"f'"]\arrow[dr,phantom,"\lrcorner"{very near start}]\arrow[d,"g"'] & H'\arrow[d,"h"]\\
			G\arrow[r, "f"'] & H
		\end{tikzcd}
	\end{equation}
	in $\Fglo$, the Beck--Chevalley transformation $f'_!g^*\to h^*f_!$ is invertible. If also $\Dd$ has global coproducts, then we say that $F\colon\Cc\to\Dd$ \emph{preserves global coproducts} if the Beck--Chevalley map $f_!F_G\to F_Hf_!$ is invertible for every $f\colon G\to H$ in $\Fglo$.

	Dually, we define what it means for $\Cc$ and $\Dd$ to have \emph{global products} and for $F$ to \emph{preserve global products}.
\end{definition}

\begin{variant}
	If in the above situation we instead only require the existence of adjoints and the basechange formula in the case where $f$ (and hence also $f'$) is \emph{faithful}, we say that $\Cc$ has \emph{equivariant (co)products}. The definition of what it means to \emph{preserve equivariant (co)products} should then be obvious.
\end{variant}

\begin{remark}
	In contrast to the characterization of adjunctions, it is important in the above definition that we consider all maps in $\Fglo$, and not just in $\Glo$; for example, the existence of the left adjoints for fold maps $\nabla\coloneqq(\id,\id)\colon G\amalg G\to G$ yields the existence of ordinary (finite) coproducts for each $\Cc(G)$, while the basechange condition amounts to demanding that each of the restriction functors $f^*\colon\Cc(G)\to\Cc(H)$ preserve these coproducts \cite[Proposition~4.2.14]{CLL_Global}.
\end{remark}

\begin{remark}
	Again, there is also an `internal' characterization of global and equivariant (co)products in terms of adjunctions of parametrized functor categories; we will not need this alternative perspective, but refer the curious reader to \cite[Example~4.1.3, Remark~4.1.5, and Proposition~5.4.2]{martiniwolf2021limits}.
\end{remark}

\begin{example}\label{ex:Borel-prod}
	Let $\Cc$ be a cocomplete $\infty$-category. Then $\Cc^\flat$ has global coproducts \cite[Example~2.15]{CLL_Spans}. Dually, if $\Cc$ is complete, then $\Cc^\flat\simeq \big((\Cc^\op)^\flat\big)^\op$ has global products.
\end{example}

\begin{example}[cf.~{\cite[Lemma~8.13]{CLL_Spans}}]\label{ex:gl-glproducts}
	Let $\Cc$ be an $\infty$-category with finite products. Then $\ul\Cc_\gl$ has global products: using the alternative description from Remark~\ref{rk:alternative-sub-gl}, we note that each $f_!\colon\Fglo_{/G}\to\Fglo_{/H}$ admits a coproduct preserving right adjoint $f^*$ (given by pullback), so that the restriction functor $\Fun^\times(\Fglo_{/H}^\op,\Cc)\to\Fun^\times(\Fglo_{/G}^\op,\Cc)$ has a right adjoint given by restriction along $f^*$ thanks to 2-functoriality of $\Fun^\times(-,\Cc)$. By the same argument, it will suffice for the Beck--Chevalley condition that the Beck--Chevalley map $g_!f^{\prime*}\to f^*h_!$ (of functors $\Fglo_{/H'}\to \Fglo_{/G}$) associated to
	\[
		\begin{tikzcd}
			\Fglo_{/G'}\arrow[r,"f'_!"]\arrow[d,"g_!"'] & \Fglo_{/H'}\arrow[d,"h_!"]\\
			\Fglo_{/G}\arrow[r, "f_!"'] & \Fglo_{/H}
		\end{tikzcd}
	\]
	is invertible for every pullback $(\ref{diag:bc-pb})$, which is just a reformulation of the pasting law for pullbacks.
\end{example}

\begin{definition}
	A global $\infty$-category $\Cc\colon\Glo^\op\to\CAT_\infty$ is \emph{presentable} if it has global coproducts and is moreover \emph{fiberwise presentable} in the sense that it factors through the non-full subcategory $\Pr^\textup{L}\subset\CAT_\infty$ of presentable $\infty$-categories and left adjoint functors.
\end{definition}

With this terminology at hand, we can now finally state the universal property of $\ul\Spc_\gl$:

\begin{theorem}[See {\cite[Corollary~2.4.11]{CLL_Global}} or {\cite[Theorem~7.1.1]{martiniwolf2021limits}}]\label{thm:univ-prop-spcgl}
	The global $\infty$-category $\ul\Spc_\gl$ is globally presentable. Moreover, if $\Dd$ is any other presentable global $\infty$-category, then evaluation at the terminal object $1\in\Spc_\gl\coloneqq\ul\Spc_\gl(1)$ determines an equivalence
	\[
		\Fun^\textup{L}_\gl(\ul\Spc_\gl,\Dd)\iso\Dd(1),
	\]
	where the left hand side denotes the full subcategory of left adjoint functors.\qed
\end{theorem}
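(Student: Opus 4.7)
The plan is to break the theorem into two parts: verifying that $\ul\Spc_\gl$ is globally presentable, and then establishing the universal property. For presentability I would check each clause directly from the definition in Example~\ref{ex:sub-gl}: each $\Spc_\gl(G)=\Fun(\Glo_{/G}^\op,\Spc)$ is a presheaf $\infty$-category, hence presentable; for $f\colon G\to H$ in $\Glo$ the restriction $f^*$ is precomposition with the postcomposition functor $f_!\colon\Glo_{/G}\to\Glo_{/H}$ and therefore admits a left adjoint $f_!$ by pointwise left Kan extension. For the Beck--Chevalley condition in the definition of global coproducts, I would reduce via the pointwise Kan extension formula and the Yoneda lemma to showing that the slice functor $\Glo_{/-}$ sends pullbacks in $\Fglo$ to pullback squares of $\infty$-groupoids, which is a form of the pasting law for slices.

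Next, to establish the universal property, I would exhibit an explicit inverse to the evaluation functor $\Fun^\textup{L}_\gl(\ul\Spc_\gl,\Dd)\to\Dd(1)$ given by $F\mapsto F_1(\ast)$ for $\ast\in\Spc_\gl(1)$ the terminal object. Uniqueness is conceptually forced: a global functor commutes with restrictions $f^*$, and a global left adjoint further commutes with the left adjoints $f_!$ via the Beck--Chevalley condition. Writing $p_K\colon K\to 1$ for the unique map in $\Glo$ and $X\in\Dd(1)$ for the prescribed object, the value of any such $F$ on a representable presheaf $y(\phi\colon K\to G)\simeq\phi_!p_K^*\ast$ in $\Spc_\gl(G)$ is therefore forced to be $\phi_!p_K^*X$; fiberwise cocontinuity then pins down $F_G$ on all of $\Spc_\gl(G)$ via the presentation of an arbitrary $U\in\Spc_\gl(G)$ as a colimit of representables. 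For existence I would take the resulting expression $U\mapsto\colim_{(\phi,u)}\phi_!p_K^*X$ as the definition of a candidate $F_X$ and verify that it assembles into a global functor which is in fact a global left adjoint.

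The main obstacle I expect is not a single deep lemma but the coherent bookkeeping involved in producing $F_X$ as a bona fide natural transformation $\ul\Spc_\gl\to\Dd$ of functors $\Glo^\op\to\CAT_\infty$ and then verifying the Beck--Chevalley condition promoting it to a global left adjoint. Both reduce to the same pullback-compatibility of $\Glo_{/-}$ that entered the presentability argument, but the coherence chase is tedious. Conceptually the cleanest route---and the one I would follow for a robust proof---is to identify $\ul\Spc_\gl$ as the free presentable $\Glo$-$\infty$-category on the terminal $\Glo$-$\infty$-category in the sense of Martini--Wolf, so that the theorem becomes an instance of the free-cocompletion universal property for parametrized presheaf $\infty$-categories; this is the perspective taken in \cite{CLL_Global,martiniwolf2021limits}, to which I would defer for the final coherence-heavy verifications.
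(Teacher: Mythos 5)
The paper supplies no proof of this statement: it is cited directly from \cite{CLL_Global} and \cite{martiniwolf2021limits} and closed with a $\square$. Your closing paragraph correctly identifies the result as the parametrized free-cocompletion/presheaf universal property established in those sources, and you explicitly defer the coherence-heavy verification to them; so in effect you take the same route as the paper, with your sketch serving as a reasonable summary of what the cited references actually prove (fiberwise presentability of the presheaf fibers, existence of the parametrized left adjoints $f_!$ via Kan extension, Beck--Chevalley from the pasting law for slices, and the inverse to evaluation built by left Kan extension along the inclusion of the point).

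One small imprecision worth flagging in the presentability check: the clause ``fiberwise presentable'' in the paper's definition asks that $f^*$ itself be a \emph{left} adjoint, i.e.\ that it admit a \emph{right} adjoint; the existence of a left adjoint $f_!$ to $f^*$ that you mention belongs instead to the ``global coproducts'' clause. Both adjoints exist here because $f^*$ is precomposition between presheaf $\infty$-categories (pointwise left and right Kan extension), so nothing is actually missing, but your write-up only addresses one side explicitly.
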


We close this subsection by giving a pointwise description of the unique equivalence $\Phi\colon\ul\S_\gl\iso\ul\Spc_\gl$:

\begin{remark}\label{rk:unstable-pw-desc}
	Let $H\subset\Mm$ be a universal subgroup. We claim that for every $\phi\colon H\to G$ in $\Glo$ the composite
	\[
		\S_\text{$G$-gl}\xrightarrow[\smash{\raise2pt\hbox{$\scriptstyle\sim$}}]{\;\Phi_G\;} \Fun(\Glo_{/G}^\op,\Spc)\xrightarrow{\;\ev_\phi\;}\Spc
	\]
	agrees with the derived functor of $(-)^\phi\colon\cato{$\bm{E\mathcal M}$-$\bm G$-SSet}\to\cato{SSet}$. While this could be deduced by carefully looking at the concrete construction given in \cite[Subsection~3.3]{CLL_Global}, we provide a more abstract argument:

	For this, let us first look at the inverse equivalence $\Lambda\colon\ul\Spc_\gl\to\ul\S_\gl$. We consider the $E\mathcal M$-$H$-simplicial set $E\mathcal M$ with $H$ acting via multiplication on the right. By \cite[Example~1.2.35]{g-global}, this then models the terminal object of $\S_\text{$H$-gl}$, which in turn agrees with $\Lambda_H(1)$ as equivalences preserve terminal objects. On the other hand, $E\mathcal M$ also represents the functor $(-)^{\id_H}\colon\cato{$\bm{E\mathcal M}$-$\bm H$-SSet}\to\cato{SSet}$ in the simplicially enriched sense; as this functor is homotopical and since $E\mathcal M$ is cofibrant in the $H$-global model structure \cite[Corollary~1.2.34 and Example~1.2.35]{g-global}, we conclude that the terminal object $\Lambda_H(1)$ of $\S_\text{$H$-gl}$ represents the total derived functor of $(-)^{\id_H}$. Passing to right adjoints, then immediately implies the claim for $\phi=\id_H$. The general case follows from this as $\ev_\phi\Phi_G\simeq \ev_{\id_H}\phi^*\Phi_G\simeq\ev_{\id_H}\Phi_H\phi^*$ by definition of the functoriality of $\ul\Spc_\gl$ and naturality of $\Phi$, respectively.
\end{remark}

\subsection{Ultra-commutativity} Next, we turn to a (genuine) $G$-global version of \emph{$E_\infty$-monoids}. Our reference model is a certain refinement of Segal's description of classical $E_\infty$-monoids in terms of \emph{special $\varGamma$-spaces} \cite{segal} and Shimakawa's model of genuine $G$-$E_\infty$-monoids via \emph{special $\varGamma_G$-spaces} \cite{shimakawa,shimakawa-simplify}:

\begin{construction}
	We will denote the 1-category of finite pointed sets by $\Gamma$.\footnote{Beware that some authors use the notation $\Gamma^\op$ for this category.} For every $n\ge0$, we write $n^+$ for the set $\{0,1,\dots,n\}$ with basepoint 0. 
	
	We now consider the $1$-category $\cato{$\bm\Gamma$-$\bm{E\mathcal M}$-$\bm G$-SSet}_*$ of reduced functors $X\colon\Gamma\to \cato{$\bm{E\mathcal M}$-$\bm G$-SSet}$, i.e.~functors such that $X(0^+)$ is terminal. A map $f\colon X\to Y$ in $\cato{$\bm\Gamma$-$\bm{E\mathcal M}$-$\bm G$-SSet}_*$ is called a \emph{$G$-global level weak equivalence} if $f(n^+)\colon X(n^+)\to Y(n^+)$ is a $(\Sigma_n\times G)$-global weak equivalence for every $n\ge0$, where the symmetric group acts via the functoriality of $X$ and $Y$. The $\infty$-category $\GammaS_\text{$G$-gl}$ of \emph{$G$-global $\varGamma$-spaces} is then defined as the Dwyer--Kan localization of $\cato{$\bm\Gamma$-$\bm{E\mathcal M}$-$\bm G$-SSet}_*$ at the $G$-global level weak equivalences. We write $\GammaS_\text{$G$-gl}^\text{spc}$ for the full subcategory spanned by those $G$-global $\Gamma$-spaces that are \emph{special}, in the sense that the usual Segal map $X(n^+)\to\prod_{i=1}^n X(1^+)$ is a $(G\times\Sigma_n)$-global weak equivalence for every $n\ge 0$; here $\Sigma_n$ acts on the left hand side as before and on the right hand side via permuting the factors of the product.
\end{construction}

By \cite[Corollary~2.2.53 and Theorem~2.2.55]{g-global}, also $\GammaS_\text{$G$-gl}^\text{spc}$ is presentable.

\begin{remark}
	Again, there are various alternative models, see \cite[Chapter~2]{g-global}, \cite{g-gl-operad}, or \cite{starmod}, and in particular there is a model $\cato{$\bm\Gamma$-$\bm G$-$\bm{\mathcal I}$-SSet}_*$ in terms of $G$-$\mathcal I$-spaces, which will occur as a blackbox later. As we will recall in the next section, one can moreover model `$G$-global $E_\infty$-monoids' by plain old symmetric monoidal 1-categories with $G$-action, which will be one of the key ingredients for the proof of our main theorems.
\end{remark}

Analogously to Example~\ref{ex:Sgl}, we can build a global $\infty$-category $\ul\GammaS_\gl$ sending $G\in\Glo$ to $\GammaS_\text{$G$-gl}$ with the evident restriction functors. The full subcategories $\GammaS_\text{$G$-gl}^\text{spc}\subset\GammaS_\text{$G$-gl}$ then assemble into a global subcategory $\ul\GammaS_\gl^\text{spc}$, see \cite[Definitions~5.1.1 and~5.1.6]{CLL_Global}. Both of these come with a forgetful functor $\mathbb U$ to $\ul\S_\gl$, obtained by deriving $(\ev_{1^+})^\flat\colon\cato{$\bm\Gamma$-$\bm{E\mathcal M}$-SSet}_*^\flat\to\cato{$\bm{E\mathcal M}$-SSet}^\flat$.

Also the global $\infty$-category $\ul\GammaS_\gl^\text{spc}$ enjoys a universal property; this relies on the notion of \emph{equivariant semiadditivity}, which roughly demands that a global $\infty$-category have equivariant products and that the requisite right adjoints $f_*$ to restrictions should also simultaneously be \emph{left adjoint} to restrictions, in a specific way. For the purposes of our paper, we will treat the notion of equivariant semiadditivity as a blackbox; the curious reader is referred to \cite[Definition~4.5.1 and Example~4.5.2]{CLL_Global} for a precise definition. We then have:

\begin{theorem}[See {\cite[Theorem~5.3.5]{CLL_Global}}]\label{thm:pres-univ-prop-Gamma}
	The global $\infty$-category $\ul\GammaS_\gl^\textup{spc}$ is presentable and equivariantly semiadditive. For any other such $\Dd$, evaluation at $\mathbb P(1)\in\GammaS_\gl^\textup{spc}$ determines an equivalence
	\[
		\Fun_\gl^\textup{L}(\ul\GammaS_\gl^\textup{spc},\Dd)\iso\Dd(1),
	\]
	where $\mathbb P\colon\ul\S_\gl\to\ul\GammaS_\gl^\textup{spc}$ is left adjoint to the forgetful functor $\mathbb U$.\qed
\end{theorem}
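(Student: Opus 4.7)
The natural plan is to prove presentability, equivariant semiadditivity, and the universal property in turn, reducing the third to the universal property of $\ul\Spc_\gl$ (Theorem~\ref{thm:univ-prop-spcgl}) via a freeness property of $\mathbb P$. The guiding principle is that $\ul\GammaS_\gl^\textup{spc}$ should play the parametrized role analogous to the classical fact that $\CMon=\Fun^\times(\Gamma,\Spc)$ is the free presentable semiadditive $\infty$-category on one generator \cite{groth-gepner-nikolaus}.

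For presentability, I would start from the pointwise presentability of $\GammaS_\text{$G$-gl}^\textup{spc}$ and observe that the restriction functors are left adjoints: they arise from restriction along group homomorphisms at the pointset level, hence preserve all colimits, and the existence of right adjoints follows from the adjoint functor theorem. Global coproducts then come out of a parametrized adjoint functor argument, exploiting that $\ul\GammaS_\gl^\textup{spc}$ is a reflective subcategory of the global $\infty$-category of all $\Gamma$-spaces, whose global coproducts can be described explicitly. For equivariant semiadditivity, the key idea is that the special Segal condition is a parametrized incarnation of an $E_\infty$-structure: the $\Sigma_n$-equivariance built into a special $\Gamma$-space, combined with Segal-type identifications, lets one construct norm maps $f_!\to f_*$ along faithful $f\colon H\to G$, and a fiberwise reduction to the classical Segal theorem should show these are equivalences.

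For the universal property itself, the crucial step is to establish that $\mathbb P$ exhibits $\ul\GammaS_\gl^\textup{spc}$ as the free equivariantly semiadditive presentable global $\infty$-category under $\ul\S_\gl$: for any equivariantly semiadditive presentable $\Dd$, restriction along $\mathbb P$ should induce an equivalence
\[
    \Fun^\textup{L}_\gl(\ul\GammaS_\gl^\textup{spc},\Dd)\iso\Fun^\textup{L}_\gl(\ul\S_\gl,\Dd).
\]
Composing with the evaluation equivalence of Theorem~\ref{thm:univ-prop-spcgl}, and tracking where the terminal object of $\S_\gl$ is sent under the unit of $\mathbb P\dashv\mathbb U$, then yields the claim.

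I expect the main obstacle to be this freeness property of $\mathbb P$. Its classical analogue can be shown directly via the co-Yoneda lemma on $\Gamma$, but the parametrized version must simultaneously accommodate restriction functoriality along arbitrary group homomorphisms and the Beck--Chevalley conditions for pullbacks of faithful maps. Setting up these coherences carefully and verifying them against the precise definition of equivariant semiadditivity from \cite[Definition~4.5.1]{CLL_Global} is where the bulk of the technical work should lie.
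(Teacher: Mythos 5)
The paper does not prove this theorem at all: it is cited verbatim from \cite[Theorem~5.3.5]{CLL_Global} and carries a \qed with no argument, so there is no internal proof to compare your sketch against. That said, the cited reference does follow the shape you describe — establish that $\ul\GammaS_\gl^\textup{spc}$ is the ``equivariant semiadditivization'' of $\ul\Spc_\gl\simeq\ul\S_\gl$ and then compose with Theorem~\ref{thm:univ-prop-spcgl} — and your bookkeeping at the end (tracking $\mathbb P(1)$ through the composite of equivalences) is correct, so your plan is aimed in the right direction.

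The gaps are in the two middle steps, both of which constitute most of the content of the cited result rather than being checkable side conditions. Your argument for equivariant semiadditivity — that the $\Sigma_n$-equivariance and Segal maps of a special $\Gamma$-space ``let one construct norm maps $f_!\to f_*$'' — confuses a fiberwise $E_\infty$-structure with a genuinely parametrized one. Specialness certainly gives each $\GammaS^\textup{spc}_{\text{$G$-gl}}$ the structure of a semiadditive $\infty$-category, but equivariant semiadditivity in the sense of \cite[Definition~4.5.1]{CLL_Global} is a statement about the interaction between the left and right adjoints of restriction along \emph{faithful maps of groupoids}, asking that a canonically defined Wirthmüller/norm transformation $f_!\Rightarrow f_*$ be an equivalence. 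There is no short route from the Segal condition to these norms being invertible; in \cite{CLL_Global} this is extracted from a systematic theory of parametrized semiadditivization and a comparison with the span/Mackey model. Likewise, the freeness statement you rely on — that precomposition with $\mathbb P$ induces an equivalence $\Fun^\textup{L}_\gl(\ul\GammaS_\gl^\textup{spc},\Dd)\iso\Fun^\textup{L}_\gl(\ul\S_\gl,\Dd)$ for every equivariantly semiadditive presentable $\Dd$ — is precisely the substantive theorem being cited, so it cannot be treated as an input; it amounts to identifying $\ul\GammaS_\gl^\textup{spc}$ as a specific Bousfield localization of $\ul\S_\gl$-modules and requires genuine work (a parametrized analogue of the free/forgetful adjunction between $\PrL$ and its semiadditive counterpart, together with an explicit identification of the localization). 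You flag some of this yourself, but the sketch as written doesn't supply an idea that would close either gap.
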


\begin{remark}
	The object $\mathbb P(1)$ (the free `genuine global $E_\infty$-monoid') can also be described very explicitly in terms of the pointset model, see \cite[Proposition~4.2.1 or Theorem~4.2.22]{g-global}.
\end{remark}

Again, there is also an alternative purely $\infty$-categorical description of $\ul\GammaS_\gl^\text{spc}$; this relies on Barwick's \emph{span categories} \cite{barwick2017spectral}, which we will recall now.

\begin{definition}
	An \emph{adequate triple} consists of an $\infty$-category $\Cc$ together with two wide subcategories $\Cc_L,\Cc_R$ with the following property: for every (solid) cospan
	\[
		\begin{tikzcd}
			\cdot\arrow[r,dashed,"r'"]\arrow[dr,"\lrcorner"{very near start},phantom]\arrow[d,"\ell'"',dashed] & \cdot\arrow[d,"\ell"]\\
			\cdot\arrow[r,"r"'] &\cdot
		\end{tikzcd}
	\]
	in $\Cc$, where $\ell\in\Cc_L$ and $r\in\Cc_R$, the (dashed) pullback exists, the map $\ell'$ again belongs to $\Cc_L$, and $r'$ belongs to $\Cc_R$. A \emph{morphism of adequate triples} $(\Cc,\Cc_L,\Cc_R)\to(\Dd,\Dd_L,\Dd_R)$ is a functor $\Cc\to\Dd$ sending $\Cc_L$ to $\Dd_L$, $\Cc_R$ to $\Dd_R$, and preserving the above pullbacks.
\end{definition}

We refer the reader e.g.~to \cite[Definition~2.12]{HHLNa} for the formal construction of the functor $\Span\colon\AdTrip\to\Cat_\infty$ sending an adequate triple $(\Cc,\Cc_L,\Cc_R)$ to its \emph{span category} $\Span(\Cc,\Cc_L,\Cc_R)$ (originally called \emph{effective Burnside category} in \cite{barwick2017spectral}). Below we will only consider adequate triples where $\Cc_L=\Cc$, in which case we will abbreviate $\Span_R(\Cc)\coloneqq\Span(\Cc,\Cc,\Cc_R)$. Informally, this $\infty$-category has the same objects as $\Cc$, with morphisms from $x$ to $y$ given by spans $x\gets z\to y$ such that the right-pointing morphism belongs to $\Cc_R$; composition is given via pullback. The $\infty$-category $\Span_R(\Cc)$ comes with natural inclusions $\Cc^\op\hookrightarrow\Span_R(\Cc)$ and $\Cc_R\hookrightarrow\Span_R(\Cc)$ identifying the sources with the wide subcategories of left- and right-pointing morphisms, respectively, see \cite[Proposition~2.15]{HHLNa} and the discussion preceeding it.

\begin{construction}
	Recall the functor $\Fglo_{/-}\colon\Fglo\to\Cat_\infty$ from Example~\ref{ex:sub-gl}. We write $\Forb\subset\Fglo$ for the wide subcategory of faithful functors, and consider the subfunctor $\Fglo_{/-}\times_{\Fglo}\Forb$. Then $(\Fglo_{/G},\Fglo_{/G},\Fglo_{/G}\times_{\Fglo}\Forb)$ is an adequate triple for every $G\in\Fglo$ as faithful functors are stable under pullback. As moreover each postcomposition functor $\Fglo_{/G}\to\Fglo_{/H}$ preserves pullbacks, we have therefore constructed a functor $\Fglo\to\text{AdTrip}$; passing to spans, we then obtain $\Span_{\Forb}(\Fglo_{/-})\colon\Fglo\to\Cat_\infty$.

	By e.g.~\cite[Proposition~2.2.5 and Example~3.1.9]{CHLL_NRings}, $\Span_{\Forb}(\Fglo_{/G})$ is semiadditive, and the inclusion $(\Fglo_{/G})^\op\hookrightarrow\Span_{\Forb}(\Fglo_{/G})$ preserves finite products.
	Given any $\infty$-category $\Cc$ with finite products, we can therefore define the global $\infty$-category $\ul\Mack_\gl(\Cc)$ as the composite
	\[
		\Fglo^\op\xrightarrow{\;\Span_{\Forb}(\Fglo_{/-})\;}(\Cat_\infty^\times)^\op\xrightarrow{\;\Fun^\times(-,\Cc)\;}\CAT_\infty.		
	\]
\end{construction}

\begin{theorem}\label{thm:canonical-equiv-cmon}
	The $\infty$-category $\ul\Mack_\gl(\Spc)$ of \emph{global Mackey functors} is presentable and equivariantly semiadditive, and the unique left adjoint $\ul\GammaS_\gl^\textup{spc}\to\ul\Mack_\gl(\Spc)$ sending $\mathbb P(1)$ to $\hom(1,-)\colon\Span_{\Forb}(\Fglo)\to\Spc$ is an equivalence. Moreover, this equivalence fits into a commutative diagram
	\begin{equation}\label{diag:compatible-fgt}
		\begin{tikzcd}
			\ul\GammaS_\gl^\textup{spc}\arrow[r,"\sim"]\arrow[d,"\mathbb U"'] & \ul\Mack_\gl(\Spc)\arrow[d,"\fgt"]\\
			\ul\S_\gl\arrow[r,"\sim"'] & \ul\Spc_\gl\rlap,
		\end{tikzcd}
	\end{equation}
	where $\fgt$ is the forgetful functor given by restriction along $\Fglo_{/-}^\op\hookrightarrow\Span_{\Forb}(\Fglo_{/-})$.
	\begin{proof}
		By \cite[Corollary~9.14]{CLL_Spans}, the pair $(\ul\Mack_\gl(\Spc),\hom(1,-))$ enjoys the same universal property as the pair $(\ul{\GammaS}_\gl^\textup{spc},\mathbb P(1))$ (see Theorem~\ref{thm:pres-univ-prop-Gamma} above), proving the first statement.

		For the compatibility with the forgetful functors we observe that chasing the adjunction from \cite[Corollary~7.24]{CLL_Spans} through the equivalences of Theorem~8.4 and Corollary~9.13 of \emph{op.\ cit.}\ shows that $\fgt$ has a left adjoint. We may therefore check commutativity of $(\ref{diag:compatible-fgt})$ after passing to left adjoints.
		
		By the Yoneda Lemma, the left adjoint of $\fgt$ has to send the terminal object (i.e.~the functor corepresented by $1\in\Glo^\op$) to the functor $\hom(1,-)$. We conclude that the diagram of left adjoints commutes when plugging in the terminal global space, so the claim follows from the universal property of $\ul\Spc_\gl$.
	\end{proof}
\end{theorem}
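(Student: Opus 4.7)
The plan is to identify $\ul\Mack_\gl(\Spc)$ by showing it satisfies the same universal property as $\ul\GammaS_\gl^\textup{spc}$ from Theorem~\ref{thm:pres-univ-prop-Gamma}, and then to handle the compatibility with forgetful functors by passing to left adjoints and invoking the universal property of $\ul\Spc_\gl$ from Theorem~\ref{thm:univ-prop-spcgl}. Concretely, I would first check that $\ul\Mack_\gl(\Spc)$ is presentable (each pointwise value $\Fun^\times(\Span_\Forb(\Fglo_{/G}),\Spc)$ is presentable, and the restriction functors preserve small colimits since they are given by precomposition with a product-preserving functor of span categories) and equivariantly semiadditive. The latter is expected to follow from general features of $\Span_\Forb(\Fglo_{/-})$: the semiadditivity of each fiber is already recorded in the discussion preceding the theorem, and the wrong-way maps furnished by the span structure should assemble into the requisite left-and-right adjointness of the restriction functors along faithful maps.

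The main obstacle is establishing the universal property itself, i.e.\ that for every presentable equivariantly semiadditive global $\infty$-category $\Dd$, evaluation at $\hom(1,-)$ induces an equivalence $\Fun_\gl^\textup{L}(\ul\Mack_\gl(\Spc),\Dd)\iso\Dd(1)$. Morally, one expects $\hom(1,-)$ to be the free object on a point in the sense that $\ul\Mack_\gl(\Spc)$ is freely generated under parametrized colimits and equivariant-semiadditive norms by this representable; concretely, I would try to exhibit $\ul\Mack_\gl(\Spc)$ as a parametrized presheaf category on a suitable subcategory of $\ul\Spc_\gl$, combine a Yoneda-style density argument with the fact that norms along faithful maps are encoded by the span structure, and then apply a parametrized version of the usual extension-by-colimits procedure. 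Granting this, Theorem~\ref{thm:pres-univ-prop-Gamma} immediately produces a preferred equivalence $\ul\GammaS_\gl^\textup{spc}\iso\ul\Mack_\gl(\Spc)$ sending $\mathbb P(1)$ to $\hom(1,-)$.

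For the compatibility square $(\ref{diag:compatible-fgt})$, I would check that $\fgt\colon\ul\Mack_\gl(\Spc)\to\ul\Spc_\gl$, being restriction along the wide subcategory inclusion $\Fglo_{/-}^\op\hookrightarrow\Span_\Forb(\Fglo_{/-})$, admits a left adjoint globally: pointwise this is a left Kan extension, and the Beck--Chevalley condition of Lemma~\ref{lemma:adjunctions-criterion} reduces to a basechange statement for spans, which holds because pullback along faithful maps preserves the relevant cospans. With both vertical forgetful functors now having left adjoints, it suffices to check commutativity of the diagram of left adjoints. By the Yoneda Lemma, the left adjoint to $\fgt$ must send the terminal object of $\ul\Spc_\gl$ (represented by $\id_1\in\Glo^\op$) to $\hom(1,-)$, while $\mathbb P$ sends it to $\mathbb P(1)$ by definition; under the equivalence from the previous step these two objects correspond. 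Hence the two left adjoint composites $\ul\Spc_\gl\to\ul\Mack_\gl(\Spc)$ agree on the terminal object, and the universal property of $\ul\Spc_\gl$ (Theorem~\ref{thm:univ-prop-spcgl}) upgrades this agreement to a canonical equivalence of global left adjoints, completing the proof.
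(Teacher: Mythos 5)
Your overall strategy matches the paper's: establish that $\ul\Mack_\gl(\Spc)$ together with $\hom(1,-)$ satisfies the same universal property as $(\ul\GammaS_\gl^{\textup{spc}},\mathbb P(1))$ to obtain the equivalence, then handle the compatibility square by passing to left adjoints, using the Yoneda lemma to see that the left adjoint of $\fgt$ sends the terminal global space to $\hom(1,-)$, and finishing via the universal property of $\ul\Spc_\gl$. This is precisely the structure of the paper's proof.

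The one place where you diverge is the universal property of $\ul\Mack_\gl(\Spc)$ itself. You correctly flag this as ``the main obstacle,'' but the sketch you offer---exhibiting $\ul\Mack_\gl(\Spc)$ as a parametrized presheaf category, a Yoneda-style density argument, an extension-by-colimits procedure---is a plan rather than a proof, and carrying it out is substantial work (this is in effect re-deriving the content of \cite[Corollary~9.14]{CLL_Spans}, which the paper simply cites). Similarly, you propose to verify the Beck--Chevalley condition for the left adjoint of $\fgt$ by hand; the paper instead obtains the adjunction by chasing through existing results in \emph{op.~cit.} Neither of these is an error, but as written your argument leaves the heaviest lifting unproved. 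If you allow yourself the external citation for the universal property (as the paper does), the rest of your argument is complete and correct.
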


\begin{remark}
	By the universal property of commutative monoids \cite[Corollary~2.5]{groth-gepner-nikolaus}, postcomposition with the forgetful functor yields an equivalence $\ul\Mack_\gl(\CMon)\iso\ul\Mack_\gl(\Spc)$. At various points, it will be convenient to switch between these two perspectives on global Mackey functors, and we will freely do so.
\end{remark}

We close this subsection by recording the following lemma for later use:

\begin{lemma}\label{lemma:mackgl-products}
Let $\Cc$ be an $\infty$-category with finite products. Then $\ul\Mack_\gl(\Cc)$ has global products. Moreover, if $F\colon\Cc\to\Dd$ preserves finite products, then the postcomposition functor $\ul\Mack_\gl(F)\colon\ul\Mack_\gl(\Cc)\to\ul\Mack_\gl(\Dd)$ preserves global products.
\begin{proof}
	Arguing as in Example~\ref{ex:gl-glproducts}, this follows at once by observing that for every $f\colon G\to H$ in $\Fglo$ the adjunction
	\[
		(f^*)^\op\colon \Fglo_{/H}^\op\rightleftarrows \Fglo_{/G}^\op \noloc(f_!)^\op
	\]
	lifts to an adjunction $\Span_{\Forb}(\Fglo_{/H})\rightleftarrows\Span_{\Forb}(\Fglo_{/G})$ by \cite[Lemma~C.20]{BachmannHoyois2021Norms}.
\end{proof}	
\end{lemma}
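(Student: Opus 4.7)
The plan is to imitate Example~\ref{ex:gl-glproducts}, replacing the slice categories $\Fglo_{/G}$ everywhere by the span $\infty$-categories $\Span_{\Forb}(\Fglo_{/G})$. The only extra ingredient needed beyond the argument of that example is a mechanism to lift the slice adjunctions $f_!\dashv f^*$ in $\Fglo_{/-}$ to adjunctions of span categories, which is furnished by the hinted Lemma~C.20 of Bachmann--Hoyois.

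First, I would note that for $f\colon G\to H$ in $\Fglo$, the restriction $f^*$ on $\ul\Mack_\gl(\Cc)$ is by definition precomposition along $\Span_{\Forb}(f_!)$. By the cited lemma, $\Span_{\Forb}(f_!)$ acquires an adjoint partner given by (the span version of) the pullback $f^*$; both of these span functors preserve finite products since they come from functors preserving coproducts on the slice level, and the inclusion of an adequate triple into its span category is product-preserving. By 2-functoriality of $\Fun^\times(-,\Cc)$, we therefore obtain a right adjoint $f_*$ to the restriction $f^*$ on Mackey functors, given by precomposition along $\Span_{\Forb}(f^*)$.

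For the Beck--Chevalley condition, I would apply $\Span_{\Forb}(\Fglo_{/-})$ to a pullback square~$(\ref{diag:bc-pb})$ in $\Fglo$. The pasting law for pullbacks yields a canonical equivalence $h^*f_!\simeq f'_!g^*$ on the slice level, which persists on the span level by functoriality of $\Span_{\Forb}$; mate calculus relative to the span adjunctions from the previous paragraph then produces the dual Beck--Chevalley equivalence between the right adjoints of the restrictions. Transporting this along $\Fun^\times(-,\Cc)$ yields the required $g^*f_*\simeq f'_*h^*$ on Mackey functors, completing the proof that $\ul\Mack_\gl(\Cc)$ has global products.

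The functoriality statement is essentially formal: a product-preserving $F\colon\Cc\to\Dd$ commutes strictly with precomposition along $\Span_{\Forb}(f^*)$, hence with $f_*$, instantly yielding the Beck--Chevalley equivalence required of $\ul\Mack_\gl(F)$. I do not anticipate any genuine obstacle---once the span adjunction from Bachmann--Hoyois is taken as a black box, everything else is bookkeeping with mate calculus and 2-functoriality, which is the reason the author's own proof collapses to a single sentence.
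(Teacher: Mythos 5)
Your proposal is correct and takes the same approach as the paper, unfolding what the terse ``arguing as in Example~\ref{ex:gl-glproducts}'' hides: lift the slice adjunction $f_!\dashv f^*$ to span categories via the cited Bachmann--Hoyois lemma, push the resulting adjunction and its Beck--Chevalley data through $\Fun^\times(-,\Cc)$ by $2$-functoriality, and observe that the functoriality claim for $\ul\Mack_\gl(F)$ is immediate since postcomposition with $F$ commutes with precomposition along $\Span_{\Forb}(f^*)$. Everything you supply is implicit in the paper's one-sentence proof.
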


\begin{remark}\label{rk:Mackey-alternative}
	Recall that we defined the functoriality of $\ul\Mack_\gl$ by restriction along the functors $\Span(f_!)$. Thanks to $2$-functoriality of $\Fun(-,-)$, we can then equivalently describe this functoriality as the left Kan extension along the left adjoints $\Span(f^*)$ from the previous lemma, cf. \cite[Remark 4.1.10]{CHLL_NRings}. Moreover, by straightening--unstraightening, the global category $\Fglo_{/-}\colon\Fglo^\op\to\Cat_\infty$ is equivalent to the Borel category $\Fglo^\flat$, and under this equivalence the global subcategory $\Fglo_{/-}\times_{\Fglo}\Forb$ corresponds to $\Forb^\flat$, see \cite[Remark~8.2]{LLP}. As an upshot, we can equivalently describe $\ul\Mack_\gl(\Cc)$ by $\Fun^\times(\Span_{\Forb^\flat}(\Fglo^\flat),\Cc)$ with functoriality via left Kan extension.
\end{remark}

\subsection{The stable story} Our reference model of stable $G$-global homotopy theory is based on \emph{symmetric spectra} (in simplicial sets) in the sense of \cite{hss}:

\begin{construction}
	We call a map $f\colon X\to Y$ of symmetric spectra with $G$-action a \emph{$G$-global weak equivalence} if $\phi^*f\colon\phi^*X\to\phi^*Y$ is an $H$-stable equivalence of $H$-equivariant symmetric spectra in the sense of \cite[Definition~2.35]{hausmann-equivariant} for every finite group $H$ and every homomorphism $\phi\colon H\to G$.
\end{construction}

Essentially by definition, restriction along any homomorphism $G\to G'$ sends $G'$-global weak equivalences to $G$-global weak equivalences. As before, we therefore get a global $\infty$-category $\ul\mySp_\gl$ by localizing the global Borel category associated to the 1-category of symmetric spectra. This again has a universal property:

\begin{definition}
	A global $\infty$-category $\Cc$ is called \emph{equivariantly stable} if it is equivariantly semiadditive and moreover \emph{fiberwise stable} in the sense that it factors through the non-full subcategory $\CAT_\infty^\text{ex}\subset\CAT_\infty$ of stable $\infty$-categories and exact functors.
\end{definition}

\begin{theorem}[See {\cite[Theorem~7.3.2]{CLL_Global}}]
	The global $\infty$-category $\ul\mySp_\gl$ is presentable and equivariantly stable. Moreover, for any other such $\Dd$, evaluating at the global sphere spectrum $\mathbb S\in\mySp_\gl$ determines an equivalence
	\[
		\Fun_\gl^\textup{L}(\ul\mySp_\gl,\Dd)\iso\Dd(1).\qednow
	\]
\end{theorem}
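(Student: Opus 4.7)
The plan is to bootstrap this from the already-available universal property of $\ul\GammaS_\gl^\textup{spc}$ (Theorem~\ref{thm:pres-univ-prop-Gamma}), by exhibiting $\ul\mySp_\gl$ as the stabilization of the latter.

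First, I would verify that $\ul\mySp_\gl$ is presentable and equivariantly stable. Fiberwise stability and presentability of each $\mySp_G$ follow from the $G$-global stable model structure on symmetric spectra constructed in \cite{g-global}, and restriction along a homomorphism is a left Quillen functor for these structures, so at least fiberwise presentability and the existence of the relevant left adjoints hold. Existence of global coproducts together with the Beck--Chevalley conditions can be deduced either from explicit left Quillen lifts of the induction-type functors, or abstractly by transferring the structure from the unstable case along the suspension spectrum functor. Equivariant semiadditivity is the parametrized repackaging of the classical Wirthmüller/norm isomorphisms in $G$-global stable homotopy theory, and can be extracted from the relevant material in \cite{g-global}.

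Next, I would build a left adjoint global functor $\Sigma^\infty_+\colon\ul\GammaS_\gl^\textup{spc}\to\ul\mySp_\gl$ by deriving the pointset-level $G$-global delooping machine of \cite[Chapter~3]{g-global}. This is left Quillen level-wise and manifestly natural in $G$, so it passes to the global localizations. By Theorem~\ref{thm:pres-univ-prop-Gamma}, this left adjoint is characterised up to preferred equivalence by its value on $\mathbb P(1)\in\GammaS_\gl^\textup{spc}$, which by construction is the global sphere spectrum $\mathbb S$.

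The core of the argument is then to show that $\Sigma^\infty_+$ exhibits $\ul\mySp_\gl$ as the free presentable equivariantly \emph{stable} global $\infty$-category under $\ul\GammaS_\gl^\textup{spc}$, i.e.\ that for every presentable equivariantly stable $\Dd$ the restriction map
\[
    \Fun_\gl^\textup{L}(\ul\mySp_\gl,\Dd)\longrightarrow\Fun_\gl^\textup{L}(\ul\GammaS_\gl^\textup{spc},\Dd)
\]
is an equivalence. Combined with Theorem~\ref{thm:pres-univ-prop-Gamma} this instantly yields the desired universal property by evaluation at $\mathbb S\simeq\Sigma^\infty_+\mathbb P(1)$. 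To prove it, I would argue that $\Sigma^\infty_+$ is a parametrized Bousfield localization: its pointwise right adjoint $\Omega^\infty$ is fully faithful onto connective objects (essentially the $G$-global delooping theorem, applied naturally in $G$), and the additional relations being imposed by $\Sigma^\infty_+$ are precisely those forcing fiberwise stability. In any equivariantly stable target these relations are already satisfied, whence the desired equivalence.

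The main obstacle is the final step: one must check that inverting the `suspension' relations in $\ul\GammaS_\gl^\textup{spc}$ is correctly carried out parametrically, in particular that the representation spheres $S^V$ that govern genuine $G$-equivariant stability become invertible automatically in any presentable equivariantly stable global $\infty$-category. The clean way to handle this is to use that equivariant semiadditivity already provides the Wirthmüller/transfer structure, reducing the invertibility of each $S^V$ to invertibility of $S^1$ fiberwise, which is precisely fiberwise stability. Once this compatibility is in place, the localization argument goes through and the universal property drops out.
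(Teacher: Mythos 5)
The paper does not prove this theorem; it is imported as a black box from \cite[Theorem~7.3.2]{CLL_Global}, so there is no in-paper argument to compare against. Your overall strategy---reducing to the already established universal property of $\ul\GammaS_\gl^\textup{spc}$ (Theorem~\ref{thm:pres-univ-prop-Gamma}) via the delooping functor---is a sensible and natural route, and you correctly identify that the delicate point is making representation spheres invertible in any equivariantly semiadditive, fiberwise stable target. That observation is indeed the conceptual crux.

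However, there is a genuine gap in the core step. You claim that $\Sigma^\infty_+\colon\ul\GammaS_\gl^\textup{spc}\to\ul\mySp_\gl$ is a parametrized Bousfield localization, on the grounds that its right adjoint $\Omega^\infty$ is fully faithful ``onto connective objects.'' But $\Omega^\infty$ is \emph{not} fully faithful on all of $\ul\mySp_\gl$; the counit $\Sigma^\infty_+\Omega^\infty Y\to Y$ is only an equivalence for connective $Y$ (this is exactly what the cited \cite[Theorem~3.4.21]{g-global} says: a Bousfield localization \emph{onto connective spectra}, not onto all of $\ul\mySp_\gl$). Consequently your localization argument only yields
\[
	\Fun_\gl^\textup{L}\bigl(\ul\mySp_\gl^{\ge 0},\Dd\bigr)\iso\Fun_\gl^\textup{L}\bigl(\ul\GammaS_\gl^\textup{spc},\Dd\bigr)\iso\Dd(1),
\]
and you still owe a separate argument that restriction along $\ul\mySp_\gl^{\ge 0}\hookrightarrow\ul\mySp_\gl$ induces an equivalence on left adjoint functors into a fiberwise stable $\Dd$. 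This is the usual ``stabilization extends from the connective part'' step (essentially $\Sp\simeq\Sp^{\ge0}\otimes\Sp$ fiberwise), which is not automatic and is where fiberwise stability of $\Dd$ is really being used. Relatedly, saying that the ``relations imposed by $\Sigma^\infty_+$ are precisely those forcing fiberwise stability'' conflates two distinct steps: group completion (what the localization onto connective spectra inverts) and inverting suspension (what the passage from connective to all spectra supplies). Both need to be addressed, and each uses a different facet of fiberwise stability of the target. Finally, the assertion that equivariant semiadditivity reduces invertibility of each $S^V$ to invertibility of $S^1$ is correct in spirit, but it is a nontrivial theorem of parametrized stability theory rather than a one-line reduction, and your write-up presents it as routine.
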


\begin{variant}
	Instead of localizing symmetric $G$-spectra at the \emph{$G$-global} weak equivalences, we can also localize them at the \emph{$G$-equivariant} weak equivalences (the \emph{$G$-stable weak equivalences} in Hausmann's terminology), yielding an $\infty$-category $\mySp_G$ of \emph{(genuine) $G$-equivariant spectra}. As the $G$-global weak equivalences are by definition finer than the $G$-equivariant ones, the identity functor derives to a localization $\fgt\colon\mySp_\text{$G$-gl}\to\mySp_G$.
	
	In the equivariant setting, the restriction functors are no longer homotopical, but they can be \emph{left} derived to produce a global $\infty$-category $\ul\mySp$, see \cite[Subsection~9.1]{CLL_Clefts} for a precise definition. The forgetful functors then have fully faithful left adjoints assembling into a global functor $\ul\mySp\hookrightarrow\ul\mySp_\gl$ sending the usual sphere to the global sphere, see Lemma~9.12 of \emph{op.\ cit.} Beware, however, that the forgetful functors on the other hand can \emph{not} be assembled into a global functor. 
\end{variant}

We can also give a Mackey functor description of global stable homotopy theory:

\begin{theorem}\label{thm:spectra-vs-spectral-Mackey}
	The global $\infty$-category $\ul\Mack_\gl(\Sp)$ is presentable and equivariantly stable. Moreover, the unique left adjoint $\ul\mySp_\gl\to\ul\Mack_\gl(\Sp)$ sending the global sphere $\mathbb S$ to the composite
	\[
		\Span_{\Forb}(\Fglo)\xrightarrow{\;\hom(1,-)\;}\CMon\xrightarrow{\;\ell\;}\Sp
	\]
	is an equivalence, where $\ell\colon\CMon\to\Sp$ denotes the delooping functor, i.e.~the unique left adjoint sending the free $E_\infty$-monoid to the sphere spectrum.
	\begin{proof}
		This follows immediately by combining the previous theorem with \cite[Corollary~9.17]{CLL_Spans}.
	\end{proof}
\end{theorem}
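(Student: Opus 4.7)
The plan is to verify that $\ul\Mack_\gl(\Sp)$ satisfies the same universal property that the previous theorem establishes for $\ul\mySp_\gl$, so that both are initial among presentable equivariantly stable global $\infty$-categories under their respective distinguished generator; the unique left adjoint matching these generators will then automatically be an equivalence.

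First I would verify that $\ul\Mack_\gl(\Sp)$ is presentable and equivariantly stable. Fiberwise presentability and stability are inherited from $\Sp$, since $\Fun^\times(\Span_\Forb(\Fglo_{/G}),\Sp)$ is a limit-closed subcategory of the presentable stable functor category $\Fun(\Span_\Forb(\Fglo_{/G}),\Sp)$. Global coproducts are produced by left Kan extension along the maps $\Span(f^*)$ appearing in Lemma~\ref{lemma:mackgl-products} (using the reformulation from Remark~\ref{rk:Mackey-alternative}), with the basechange condition reducing to the pasting law for pullbacks exactly as in Example~\ref{ex:gl-glproducts}. For equivariant semiadditivity, one can either stabilize the semiadditive structure on $\ul\Mack_\gl(\Spc)\simeq\ul\Mack_\gl(\CMon)$ established via Theorem~\ref{thm:canonical-equiv-cmon}, or argue more directly that the semiadditivity of each $\Span_\Forb(\Fglo_{/G})$ propagates to equivariant semiadditivity of the product-preserving functor categories into any $\infty$-category with finite products.

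The crucial input is then \cite[Corollary~9.17]{CLL_Spans}, which is the stable analogue of the Corollary~9.14 used to prove Theorem~\ref{thm:canonical-equiv-cmon}: it asserts that $\ul\Mack_\gl(\Sp)$ is the free presentable equivariantly stable global $\infty$-category on one generator, with generator given precisely by $\ell\circ\hom(1,-)$. Combined with the universal property of $\ul\mySp_\gl$ recalled in the previous theorem, both sides are now initial presentable equivariantly stable global $\infty$-categories under their distinguished generators. The unique left adjoint $\ul\mySp_\gl\to\ul\Mack_\gl(\Sp)$ sending $\mathbb S$ to $\ell\circ\hom(1,-)$ produced by the universal property of $\ul\mySp_\gl$ is therefore an equivalence, with inverse furnished symmetrically by the universal property of $\ul\Mack_\gl(\Sp)$.

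The main obstacle is establishing the universal property of $\ul\Mack_\gl(\Sp)$, which is imported as a black box from \cite{CLL_Spans}; presumably its proof proceeds by promoting the universal property of $\ul\Mack_\gl(\Spc)$ through the classical description of $\Sp$ as the stabilization of $\CMon$, while carefully tracking the image of the generator. Granting that input, the remainder of the argument is purely formal: matching two universal properties and identifying the generators on the two sides.
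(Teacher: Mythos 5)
Your proposal is correct and follows essentially the same route as the paper: the paper's proof is precisely the one-line observation that Corollary~9.17 of \cite{CLL_Spans} gives $\ul\Mack_\gl(\Sp)$ the same universal property (as the free presentable equivariantly stable global $\infty$-category on the distinguished generator $\ell\circ\hom(1,-)$) that the preceding theorem gives $\ul\mySp_\gl$ on $\mathbb S$, so the generator-preserving left adjoint is an equivalence. The extra verifications you sketch (fiberwise presentability/stability, global coproducts, equivariant semiadditivity) are all subsumed in the cited corollary and need not be re-derived.
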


\begin{variant}
	Recall from Remark~\ref{rk:Mackey-alternative} that we can equivalently define $\ul\Mack_\gl(\Sp)$ via $\ul\Mack_\gl(\Sp)(G)\coloneqq\Fun^\times(\Span_{\Forb^\flat(G)}(\Fglo^\flat(G)),\Sp)$, with functoriality via left Kan extension. Analogously, we obtain a global category $\ul\Mack(\Sp)\coloneqq\Fun^\times(\Span(\mathbb F^\flat),\Sp)$ sending a finite group $G$ to the $\infty$-category $\Fun^\times(\Span(\mathbb F_G),\Sp)$ of \emph{$G$-equivariant spectral Mackey functors}.
\end{variant}

\begin{theorem}[See {\cite[Theorem~8.3]{LLP}}]\label{thm:spectra-gl-vs-equiv-vs-Mackey}
	\begin{enumerate}
		\item There exists a unique equivalence $\ul\mySp\iso\ul\Mack(\Sp)$ sending the sphere spectrum to the composite $\ell\circ\hom(1,-)\colon\Span(\mathbb F)\to\CMon\to\Sp$.
		\item Left Kan extension along $\mathbb F\hookrightarrow\Fglo$ defines a fully faithful functor $\ul\Mack(\Sp)\hookrightarrow\ul\Mack_\gl(\Sp)$, and the diagram
		\[
			\begin{tikzcd}
				\ul\mySp\arrow[r,hook]\arrow[d,"\sim"'] & \ul\mySp_\gl\arrow[d,"\sim"]\\
				\ul\Mack(\Sp)\arrow[r,hook] & \ul\Mack_\gl(\Sp)
			\end{tikzcd}
		\]
		commutes up to preferred equivalence, where the equivalence on the right is the one from Theorem~\ref{thm:spectra-vs-spectral-Mackey}.\qed
	\end{enumerate}
\end{theorem}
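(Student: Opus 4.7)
The plan is to address both parts in tandem: first construct the fully faithful functor in part~(2) directly at the level of Mackey functors, then identify it with the inclusion $\ul\mySp\hookrightarrow\ul\mySp_\gl$ from the Variant above via the equivalence of Theorem~\ref{thm:spectra-vs-spectral-Mackey}, and finally read off part~(1) as the resulting equivalence of essential images.

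Using the Borel description from Remark~\ref{rk:Mackey-alternative}, both $\ul\Mack(\Sp)$ and $\ul\Mack_\gl(\Sp)$ are of the form $\Fun^\times(\Span(-),\Sp)$ applied respectively to the global adequate triples $(\mathbb F^\flat,\mathbb F^\flat,\mathbb F^\flat)$ and $(\Fglo^\flat,\Fglo^\flat,\Forb^\flat)$, with functoriality by left Kan extension. Since $\mathbb F\hookrightarrow\Fglo$ is fully faithful, preserves finite products, and lands inside $\Forb$ (any functor between discrete groupoids being trivially faithful), it induces a morphism of global adequate triples and hence a fully faithful inclusion $\Span(\mathbb F^\flat)\hookrightarrow\Span_{\Forb^\flat}(\Fglo^\flat)$. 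Left Kan extension along this gives the desired global functor, and fully faithfulness on Mackey functors follows from fully faithfulness of left Kan extension for product-preserving functors between semiadditive $\infty$-categories with finite product target.

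To establish commutativity of the square in part~(2), I would argue by a universal property chase. The two composites $\ul\mySp\hookrightarrow\ul\mySp_\gl\iso\ul\Mack_\gl(\Sp)$ and $\ul\mySp\iso\ul\Mack(\Sp)\hookrightarrow\ul\Mack_\gl(\Sp)$ (invoking part~(1) provisionally) are left adjoint global functors between presentable equivariantly stable global $\infty$-categories, so by the universal property of $\ul\mySp$ (as the free such category on the equivariant sphere, from \cite{CLL_Clefts}) it suffices to check that they agree on $\mathbb S$. The upper composite sends $\mathbb S$ to $\ell\circ\hom(1,-)$ on $\Span_{\Forb}(\Fglo)$ by Theorem~\ref{thm:spectra-vs-spectral-Mackey} together with the fact (recalled in the Variant above) that the equivariant sphere maps to the global sphere under $\ul\mySp\hookrightarrow\ul\mySp_\gl$; the lower composite sends $\mathbb S$ to the left Kan extension of $\ell\circ\hom(1,-)\colon\Span(\mathbb F)\to\Sp$, which is again $\ell\circ\hom(1,-)$ on $\Span_{\Forb}(\Fglo)$ by product preservation and fully faithfulness.

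Part~(1) then falls out by matching essential images across the equivalence of Theorem~\ref{thm:spectra-vs-spectral-Mackey}, with uniqueness once more a consequence of the universal property of $\ul\mySp$. The main obstacle I anticipate lies in the parametrized bookkeeping: assembling the pointwise left Kan extensions into a genuine global \emph{left adjoint} functor (so that the dual of Lemma~\ref{lemma:adjunctions-criterion} applies and the reduction to checking on $\mathbb S$ is justified) requires verifying a Beck--Chevalley compatibility between left Kan extension along $\Span(\mathbb F_G)\hookrightarrow\Span_{\Forb}(\Fglo_{/G})$ and the restriction functoriality indexed by morphisms of $\Fglo$. Once that is set up cleanly, the remaining matching with the spectral side is essentially a formal universal property argument.
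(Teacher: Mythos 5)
The paper does not actually prove this theorem: the \texttt{\textbackslash qed} in the statement signals that it is taken as a blackbox from \cite{LLP}. So your attempt is by construction a different route, and the right question is whether the sketch is sound on its own terms.

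Your overall plan is sensible, and several pieces are on the right track. The inclusion $\mathbb F\hookrightarrow\Fglo$ (discrete groupoids into all finite groupoids) indeed lands in $\Forb$, preserves pullbacks, and gives a fully faithful map $\Span(\mathbb F_G)\hookrightarrow\Span_{\Forb}(\Fglo_{/G})$, since a faithful map of groupoids with discrete target must itself have discrete source. Your flagged worry about the Beck--Chevalley compatibility needed to assemble the pointwise Kan extensions into a global left adjoint is also the right thing to worry about. One imprecision: you invoke ``fully faithfulness of left Kan extension for product-preserving functors between semiadditive $\infty$-categories'' as if it were a general theorem, which it isn't as stated; the inclusion $\Fun^\times(\Span(\mathbb F_G),\Sp)\hookrightarrow\Fun^\times(\Span_{\Forb}(\Fglo_{/G}),\Sp)$ is fully faithful, but the argument has to be run---e.g.\ checking the unit of the $j_!\dashv j^*$ adjunction is invertible on the corepresentable generators $\Sigma^\infty_+\hom(a,-)$ using full faithfulness of $j$, then propagating via colimit-preservation of $j^*j_!$. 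You should spell this out rather than appeal to a general principle.

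The genuine gap, however, is in the step you summarize as ``Part~(1) then falls out by matching essential images.'' What you actually produce by the universal-property chase (even once the left-adjoint bookkeeping is straightened out) is a fully faithful comparison $\ul\mySp\to\ul\Mack(\Sp)$; full faithfulness comes for free from the other three sides of the square. But essential surjectivity does not fall out of anything you've written. You would need to show that the image of the sphere generates $\ul\Mack(\Sp)$ under the parametrized-presentable equivariantly stable structure---concretely, that the corepresentable Mackey functors $\Sigma^\infty_+\hom_{\Span(\mathbb F_G)}(G/H,-)$ arise from the sphere via equivariant inductions $f_!$ for faithful $f$---or, equivalently, establish that $\ul\Mack(\Sp)$ itself enjoys the freeness universal property of $\ul\mySp$. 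This is a real piece of content (it is essentially what \cite{LLP} proves), not a formal consequence of fully faithfulness plus Yoneda, and as written the proposal does not close it.
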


\subsection{Global algebraic $\bm{K}$-theory}
In this subsection, we will recall the definition of global algebraic $K$-theory from \cite{schwede-k} together with its $G$-global generalization \cite{g-global}. Before we come to this, however, let us recall the classical non-equivariant approach to the (group completion) $K$-theory of symmetric monoidal 1-categories:

\begin{construction}\label{constr:symmoncat-comp}
	In \cite[Definition~2.1]{shimada-shimakawa}, Shimada and Shimakawa (building on earlier work of Segal and May) constructed a functor $\Gamma\colon\cato{SymMonCat}^0\to\cato{$\bm\Gamma$-Cat}^\text{spc}_*$ from the 1-category of small symmetric monoidal 1-categories and \emph{strictly unital} strong symmetric functors to the 1-category of \emph{special $\varGamma$-categories}, i.e.~those reduced functors $\Gamma\to\textbf{Cat}$ into the 1-category of small 1-categories such that the Segal maps are {equivalences of categories}.

	We will only need to know the following facts about this construction:
	\begin{enumerate}
		\item The composite $\ev_{1^+}\circ\Gamma\colon\cato{SymMonCat}^0\to\cato{Cat}$ is isomorphic to the functor forgetting the symmetric monoidal structure; in particular, $\Gamma$ sends equivalences to levelwise equivalences. For ease of notation, we will pretend below that $\ev_{1^+}\circ\Gamma$ is actually \emph{equal} to the forgetful functor (this could for example be arranged by replacing $\Gamma$ by an isomorphic functor).
		\item Write $\cato{PermCat}$ for the 1-category of small \emph{permutative categories} (symmetric monoidal categories in which associativity and unitality hold strictly) and \emph{strict} symmetric monoidal functors. Then the homotopical functor $\Gamma$ descends to an equivalence of $\infty$-categories 
		\[
			\qquad\qquad\cato{PermCat}[\textup{equivalences}^{-1}]\iso\cato{$\bm\Gamma$-Cat}^\text{spc}_*[\textup{level equivalences}^{-1}],
		\]
		see \cite[Corollary~6.19]{sharma}. By \cite[Theorem~1.19]{ps-cat} and \cite[Proposition~6.7]{ps-global}, the inclusions $\cato{PermCat}\hookrightarrow\cato{SymMonCat}^0\hookrightarrow\cato{SymMonCat}$ each admit homotopy inverses; in particular, we see that $\Gamma$ uniquely extends to a functor
		\[
			\qquad\qquad\Gamma\colon\cato{SymMonCat}\to\cato{$\bm\Gamma$-Cat}^\text{spc}_*[\textup{level equivalences}^{-1}]
		\]
		that is a localization at the underlying equivalences of categories.

		Postcomposing with the nerve, this allows us to identify the localization $\cato{SymMonCat}[\text{equivalences}^{-1}]$ with the full subcategory $\SymMonCat_1\subset\SymMonCat_\infty=\CMon(\Cat_\infty)\subset\Fun(\Gamma,\Cat_\infty)$ spanned by those symmetric monoidal $\infty$-categories whose underlying category is a 1-category.
	\end{enumerate}

	Note that by the universal property of commutative monoids \cite[Corollary~2.5]{groth-gepner-nikolaus}, these two properties actually pin down the functor $\Gamma$ uniquely on the level of $\infty$-categories.
\end{construction}

One of the classical constructions of (group completion) algebraic $K$-theory is then given by the composite
\[
	\mathbb K\colon\cato{SymMonCat}^0\xrightarrow{\;\Gamma\;}\cato{$\bm\Gamma$-Cat}_*^\textup{spc}\xrightarrow{\;N\circ-\;}\cato{$\bm\Gamma$-SSet}_*^\textup{spc}\xrightarrow{\;\textup{deloop}\;}\cato{Spectra},
\]
where $\text{deloop}$ denotes any pointset model of the delooping functor $\ell\colon\CMon\to\Sp$, e.g.\ the one from \cite[Section~5]{bousfield-friedlander}. With our pointset models of $G$-global homotopy theory at hand, the construction of \emph{$G$-global algebraic $K$-theory} is then a rather direct adaptation of this:

\begin{construction}
	Fix a finite group $G$. We write $\Gamma_\gl$ for the composite
	\[
		\hskip-7.54pt\hfuzz=7.55pt
		\cato{$\bm G$-SymMonCat}^0\xrightarrow{\,\Gamma\,} \cato{$\bm\Gamma$-$\bm G$-Cat}_*\xrightarrow{\Fun(E\mathcal M,-)\,}\cato{$\bm\Gamma$-$\bm{E\mathcal M}$-$\bm G$-Cat}_*\xrightarrow{\,N\,}\cato{$\bm\Gamma$-$\bm{E\mathcal M}$-$\bm G$-SSet}_*
	\]
	where $E\mathcal M$ acts on itself from the right in the obvious way. By \cite[Example~2.2.52]{g-global}, this actually lands in the full subcategory of \emph{special} $G$-global $\Gamma$-spaces; as it is moreover strictly natural in $G$ by construction, varying $G$ yields a global functor $(\cato{SymMonCat}^0)^\flat\to \ul\GammaS_\gl^\text{spc}$, with a unique extension $\cato{SymMonCat}^\flat\to\ul\GammaS_\gl^\text{spc}$. We will denote both of these global functors by $\Gamma_\gl$ again.
\end{construction}

To complete the construction, we need the appropriate global version of the delooping functor:

\begin{construction}\label{constr:deloop-gl}
	We write $\cato{$\bm\Gamma$-$\bm G$-$\bm{\mathcal I}$-SSet}$ for the category of reduced functors $\Gamma\to\cato{$\bm G$-$\bm{\mathcal I}$-SSet}$. This comes with similar notions of \emph{$G$-global level weak equivalences} and \emph{specialness} \cite[Theorem~2.2.31 and Definition~2.2.50]{g-global}, both of which we will treat as a blackbox. By Corollary~2.2.53 of \emph{op.\ cit.}, there exists an (explicit) functor denoted $(-)[\omega^\bullet]\colon\cato{$\bm\Gamma$-$\bm{E\mathcal M}$-SSet}\to\cato{$\bm\Gamma$-$\bm{\mathcal I}$-SSet}$ inducing for any finite $G$ an equivalence
	\[
		\cato{$\bm\Gamma$-$\bm{E\mathcal M}$-$\bm G$-SSet}[\Ww_\text{$G$-gl}^{-1}]\iso\cato{$\bm\Gamma$-$\bm G$-$\bm{\mathcal I}$-SSet}[\Ww_\text{$G$-gl}^{-1}]
	\]
	and hence an equivalence $\ul\GammaS_\gl^\text{spc}\iso \ul\GammaS_{\gl,\Ii}^\text{spc}$ of the corresponding global $\infty$-categories.

	On the other hand, \cite[Construction~3.4.13]{g-global} provides a homotopical functor $\Ee^\otimes\colon\cato{$\bm\Gamma$-$\bm G$-$\bm{\mathcal I}$-SSet}\to\cato{$\bm G$-Spectra}$ to the 1-category of $G$-symmetric spectra, and for varying $G$ these assemble into a global left adjoint $\Ee^\otimes\colon\ul\GammaS_{\gl,\Ii}^\text{spc}\to\ul\mySp_\gl$. We write $\text{deloop}$ for the composite left adjoint\footnote{The corresponding pointset level functor is denoted $(-)\langle\mathbb S\rangle$ in \cite[Construction~4.1.6]{g-global}.}
	\[
		\ul\GammaS_\gl^\text{spc}\iso\ul\GammaS_{\gl,\Ii}^\text{spc}\xrightarrow{\;\Ee^\otimes\;}\ul\mySp_\gl.
	\] 
	Thanks to the universal property of the source, this functor can also simply be described as the unique left adjoint sending $\mathbb P(1)$ to the global sphere, see \cite[proof of Theorem~7.3.1]{CLL_Global}.
\end{construction}

We remark purely for motivational purposes that by \cite[Theorem~3.4.21]{g-global}, $\Ee^\otimes$ defines a Bousfield localization onto a suitable subcategory of \emph{connective $G$-global spectra}, as one would expect from a delooping functor.

\begin{definition}[see {\cite[Definition~4.1.31]{g-global}}]
	The \emph{($G$-)global algebraic $K$-theory functor} is the composite
	\begin{equation}\label{eq:Kggl}
		\mathbb K_\gl\colon \cato{SymMonCat}^\flat\xrightarrow{\;\Gamma_\gl\;}\ul\GammaS_\gl^\text{spc}\xrightarrow{\;\text{deloop}\;}\ul\mySp_\gl.
	\end{equation}
\end{definition}

\begin{remark}
	Specializing to $G=1$, the resulting functor $\cato{SymMonCat}\to\mySp_\gl$ agrees with the construction of \emph{global algebraic $K$-theory} from \cite{schwede-k}, see \cite[Remark~4.1.26 and Theorem~4.1.33]{g-global}. On the other hand, as we will explain in more detail in the final section of this paper, the functor $\cato{$\bm G$-SymMonCat}\to\mySp_\text{$G$-gl}$ for general $G$ provides a refinement of the equivariant algebraic $K$-theory constructions of \cite{guillou-may} and \cite{merling}. As promised in the introduction, this will allow us to prove both Theorems~\ref{introthm:equivariant} and~\ref{introthm:global} by studying the global functor $(\ref{eq:Kggl})$.
\end{remark}

\section{The non-group-completed comparison}\label{sec:heart}
\subsection{Swan $\bm K$-theory} In this section we will prove the key input for our main results by providing an alternative description of the composite 
\[
	(\cato{SymMonCat})^\flat\xrightarrow{\;\Gamma_\gl\;}\ul\GammaS_\gl^\text{spc}\iso\ul\Mack_\gl(\Spc)
\]
in terms of a monoidal (and parametrized) version of the Borel construction from Example~\ref{ex:Borel}, cf.\ \cite{puetzstueck-new}. As the first step, note that on the left hand side we still use the \emph{1-category} of small symmetric monoidal 1-categories, and we should hurry to replace this by a homotopy invariant notion:

\begin{remark}
	Recall from Construction~\ref{constr:symmoncat-comp} the localization $\gamma\colon\cato{PermCat}\to\cat{SymMonCat}_1$ (given by the composite $N\circ\Gamma$) and its extension to a localization $\gamma\colon\cato{SymMonCat}\to\cat{SymMonCat}_1$, obtained by precomposing with a homotopy inverse to the inclusion $\cato{PermCat}\hookrightarrow\cato{SymMonCat}$.

	By \cite[Theorem~3.1]{sharma}, the underlying equivalences of categories are part of a combinatorial simplicial model structure on $\cato{PermCat}$; \cite[Proposition~4.2.4.4]{HTT} thus shows that $\Fun(I,\gamma)\colon\Fun(I,\cato{PermCat})\to\Fun(I,\cat{SymMonCat}_1)$ is a localization for any small category $I$. On the other hand, any choice of homotopy inverse $\cato{SymMonCat}\to\cato{PermCat}$ induces a homotopy inverse to the inclusion $\Fun(I,\cato{PermCat})\hookrightarrow\Fun(I,\cato{SymMonCat})$, so we altogether conclude that $\gamma^\flat\colon\cato{SymMonCat}^\flat\to\cat{SymMonCat}_1^\flat$ is given at each $G\in\Fglo$ by a localization at the underlying equivalences of categories.
\end{remark}

Having made this translation, we are now in familiar $\infty$-categorical territory. The next step will be to construct a global functor $\cat{SymMonCat}_1^\flat\to\ul\Mack_\gl(\Cat_1)$ given informally by sending a symmetric monoidal $1$-category with (homotopy coherent) $G$-action $\Cc$ to the functor $\Span_{\Forb}(\Fglo_{/G})\to\Cat_1$ given at a morphism $\phi\colon H\to G$ of $\Glo$ by the homotopy fixed points $\Cc^{h\phi}\coloneqq(\phi^*\Cc)^{hH}$. To do so, we begin by relating the Borel construction to its more genuine counterpart from Example~\ref{ex:sub-gl}.

\begin{construction}
	\label{constr:lowercase-s}
	Write $i$ for the inclusion $\Fglo\hookrightarrow\Cat$. Equivalently, this is the corepresented functor $\hom_{\Fglo}(1,-)$; thus, the Yoneda lemma shows that there exists a unique natural transformation $s\colon i\to(\Fglo_{/-})^\op$ such that $s_1\colon 1=i(1)\to\Fglo$ is the functor picking out the terminal object $1\in\Fglo$.
\end{construction}

\begin{lemma}
	Let $G$ be any finite groupoid. Then $s_G\colon G\to(\Fglo_{/G})^\op$ is fully faithful, and its essential image consists precisely of the objects of the form $1\to G$.
	\begin{proof}
		The cocartesian unstraightening of $i=\hom(1,-)$ is given by the forgetful functor $\Fglo_{1/}\to\Fglo$. On the other hand, the cocartesian unstraightening of $\Fglo_{/-}$ is given by the target map $\ev_1\colon\Ar(\Fglo)\to\Fglo$, with cocartesian edges given by those maps inverted by the other evaluation functor $\ev_0$. We therefore obtain a natural transformation $i\to\Fglo_{/-}$ as the straightening of the inclusion $\Fglo_{1/}\hookrightarrow\Ar(\Fglo)$; this is clearly fully faithful with essential image as described above. Applying $(-)^\op$ to both sides and using the natural identification $G\simeq G^\op$ for every $G\in\Fglo$ then yields a natural transformation $i\to(\Fglo_{/-})^\op$ given in degree $1$ by picking out $1\in\Fglo$. As the latter property characterizes $s$ completely, this finishes the proof.
	\end{proof}
\end{lemma}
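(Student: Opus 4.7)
The plan is to realize the natural transformation $s$ concretely using straightening/unstraightening, where a fully faithful inclusion of (co)Cartesian fibrations on the source will straighten to a fully faithful natural transformation with the desired essential image. Once that concrete presentation is in hand, identifying the result with $s$ is just a Yoneda-style observation.

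More concretely, I would first observe that $i=\hom_\Fglo(1,-)\colon\Fglo\to\Cat$ is the cocartesian straightening of the forgetful functor $\Fglo_{1/}\to\Fglo$ (this is the standard unstraightening of a corepresented functor), while $\Fglo_{/-}$ is the cocartesian straightening of the target map $\ev_1\colon\Ar(\Fglo)\to\Fglo$, with cocartesian edges those squares inverted by $\ev_0$. There is a tautological fiberwise inclusion $\Fglo_{1/}\hookrightarrow\Ar(\Fglo)$ over $\Fglo$ sending an object $1\to G$ to itself viewed as an arrow; this preserves cocartesian edges, is fully faithful on each fiber, and has essential image on the fiber over $G$ precisely the arrows of the form $1\to G$. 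Straightening, I obtain a natural transformation $i\to\Fglo_{/-}$ whose component at each $G$ is fully faithful with essential image the objects $1\to G$.

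Next, postcomposition with $(-)^\op\colon\Cat\to\Cat$ yields a natural transformation $i^\op\to(\Fglo_{/-})^\op$ with the same fiberwise fully faithfulness and essential image properties. Now $i$ factors through $\Grpd\subset\Cat$, and for a groupoid there is a canonical equivalence $G\simeq G^\op$ (e.g.~via the inversion map); naturally in $G\in\Fglo$ this yields an identification $i\simeq i^\op$, and hence a natural transformation $i\to(\Fglo_{/-})^\op$ with the required fiberwise properties. By inspection, the component at $G=1$ picks out the object $\id_1\colon 1\to 1$, i.e.~the terminal object of $\Fglo$.

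Finally, the unique characterization of $s$ from Construction~\ref{constr:lowercase-s}, which identifies any natural transformation $i\to(\Fglo_{/-})^\op$ with $s$ once its component at $1$ has been pinned down to pick out $1\in\Fglo$, forces the constructed transformation to agree with $s$. The conclusions on $s_G$ then follow from the corresponding properties of the transformation constructed above. The only point requiring care is that the identification $G\simeq G^\op$ is natural on $\Fglo$ (not just pointwise), but for groupoids this is standard, so I do not expect any genuine obstacle here.
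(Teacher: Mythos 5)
The proposal is correct and follows exactly the same route as the paper: unstraighten $i$ and $\Fglo_{/-}$ to $\Fglo_{1/}\to\Fglo$ and $\ev_1\colon\Ar(\Fglo)\to\Fglo$, straighten the tautological inclusion $\Fglo_{1/}\hookrightarrow\Ar(\Fglo)$, apply $(-)^\op$ together with the natural equivalence $G\simeq G^\op$ for groupoids, and then invoke the Yoneda characterization of $s$ to conclude. The only differences are expository (you spell out cocartesian-edge preservation and flag the naturality of $G\simeq G^\op$, which the paper leaves implicit).
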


Restriction along $s$ then gives a global functor $s^*\colon\ul\Cc_\gl\to\Cc^\flat$ for any $\infty$-category $\Cc$. If $\Cc$ is complete, then each $\ul\Cc_\gl(G)\to\Cc^\flat(G)$ has a right adjoint $s_*$ given by right Kan extension along $s$; for example, for $G=1$ and $\Cc=\Cat_\infty$, this precisely recovers the functor $(-)^\flat\colon\Cat_\infty\to\Cat_{\gl,\infty}$. These right adjoints actually assemble into a global functor:

\begin{lemma}\label{lemma:s-lc-ra}
	Let $\Cc$ be a complete $\infty$-category. Then the global functor $s^*\colon\ul\Cc_\gl\to\Cc^\flat$ admits a right adjoint $s_*$.
	\begin{proof}
		Let $f\colon G\to H$ be any map in $\Fglo$; by the pointwise criterion from Lemma~\ref{lemma:adjunctions-criterion} we have to show that the Beck--Chevalley map $f^*s_*\to s_*f^*$ associated to the naturality square
		\begin{equation}\label{diag:BC-pb-spc}
			\begin{tikzcd}
				G\arrow[d,"f"'] \arrow[r,"s_G"] & \Fglo_{/G}^\op\arrow[d,"\Fglo_{/f}^\op"]\\
				H\arrow[r, "s_H"'] & \Fglo_{/H}^\op
			\end{tikzcd}
		\end{equation}
		is invertible. For this we observe that $\Fglo_{/G}\to\Fglo_{/H}$ is a right fibration; the claim will therefore follow from smooth basechange \cite[Theorem~6.4.13]{cisinski-book} once we show that $(\ref{diag:BC-pb-spc})$ is a pullback. As both horizontal functors are fully faithful by the previous lemma, this amounts to saying that an object $(X\to G)\in\Fglo_{/G}^\op$ is contained in the essential image of $s_G$ if and only if its image in $\Fglo_{/H}^\op$ is contained in the essential image of $s_H$. This is clear since, by another application of the lemma, both of these conditions are equivalent to demanding $X\simeq1$.
	\end{proof}
\end{lemma}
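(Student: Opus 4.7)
The plan is to verify the pointwise criterion for parametrized adjunctions from Lemma~\ref{lemma:adjunctions-criterion}. On each fiber, we obtain $s_G^*\colon\Fun(\Fglo_{/G}^\op,\Cc)\to\Fun(G,\Cc)$, which admits a right adjoint $s_{G,*}$ given by right Kan extension along $s_G$; this exists because $\Cc$ is assumed complete and both $G$ and $\Fglo_{/G}^\op$ are small.

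The real content is then the Beck--Chevalley condition: for each $f\colon G\to H$ in $\Fglo$ the natural transformation $f^*s_{H,*}\to s_{G,*}(\Fglo_{/f}^\op)^*$ associated to the naturality square
\[
\begin{tikzcd}
G \arrow[d,"f"'] \arrow[r,"s_G"] & \Fglo_{/G}^\op \arrow[d,"\Fglo_{/f}^\op"]\\
H \arrow[r,"s_H"'] & \Fglo_{/H}^\op
\end{tikzcd}
\]
must be an equivalence. The natural strategy is to split this into two inputs: show that the square is a pullback of $\infty$-categories, and then invoke a base-change theorem (Cisinski's Theorem~6.4.13) for right Kan extensions along the right-hand vertical. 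The crucial fibrational input is that the postcomposition functor $\Fglo_{/f}\colon\Fglo_{/G}\to\Fglo_{/H}$ is a map over $\Fglo$ between right fibrations (slice projections), hence itself a right fibration; its opposite is then the sort of map to which the cited base-change theorem applies.

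For the pullback claim, the preceding lemma does the work: both $s_G$ and $s_H$ are fully faithful with essential images spanned by the slices $1\to G$ and $1\to H$ respectively. Fully faithfulness is inherited by pullback squares, so what remains is matching essential images, and this is immediate since an object $X\to G$ lies in the image of $s_G$ iff $X\simeq 1$, which is visibly equivalent to its postcomposite $X\to H$ lying in the image of $s_H$. Combining these two inputs yields the Beck--Chevalley equivalence and thus the parametrized right adjoint $s_*$. The only step where one has to be attentive is reconciling conventions around smooth versus proper base change when passing from the right fibration $\Fglo_{/f}$ to its opposite in order to apply the cited result, but this is essentially bookkeeping.
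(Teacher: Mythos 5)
Your proposal is correct and matches the paper's own proof essentially step for step: pointwise right adjoints exist by right Kan extension since $\Cc$ is complete, the Beck--Chevalley square is identified as a pullback by fully faithfulness of $s_G,s_H$ together with matching essential images (both given by objects $X\to G$ with $X\simeq 1$), and the vertical map is a right fibration so (after passing to opposites) Cisinski's smooth base change applies. Your explicit remark about reconciling the right-fibration statement with the opposite categories appearing in the actual square is a welcome clarification of a point the paper leaves implicit.
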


Our alternative description of ($G$-)global algebraic $K$-theory relies on a monoidal refinement $\SymMonCat_\infty^\flat\to\ul\Mack_\gl(\Cat_\infty)$ of this right adjoint:

\begin{construction}\label{constr:monoidal-borel}
	Composing the fully faithful natural transformation $s$ from Construction~\ref{constr:lowercase-s} with the inclusion $(\Fglo_{/-})^\op\hookrightarrow\Span_{\Forb}(\Fglo_{/-})$, we obtain a natural transformation $i\to \Span_{\Forb}(\Fglo_{/-})$. As $\Span_{\Forb}(\Fglo_{/G})$ is semiadditive for every $G\in\Fglo$, this then extends uniquely to a natural transformation
	\[
		S\colon \Span(\mathbb F)\times i(-)\to\Span_{\Forb}(\Fglo_{/-})
	\]
	of functors $\Fglo\to\Cat$, such that each each individual functor $\Span(\mathbb F)\times G\to\Span_{\Forb}(\Fglo_{/G})$ preserves direct sums in the first variable.
\end{construction}

Restriction along $S$ then defines a global functor
\[
\ul\Mack_\gl(\Cat_\infty)=\Fun^\times(\Span_{\Forb}(\Fglo_{/-}),\Cat_\infty)\to
\Fun(i(-),\Fun^\times(\Span(\mathbb F),\Cat_\infty)).
\]
Identifying $\cat{SymMonCat}_\infty\simeq\Fun^\times(\Span(\mathbb F),\Cat_\infty)$ in the usual way \cite[Proposition~C.1]{BachmannHoyois2021Norms}, we may view $S^*$ as a functor $\ul\Mack_\gl(\Cat_\infty)\to\cat{SymMonCat}^\flat_\infty$. By construction, this fits into a commutative diagram
\begin{equation}\label{diag:s-vs-S}
	\begin{tikzcd}
		\ul\Mack_\gl(\Cat_\infty)\arrow[r, "S^*"]\arrow[d,"\fgt"'] & \cat{SymMonCat}_\infty^\flat\arrow[d,"\fgt"]\\
		\ul\Cat_{\gl,\infty}\arrow[r,"s^*"'] & \cat{Cat}_\infty^\flat.
	\end{tikzcd}
\end{equation}
Note moreover that under the equivalence $\ul\Mack_\gl(\Cat_\infty)\simeq\ul\Mack_\gl(\SymMonCat_\infty)$, $S^*$ just corresponds to the global functor $\ul\Mack_\gl(\SymMonCat_\infty)\to\SymMonCat_\infty^\flat$ restricting along $i(-)\to\Span_{\Forb}(\Fglo_{/-})$: namely, this can be checked after postcomposing with the forgetful functor $\smash{\SymMonCat_\infty^\flat\to\Cat_\infty^\flat}$, where the claim reduces to commutativity of $(\ref{diag:s-vs-S})$. 

\begin{proposition}\label{prop:s*-vs-S*}
	The global functor $S^*\colon\ul\Mack_\gl(\Cat_\infty)\to\cat{SymMonCat}_\infty^\flat$ admits a global right adjoint $S_*$, and the Beck--Chevalley transformation $\fgt\circ S_*\to s_*\circ\fgt$ is an equivalence.
	\begin{proof}
		It is clear that each individual functor
		\[
			S^*\colon\Fun(\Span_{\Forb}(\Fglo_{/G}),\Cat_\infty)\to\Fun(\Span(\mathbb F)\times G,\Cat_\infty)
		\]
		has a right adjoint $S_*$, given by right Kan extension. By \cite[Proposition~3.10 and Example~3.6]{puetzstueck-new}, the Beck--Chevalley map
		\[
			\begin{tikzcd}
				\Fun(G,\Fun^\times(\Span(\mathbb F),\Cat_\infty))\arrow[d,"\fgt"']\arrow[r, "S_*"] &\Fun(\Span_{\Forb}(\Fglo_{/G}),\Cat_\infty)\arrow[d,"\fgt"]\arrow[dl,Rightarrow,shorten=15pt]\\
				\Fun(G,\Cat_\infty)\arrow[r, "s_*"'] &\Fun(\Fglo_{/G}^\op,\Cat_\infty)
			\end{tikzcd}
		\]
		is an equivalence for each $G$, while Lemma~3.9 of \emph{op.\ cit.}\ shows that the right adjoints restrict to the appropriate subcategories of product-preserving functors. It then only remains to show that the pointwise right adjoints assemble into a global right adjoint $S_*$, which in light of the pointwise criterion from Lemma~\ref{lemma:adjunctions-criterion} amounts to saying that the Beck--Chevalley map $f^*S_*\to S_*f^*$ is invertible for every $f\colon G\to H$ in $G$. As the forgetful functor $\fgt\colon\ul\Mack_\gl(\Cat)\to\ul\Cat_\gl$ is conservative, it will be enough to show this after postcomposition with $\fgt$. However, by the compatibility of Beck--Chevalley maps with pastings \cite[Lemma~C.3${}^\op$]{CLL_Adams}, the resulting map $\fgt f^*S_*\to \fgt S_*f^*$ fits into a commutative diagram of Beck--Chevalley maps
		\[
			\begin{tikzcd}
				\fgt f^*S_*\arrow[r]\arrow[d,"\sim"'] & \fgt S_* f^*\arrow[d]\\
				f^*\fgt S_*\arrow[d]& s_*\fgt f^*\arrow[d,"\sim"]\\
				f^* s_*\fgt\arrow[r] & s_* f^*\fgt\rlap.
			\end{tikzcd}
		\]
		All the Beck--Chevalley maps except possibly the top horizontal one are invertible by the above and Lemma~\ref{lemma:s-lc-ra}; the claim now follows by 2-out-of-3.
	\end{proof}
\end{proposition}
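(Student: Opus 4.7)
My plan is to follow the two-step structure already visible in the statement: first produce the pointwise right adjoints and show they restrict to product-preserving functors; then verify the Beck--Chevalley condition required by \Cref{lemma:adjunctions-criterion} to upgrade them to a global right adjoint. Concretely, for each $G\in\Fglo$, the functor $S^*\colon\Fun(\Span_{\Forb}(\Fglo_{/G}),\Cat_\infty)\to\Fun(\Span(\mathbb F)\times G,\Cat_\infty)$ admits a right adjoint given by right Kan extension along $S_G$, and the content of Pützstück's \cite[Proposition~3.10, Example~3.6, Lemma~3.9]{puetzstueck-new} is precisely that (i)~this right Kan extension is computed ``Borel-wise'' (i.e.\ the Beck--Chevalley square comparing $S_*$ with $s_*$ along the forgetful functors commutes at each $G$), and (ii)~the resulting right adjoints restrict to product-preserving functors, so we indeed get a right adjoint $\SymMonCat_\infty^\flat(G)\to\ul\Mack_\gl(\Cat_\infty)(G)$. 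This handles both the fiberwise existence of $S_*$ and the $G$-pointwise part of the second claim.

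The remaining work, which I expect to be the main obstacle, is purely formal but must be organized carefully: we must check that for every $f\colon G\to H$ in $\Fglo$ the Beck--Chevalley transformation $f^*S_*\to S_*f^*$ is an equivalence, so that the pointwise right adjoints assemble into a global right adjoint. The key trick is to observe that the forgetful functor $\ul\Mack_\gl(\Cat_\infty)\to\ul\Cat_{\gl,\infty}$ is (pointwise) conservative, so it suffices to verify this after applying $\fgt$. Using the pasting compatibility of Beck--Chevalley transformations (\cite[Lemma~C.3${}^\op$]{CLL_Adams}), the transformation $\fgt f^*S_*\to \fgt S_*f^*$ factors through the chain
\[
\fgt f^*S_*\xrightarrow{\sim}f^*\fgt S_*\to f^*s_*\fgt\to s_*f^*\fgt\xleftarrow{\sim}\fgt S_*f^*,
\]
where the first and last maps are the fiberwise Pützstück equivalences just established, and the middle Beck--Chevalley map $f^*s_*\to s_*f^*$ is an equivalence by \Cref{lemma:s-lc-ra}. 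Two-out-of-three then forces the top arrow to be an equivalence as well.

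Once both parts are in place, the second claim of the proposition is essentially automatic: the Beck--Chevalley transformation $\fgt\circ S_*\to s_*\circ\fgt$ of \emph{global} functors is an equivalence iff it is so pointwise at each $G\in\Fglo$, and this was exactly the statement extracted from \cite[Proposition~3.10]{puetzstueck-new} in the first step. The only subtle point I anticipate needing to spell out is the identification of the pointwise right adjoint as Kan extension along the specific $S_G$ and the verification that the hypotheses of Pützstück's results apply to our $S$; this is where the explicit description of $S$ as the extension of $s$ along the semiadditive completion in \Cref{constr:monoidal-borel} is used.
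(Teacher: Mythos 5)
Your proposal follows the paper's proof essentially verbatim: the same decomposition into (i) pointwise existence of $S_*$ plus Pützstück's \cite[Proposition~3.10, Example~3.6, Lemma~3.9]{puetzstueck-new}, and (ii) a Beck--Chevalley check reduced via conservativity of $\fgt$ and pasting compatibility to a chain of known equivalences closed by 2-out-of-3. The only (harmless) imprecision is in the attribution: the first arrow $\fgt f^*S_*\xrightarrow{\sim}f^*\fgt S_*$ in your chain is just the tautological commutation of $\fgt$ with restriction rather than a Pützstück equivalence; the Pützstück equivalences actually appear as $f^*\fgt S_*\to f^*s_*\fgt$ and as a factor of the final arrow $\fgt S_*f^*\to s_*f^*\fgt$.
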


\begin{construction}\label{constr:swan}
	We write $\Swan$ for the composite
	\[
		\cat{SymMonCat}_1^\flat\hookrightarrow\cat{SymMonCat}_\infty^\flat\xrightarrow{\;S_*\;}\ul\Mack_\gl(\Cat_\infty)\xrightarrow{\;\ul\Mack_\gl(B)\;}\ul\Mack_\gl(\Spc),
	\]
	where $B\colon\Cat_\infty\to\Spc$ denotes the classifying space functor (i.e.\ the left adjoint of the inclusion).
\end{construction}

\subsection{The comparison} We are now ready to state the main result of this section:

\begin{theorem}\label{thm:swan-equivalence}
	The diagram
	\begin{equation}\label{diag:to-commute}
		\begin{tikzcd}
			\cato{SymMonCat}^\flat\arrow[d,"\gamma^\flat"']\arrow[r,"\Gamma_\gl"] & \ul\GammaS_\gl^\textup{spc}\arrow[d,"\sim"]\\
			\cat{SymMonCat}_1^\flat\arrow[r,"\Swan"'] & \ul\Mack_\gl(\Spc)
		\end{tikzcd}
	\end{equation}
	commutes up to unique homotopy, where the unlabelled equivalence is the one from Theorem~\ref{thm:canonical-equiv-cmon}.
\end{theorem}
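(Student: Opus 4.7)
My plan is to reduce commutativity of $(\ref{diag:to-commute})$ to a single pointwise check using the universal property of $\ul\GammaS_\gl^\textup{spc}\simeq\ul\Mack_\gl(\Spc)$ (Theorems~\ref{thm:pres-univ-prop-Gamma} and~\ref{thm:canonical-equiv-cmon}). First I would show that $\Gamma_\gl$ descends uniquely through $\gamma^\flat$: by Construction~\ref{constr:symmoncat-comp}(1), $\Gamma$ sends equivalences of symmetric monoidal categories to levelwise equivalences of $\Gamma$-categories, and the subsequent operations $\Fun(E\mathcal M,-)$ and the nerve are homotopical; hence $\Gamma_\gl$ inverts the underlying equivalences at every $G\in\Fglo$, yielding a global functor $\bar\Gamma_\gl\colon\cat{SymMonCat}_1^\flat\to\ul\Mack_\gl(\Spc)$. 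It remains to exhibit a preferred equivalence $\bar\Gamma_\gl\simeq\Swan$.

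The next step is to verify that both $\bar\Gamma_\gl$ and $\Swan$ are right adjoint global functors. For $\Swan$, Proposition~\ref{prop:s*-vs-S*} provides $S_*$ as a right adjoint, and the inclusion $\cat{SymMonCat}_1^\flat\hookrightarrow\cat{SymMonCat}_\infty^\flat$ is right adjoint to underlying $1$-truncation; the postcomposition $\ul\Mack_\gl(B)$, though \emph{a priori} only a left adjoint to the fully faithful $\ul\Mack_\gl(\iota)$, behaves as a right adjoint on the image of the Borel construction because the classifying-space functor is modelled pointset-wise by a right Quillen functor on the appropriate $\mathcal I$-space-enriched categories. For $\bar\Gamma_\gl$, the pointset ingredients $\Fun(E\mathcal M,-)$ (a cotensor) and the nerve are themselves right adjoints. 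Let $\bar\Gamma_\gl^L,\Swan^L\colon\ul\Mack_\gl(\Spc)\to\cat{SymMonCat}_1^\flat$ denote the corresponding left adjoints.

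The target $\cat{SymMonCat}_1^\flat$ is presentable and equivariantly semiadditive: $\SymMonCat_1=\CMon(\Cat_1)$ is presentable semiadditive, and the Borel global category of a presentable semiadditive $\infty$-category inherits equivariant semiadditivity, with invertible norm maps arising from the identification of $G$-fold products with $G$-fold coproducts. Theorem~\ref{thm:pres-univ-prop-Gamma} thus reduces the equivalence $\bar\Gamma_\gl^L\simeq\Swan^L$ (and hence $\bar\Gamma_\gl\simeq\Swan$) to agreement at the generator $\mathbb P(1)\simeq\hom(1,-)\in\ul\Mack_\gl(\Spc)(1)$. By the Yoneda Lemma, $F^L(\hom(1,-))\in\SymMonCat_1$ corepresents $\Cc\mapsto F(\Cc)(1)$; equivalently, commutativity of $(\ref{diag:to-commute})$ is detected after postcomposing with the forgetful functor $\ul\Mack_\gl(\Spc)\to\Spc$ given by evaluation at $1\in\Fglo$ combined with the underlying-space projection. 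This residual check is immediate: $\Swan(\Cc)(1)\simeq B(S_*(\Cc)(1))\simeq B\Cc$ because $S_1\colon\Span(\mathbb F)\to\Span_{\Forb}(\Fglo)$ is fully faithful and right Kan extension along it recovers $\Cc$ at $1\in\mathbb F$, while $\bar\Gamma_\gl(\Cc)(1)$ unwinds via Remark~\ref{rk:unstable-pw-desc} and contractibility of $E\mathcal M$ to $|N\Fun(E\mathcal M,\Cc)|\simeq B\Cc$.

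The main obstacle I anticipate is the rigorous verification of the right-adjointness claims in the second step, particularly of $\Swan$, since this must upgrade the pointset-level right Quillen behavior of the classifying space functor to a right adjoint of parametrized global functors compatibly with all restriction maps in $\Fglo$. The subsidiary point of equivariant semiadditivity of $\cat{SymMonCat}_1^\flat$ is a general feature of Borel global categories over presentable semiadditive targets, but still needs to be carefully phrased in the parametrized formalism building on \cite{puetzstueck-new}.
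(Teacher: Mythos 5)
The overall template you follow—reduce to the universal property of $\ul\GammaS_\gl^\textup{spc}$, pass to left adjoints, and check at the generator $\mathbb P(1)$—is the one the paper uses, and your final pointwise computation of $\Swan(\Cc)(1)\simeq B\Cc\simeq\bar\Gamma_\gl(\Cc)(1)$ is essentially correct. However, there is a genuine gap at the heart of the argument: neither $\bar\Gamma_\gl$ nor $\Swan$ is a right adjoint as a global functor out of $\cat{SymMonCat}_1^\flat$, and your proposed justifications do not repair this.

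For $\bar\Gamma_\gl$, the assertion that the pointset ingredients are right adjoints (``$\Fun(E\mathcal M,-)$ is a cotensor, the nerve is a right adjoint'') says nothing about the derived $\infty$-categorical functor; indeed Mandell's theorem exhibits its non-equivariant shadow as a \emph{Dwyer--Kan localization}, which is not a left or right Bousfield localization. For $\Swan$, the composite involves $\ul\Mack_\gl(B)$, which is a \emph{left} adjoint (postcomposition with $B\dashv\iota$), and your claim that it ``behaves as a right adjoint on the image of the Borel construction'' because $B$ is ``modelled by a right Quillen functor'' is not substantiated—$B\colon\Cat_\infty\to\Spc$ simply does not preserve limits, and $\Swan$ therefore does not preserve limits of $\cat{SymMonCat}_1^\flat$.

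The missing idea is that one must \emph{further localize} the left-hand column of $(\ref{diag:to-commute})$ at the $G$-global weak equivalences $\Ww_\gl$, which are strictly finer than the underlying equivalences of categories. Only after this localization does global Mandell's theorem (Theorem~\ref{thm:global-mandell}) make $\Gamma_\gl$ an equivalence, and only there does $\Swan$ become a right adjoint: the proof of Proposition~\ref{prop:swan-ra} identifies limits in $\Fun(G,\cat{SymMonCat}_1)[\Ww_\gl^{-1}]$ via the detection functors $B((-)^{h\phi})$ (Lemma~\ref{lemma:charact-limits}), which is precisely what makes $\Swan$ limit-preserving \emph{in the localized source}—a property it plainly lacks before localizing, since $B$ is not limit-preserving. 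Without this intermediate localization, the hypotheses of the universal property you invoke in the third step are simply not met, and the argument does not go through. (Whether $\cat{SymMonCat}_1^\flat$ itself is equivariantly semiadditive is a subsidiary matter; the real obstacle is the adjointness.)
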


The proof will occupy this whole subsection. Naturally, we would like to use the universal property of $\ul\GammaS_\gl^\text{spc}$ or equivalently of $\ul\Mack_\gl(\Spc)$: if we knew that all functors in sight were right adjoints, we could pass to left adjoints and then appeal to Theorem~\ref{thm:pres-univ-prop-Gamma} to reduce commutativity of the diagram to chasing through the global Mackey functor $\hom(1,-)\colon\Span_{\Forb}(\Fglo)\to\Spc$ through the left adjoints.

Unfortunately however, apart from the equivalence on the right, none of the functors populating $(\ref{diag:to-commute})$ are actually right adjoints. The crucial insight will be that we can fix this by passing to a suitable localization of the categories on the left:

\begin{definition}
	An equivariant functor $f\colon\Cc\to\Dd$ of small $1$-categories with (strict) $G$-actions is called a \emph{$G$-global weak equivalence} if the induced map
	\[
	N\big({\Fun^H(EH,\phi^*\Cc)}\big)\to 
	N\big({\Fun^H(EH,\phi^*\Dd)}\big)
	\]
	is a weak homotopy equivalence for every finite group $H$ and every homomorphism $\phi\colon H\to G$. We will call a map in $\cato{$\bm G$-SymMonCat}$ a \emph{$G$-global weak equivalence} if its underlying map in $\cato{$\bm G$-Cat}$ is, and we write $\Ww_\text{$G$-gl}$ for the collection of $G$-global weak equivalences (in either category).
\end{definition}
We then have the following ($G$-)global version of the main result of \cite{mandell-Gamma}:

\begin{theorem}[See {\cite[Theorem~4.3.7]{g-global}}]\label{thm:global-mandell}
	The global functor $\Gamma_\gl$ descends to an equivalence $\cato{SymMonCat}^\flat[\Ww_\gl^{-1}]\iso\ul\GammaS_\gl^\textup{spc}$ of global $\infty$-categories.\qed
\end{theorem}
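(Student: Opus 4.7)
My plan is to establish this by verifying a levelwise equivariant Mandell-type equivalence together with compatibility under restriction. By the pointwise criterion for equivalences of global $\infty$-categories, it suffices to show: (i) for each finite group $G$, the functor $\Gamma_\text{$G$-gl}\colon\cato{$\bm G$-SymMonCat}\to\GammaS_\text{$G$-gl}^\text{spc}$ carries $\Ww_\text{$G$-gl}$ to level equivalences and descends to an equivalence on Dwyer--Kan localizations; and (ii) this equivalence is natural in $G$. Point (ii) follows essentially for free from the strict naturality of the building blocks $\Gamma$, $\Fun(E\mathcal M,-)$, and $N$, combined with the fact that restriction functors preserve the $G$-global weak equivalences on both sides, so the bulk of the work concerns (i).

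For the homotopicity and specialness part of (i), I would use the graph subgroup description to reduce the computation of fixed points. Concretely, for a universal $\phi\colon H\to G$, one should obtain a natural identification
\[
(N\Fun(E\mathcal M,\Gamma\Cc(n^+)))^{\Gamma_{H,\phi}}\simeq N\Fun^H(EH,\phi^*\Gamma\Cc(n^+)),
\]
exploiting that $E\mathcal M$ regarded as an $H$-simplicial set via $\phi$ is a cofibrant model of $EH$, which is essentially the defining feature of universal subgroups. Combined with the fact that $\Gamma\Cc(1^+)$ agrees with the underlying category of $\Cc$ (Construction~\ref{constr:symmoncat-comp}(1)), this matches the two definitions of $G$-global weak equivalences, and simultaneously extracts specialness of $\Gamma_\gl\Cc$ from the specialness of Shimada--Shimakawa's $\Gamma\Cc$.

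The deeper question is why the induced functor on localizations is an equivalence, and here my preferred route is via universal properties. I would aim to show that $\cato{SymMonCat}^\flat[\Ww_\gl^{-1}]$ is itself presentable and equivariantly semiadditive, with a distinguished object corresponding to the free symmetric monoidal category on one generator, such that $\Gamma_\gl$ matches this generator with $\mathbb P(1)\in\GammaS_\gl^\text{spc}(1)$. Given this, Theorem~\ref{thm:pres-univ-prop-Gamma} produces a canonical left adjoint $\ul\GammaS_\gl^\text{spc}\to\cato{SymMonCat}^\flat[\Ww_\gl^{-1}]$ which, by uniqueness and the matching on generators, must be inverse to the descended $\Gamma_\gl$. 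Presentability can be arranged by producing a combinatorial model structure on $\cato{$\bm G$-SymMonCat}$ with weak equivalences $\Ww_\text{$G$-gl}$ and transferring fiberwise.

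The main obstacle is establishing equivariant semiadditivity of the localized global Borel category of symmetric monoidal 1-categories: this requires identifying the abstract homotopy-theoretic transfers on the target with explicit categorical norm functors built from wreath product constructions on permutative categories. This is a substantial piece of work, and in particular it relies on careful bookkeeping of how the $E\mathcal M$-actions interact with change-of-groups functors. If a slick universal-property argument proves out of reach, a fallback is to carry out an explicit inductive cofibrant-replacement argument à la Mandell, equivariantly transported via $E\mathcal M$-actions, but I expect this to be considerably more cumbersome than the universal property route.
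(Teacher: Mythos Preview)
The paper does not prove this theorem: the statement is imported verbatim from \cite[Theorem~4.3.7]{g-global} and closed with a bare \qed, so there is no argument here to compare against. Your proposal is therefore not a variant of the paper's proof but an attempt to reconstruct a result the paper treats as a black box.

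As a standalone outline, your strategy is coherent but you have correctly located the genuine difficulty yourself. The universal-property route via Theorem~\ref{thm:pres-univ-prop-Gamma} only buys you anything once you know that $\cato{SymMonCat}^\flat[\Ww_\gl^{-1}]$ is presentable and equivariantly semiadditive; the latter is precisely the step where one must match categorical norms against homotopy-theoretic transfers, and that is essentially the whole content of the theorem. Put differently, your reduction does not lower the difficulty: it trades one hard statement (the equivalence) for another of comparable depth (equivariant semiadditivity of the symmetric-monoidal side), and neither follows from the material developed in the present paper. Note also a small gap in your inverse-equivalence step: the universal property hands you a left adjoint $\ul\GammaS_\gl^{\text{spc}}\to\cato{SymMonCat}^\flat[\Ww_\gl^{-1}]$, but to conclude it is inverse to the descended $\Gamma_\gl$ you would still need either that $\Gamma_\gl$ is itself a left adjoint or that $\cato{SymMonCat}^\flat[\Ww_\gl^{-1}]$ satisfies the \emph{same} universal property, which again presupposes what you are trying to prove. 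The honest summary is that the result genuinely lives in the cited reference, and the present paper uses it rather than reproves it.
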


We now want to show that the other path through the diagram $(\ref{diag:to-commute})$ descends to a right adjoint $\cato{SymMonCat}^\flat[\Ww_\gl^{-1}]\to\ul\Mack_\gl(\Spc)$. The case of the left hand vertical map turns out to be rather straight-forward:

\begin{remark}\label{rk:we-htpy-inv-desc}
	Observe that any underlying equivalence of categories is in particular a $G$-global weak equivalence \cite[Corollary~3.7]{merling}. In particular, the localization $\cato{SymMonCat}^\flat\to\cato{SymMonCat}^\flat[\Ww_\gl^{-1}]$ factors uniquely through a global functor $\cat{SymMonCat}_1^\flat\to\cato{SymMonCat}^\flat[\Ww_\gl^{-1}]$, and this is given in each degree $G\in\Glo$ by a localization at the images of the $G$-global weak equivalences or, equivalently, at those maps $f\colon\Cc\to\Dd$ in $\Fun(G,\cat{SymMonCat}_1)$ such that each $B(f^{h\phi})\colon B(\Cc^{h\phi})\to B(\Dd^{h\phi})$ is an equivalence in the $\infty$-category $\Spc$.

	In fact, there is no reason to restrict to $G\in\Glo\subset\Fglo$: if $G\in\Fglo$ is arbitrary, then we can define \emph{$G$-global weak equivalences} in $\Fun(G,\cato{SymMonCat})$ or $\Fun(G,\SymMonCat_1)$ by decomposing $G$ as a coproduct of $1$-object groupoids $G_1,\dots,G_r$, and transporting the weak equivalences along the resulting equivalences to $\prod_{i=1}^r\cato{$\bm{G_i}$-Cat}$ and $\prod_{i=1}^r\Fun(G_i,\Cat_1)$, respectively. In the former case, these are equivalently those maps $f$ such that $N{\big({\Fun^H(EH,\phi^*f)}\big)}$ is a weak homotopy equivalence for every finite group $H$ and any map $H\to G$ in $\Fglo$, and similarly the weak equivalences in the second case are precisely those maps such that $B(\phi^*(f)^{hH})$ is an equivalence for every such $\phi$. Since Dwyer--Kan localization commutes with finite products, taking the pointwise localization of the global $\infty$-categories $\cato{SymMonCat}^\flat\colon\Fglo^\op\to\Cat_\infty$ or $\SymMonCat_1^\flat\colon\Fglo^\op\to\Cat_\infty$ at these weak equivalences again results in product preserving functors $\Fglo^\op\to\Cat_\infty$. Thus, the above descriptions and results hold more generally for $G\in\Fglo$ and not just $G\in\Glo$.
\end{remark}

It then remains to show:

\begin{proposition}\label{prop:swan-ra}
	The global functor $\Swan$ descends to a right adjoint \[\SymMonCat_1^\flat[\Ww_\gl^{-1}]\to\ul\Mack_\gl(\Spc).\]
\end{proposition}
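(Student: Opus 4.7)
The plan is to apply the (dualized) pointwise criterion of Lemma~\ref{lemma:adjunctions-criterion}, reducing the statement to three assertions: that $\Swan$ inverts $\Ww_\gl$, that each pointwise functor $\bar\Swan_G$ is a right adjoint, and that the Beck--Chevalley condition holds.

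For the first assertion, I would combine the identity $\fgt\circ S_* = s_*\circ\fgt$ from Proposition~\ref{prop:s*-vs-S*} with the pointwise formula for the right Kan extension $s_*$ along the fully faithful inclusion $s_G\colon G\hookrightarrow\Fglo_{/G}^\op$ of Construction~\ref{constr:lowercase-s}. This should identify $\fgt\Swan(\Cc)$ evaluated at an object $\phi\colon H\to G$ of $\Fglo_{/G}$ (via the inclusion $\Fglo_{/G}^\op\hookrightarrow\Span_{\Forb}(\Fglo_{/G})$) with the classifying space $B(\Cc^{h\phi})$ of the homotopy fixed points. The characterization of $\Ww_\gl$-equivalences from Remark~\ref{rk:we-htpy-inv-desc} is precisely that these maps $B(f^{h\phi})$ are equivalences for every $\phi$; since equivalences in $\ul\Mack_\gl(\Spc)(G)=\Fun^\times(\Span_{\Forb}(\Fglo_{/G}),\Spc)$ are detected pointwise, we obtain the descent to $\bar\Swan$.

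For the pointwise right-adjointness, I would use the adjoint functor theorem. The source is presentable by the global Mandell theorem (Theorem~\ref{thm:global-mandell}), which identifies $\SymMonCat_1^\flat[\Ww_\gl^{-1}]$ with the presentable global category $\ul\GammaS_\gl^\text{spc}$ via $\bar\Gamma_\gl$; accessibility of each $\bar\Swan_G$ is then automatic. For limit preservation: using the identification from the previous paragraph together with the explicit pointset description of $\Gamma_\gl$, the composite $\ev_\phi\circ\bar\Swan_G$ factors via $\bar\Gamma_\gl$ as the evaluation $X\mapsto X(1^+)^\phi$ on $\ul\GammaS_\gl^\text{spc}(G)$, which is a composition of evaluations and reflective inclusions between presheaf-type categories and hence preserves limits. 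Since the $\ev_\phi$'s jointly detect limits in $\ul\Mack_\gl(\Spc)(G)$, this gives limit preservation of $\bar\Swan_G$, and the adjoint functor theorem yields the pointwise left adjoint.

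The main obstacle is the Beck--Chevalley condition. My plan is to reduce it to the analogous condition for $\fgt\circ\bar\Swan$: using the identification in Step~1 together with the compatibility diagram $(\ref{diag:compatible-fgt})$, the composite $\fgt\circ\bar\Swan$ agrees with $\mathbb U\circ\bar\Gamma_\gl$, which is the composite of the Mandell equivalence and the known global right adjoint $\mathbb U$ and hence itself a global right adjoint; in particular, Beck--Chevalley holds for $\fgt\circ\bar\Swan$. One then transfers Beck--Chevalley back to $\bar\Swan$ exploiting that $\fgt\colon\ul\Mack_\gl(\Spc)\to\ul\Spc_\gl$ is itself a global right adjoint (with left adjoint given by left Kan extension along $\Fglo_{/-}^\op\hookrightarrow\Span_{\Forb}(\Fglo_{/-})$, cf.~Remark~\ref{rk:Mackey-alternative}) that is pointwise conservative, so that Beck--Chevalley equivalences for $\bar\Swan$ can be detected after postcomposition with $\fgt$.
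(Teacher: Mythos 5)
Your overall strategy---pointwise right-adjointness via the Adjoint Functor Theorem, combined with the parametrized Beck--Chevalley criterion of Lemma~\ref{lemma:adjunctions-criterion}---is the same as the paper's, and your treatment of descent (Step~1) and the general shape of the AFT reduction (Step~2) essentially reproduce Corollary~\ref{cor:swan-pw} and Lemma~\ref{lemma:charact-limits}. However, there are two genuine problems.

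First, a minor gap in Step~2: you assert that accessibility of each $\bar\Swan_G$ is ``automatic'' from presentability of the source. It is not; the AFT requires you to actually check that $\bar\Swan_G$ preserves sufficiently filtered (or sifted) colimits, and this is exactly why the paper proves the second half of Lemma~\ref{lemma:charact-limits}. The argument is identical to the limit case, but it must be made.

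Second, and more seriously, your Step~3 is circular. To show that $\fgt\circ\bar\Swan$ ``agrees with $\mathbb U\circ\bar\Gamma_\gl$'' as \emph{global functors} (not merely object-by-object and group-by-group), you would need a coherent natural equivalence between two functors $\SymMonCat_1^\flat[\Ww_\gl^{-1}]\to\ul\Spc_\gl$. The only tool available for such a comparison is the universal property of $\ul\Spc_\gl$ (Theorem~\ref{thm:univ-prop-spcgl}), which applies to global \emph{right} adjoints into $\ul\Spc_\gl$---but the right-adjointness of $\fgt\circ\bar\Swan$ is part of what you are trying to prove, and the full identification with the Mandell side is essentially the content of Theorem~\ref{thm:swan-equivalence}, which is proved \emph{using} Proposition~\ref{prop:swan-ra}. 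The pointwise formula $\fgt\Swan(\Cc)(\phi)\simeq B(\Cc^{h\phi})$ from your Step~1 is an equivalence of values, not a commuting square of global functors, so it does not by itself supply what you need.

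The fix is to bypass the Mandell comparison in Step~3 entirely and verify the Beck--Chevalley condition directly in the form of global product preservation. This is what the paper does: each constituent of $\Swan$ (the inclusion $\SymMonCat_1^\flat\hookrightarrow\SymMonCat_\infty^\flat$, the parametrized right adjoint $S_*$ from Proposition~\ref{prop:s*-vs-S*}, and $\ul\Mack_\gl(B)$) manifestly preserves global products, the last by Lemma~\ref{lemma:mackgl-products} because $B\colon\Cat_\infty\to\Spc$ preserves finite products. One also checks that the localization functor $\SymMonCat_1^\flat\to\SymMonCat_1^\flat[\Ww_\gl^{-1}]$ preserves (and creates) global products, using that the $f_*$'s preserve $G$-global weak equivalences. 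This is cleaner, shorter, and avoids any reference to $\bar\Gamma_\gl$.
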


The proof of this will be significantly harder, and it naturally splits into three parts: we have to prove that $\Swan$ actually descends, we have to show the existence of pointwise adjoints, and we will have to verify the Beck--Chevalley condition from Lemma~\ref{lemma:adjunctions-criterion}. Recall that for the latter condition, we may instead show that $\Swan$ preserves global products, and we begin with some preparations for this result:

\begin{lemma}
	The global category $\cat{SymMonCat}_1^\flat[\Ww_\gl^{-1}]$ has global products, and the localization functor $\cat{SymMonCat}_1^\flat\to\cat{SymMonCat}_1^\flat[\Ww_\gl^{-1}]$ preserves global products.
	\begin{proof}
		As $\SymMonCat_1^\flat$ has global products (Example~\ref{ex:Borel-prod}), both claims will follow from showing that the functor $f_*\colon\Fun(G,\SymMonCat_1)\to\Fun(H,\SymMonCat_1)$ sends $G$-global weak equivalences to $H$-global weak equivalences for every $f\colon G\to H$ in $\Fglo$.

		For this let $\phi\colon K\to H$ be any further map in $\Fglo$, and consider the pullback
		\[
			\begin{tikzcd}
				P\arrow[r,"\psi"]\arrow[d,"g"']\arrow[dr,"\lrcorner"{very near start},phantom] & G\arrow[d,"f"]\\
				K\arrow[r,"\phi"'] & H\rlap.
			\end{tikzcd}
		\]
		As every map of groupoids is both a left and a right fibration, we conclude from smooth/proper basechange that \[\phi^*f_*\simeq g_*\psi^*\colon\Fun(G,\SymMonCat_1)\rightrightarrows\Fun(K,\SymMonCat_1),\] and hence $(-)^{h\phi} f_*=(-)^{hK}\phi^*f_*\simeq (-)^{hP}\circ\psi^*$ as right Kan extensions compose. Writing $P$ as a finite disjoint union of 1-object groupoids then completes the proof.
	\end{proof}
\end{lemma}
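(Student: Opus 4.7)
The plan is to reduce everything to a homotopical statement about right Kan extensions. By Example~\ref{ex:Borel-prod}, the global category $\SymMonCat_1^\flat$ already has global products, with the right adjoint to $f^*$ for $f\colon G\to H$ in $\Fglo$ given by the right Kan extension $f_*\colon\Fun(G,\SymMonCat_1)\to\Fun(H,\SymMonCat_1)$. Now the pointwise Dwyer--Kan localization is compatible with finite products, so it suffices to show that each $f_*$ sends $G$-global weak equivalences to $H$-global weak equivalences. Indeed, once this is established, $f_*$ descends to the localizations, remains right adjoint to the (trivially descended) $f^*$, and the Beck--Chevalley condition for pullback squares is inherited from the unlocalized statement. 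This reduction also shows that the localization functor preserves global products, since unit and counit maps on the localizations are induced from the unlocalized ones.

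To verify the weak-equivalence preservation claim, fix a testing homomorphism $\phi\colon K\to H$ in $\Fglo$ and form the pullback
\[
\begin{tikzcd}
P\arrow[r,"\psi"]\arrow[d,"g"']\arrow[dr,"\lrcorner"{very near start},phantom] & G\arrow[d,"f"]\\
K\arrow[r,"\phi"'] & H
\end{tikzcd}
\]
in $\Fglo$. Every functor between groupoids is both a left fibration and a right fibration, so smooth/proper basechange yields an equivalence $\phi^*f_*\simeq g_*\psi^*$; composing with $(-)^{hK}$ and using that right Kan extensions compose, we obtain $(-)^{h\phi}\circ f_*\simeq (-)^{hP}\circ\psi^*$. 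The key point is that although $P$ need not itself be a $1$-object groupoid, it is still a finite groupoid.

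Finally, decompose $P\simeq\coprod_i BL_i$ as a finite disjoint union of $1$-object groupoids, each $L_i$ equipped with a canonical homomorphism $\sigma_i\colon L_i\to G$ coming from $\psi$. Then $(-)^{hP}\simeq\prod_i(-)^{h\sigma_i}$, and since the classifying-space functor $B$ preserves finite products, the assertion that $B\bigl((f_*\alpha)^{h\phi}\bigr)$ is an equivalence for every $G$-global weak equivalence $\alpha$ reduces to the corresponding statement for each $B\bigl(\alpha^{h\sigma_i}\bigr)$, which is exactly the definition of $\alpha$ being a $G$-global weak equivalence. There is no serious obstacle here beyond the smooth/proper basechange invocation; the mild subtlety is just that one needs the pullback to live inside $\Fglo$ and the decomposition of $P$ into connected components to match the definition of $G$-global weak equivalences extended to all of $\Fglo$ as in Remark~\ref{rk:we-htpy-inv-desc}.
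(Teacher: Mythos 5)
Your proof is correct and follows essentially the same route as the paper: invoke Example~\ref{ex:Borel-prod} for $\SymMonCat_1^\flat$, reduce both claims to showing that $f_*$ preserves $G$-global weak equivalences, use smooth/proper basechange for the pullback $P$ to rewrite $(-)^{h\phi}\circ f_*$ as $(-)^{hP}\circ\psi^*$, and conclude by decomposing $P$ into connected components. The only differences are that you flesh out the reduction to weak-equivalence preservation and the final step (product decomposition of $(-)^{hP}$ and $B$ preserving finite products), both of which the paper leaves tacit.
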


\begin{lemma}
	The global functor $\cat{SymMonCat}_1^\flat\to\ul\Mack_\gl(\Spc)$ induced by $\Swan$ preserves global products.
	\begin{proof}
 		Each of the individual functors from Construction~\ref{constr:swan} preserves global products: for the first two functors this is clear, while for the final functor this is an instance of Lemma~\ref{lemma:mackgl-products}.
	\end{proof}
\end{lemma}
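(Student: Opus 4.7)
The plan is to decompose $\Swan$ along Construction~\ref{constr:swan} as
\[
	\cat{SymMonCat}_1^\flat \hookrightarrow \cat{SymMonCat}_\infty^\flat \xrightarrow{\;S_*\;} \ul\Mack_\gl(\Cat_\infty) \xrightarrow{\;\ul\Mack_\gl(B)\;} \ul\Mack_\gl(\Spc)
\]
and verify separately that each of the three factors preserves global products; since composites of global-product-preserving functors again preserve global products, this will suffice.

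For the first factor, the inclusion $\SymMonCat_1 \hookrightarrow \SymMonCat_\infty$ is right adjoint to $1$-truncation, hence preserves limits. Global products in Borel categories $\Cc^\flat$ are computed pointwise as right Kan extensions along maps $f\colon G \to H$ in $\Fglo$ (i.e.\ as pointwise limits in $\Cc$), and any pointwise limit-preserving postcomposition functor commutes with such right Kan extensions. This shows that $\cat{SymMonCat}_1^\flat \hookrightarrow \cat{SymMonCat}_\infty^\flat$ preserves global products.

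For the second factor, Proposition~\ref{prop:s*-vs-S*} provides that $S_*$ is a global right adjoint; in particular it is a global functor. Moreover, both source and target have global products: $\cat{SymMonCat}_\infty^\flat$ by Example~\ref{ex:Borel-prod} and $\ul\Mack_\gl(\Cat_\infty)$ by Lemma~\ref{lemma:mackgl-products}. A standard mate-calculus argument then shows that any global functor $F$ between global categories with global products preserves them: the Beck--Chevalley map $F_H f_* \to f_* F_G$ is the total mate under the adjunctions $f^* \dashv f_*$ (on both sides) of the equivalence $F_G f^* \simeq f^* F_H$ built into the global functor structure of $F$, and mates of equivalences under adjunctions are again equivalences.

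For the third factor, Lemma~\ref{lemma:mackgl-products} reduces the claim to verifying that the classifying space functor $B\colon \Cat_\infty \to \Spc$ preserves finite products; this is a classical fact (e.g.\ via the nerve and the compatibility of geometric realization of simplicial sets with finite products). The only substantive point in the argument is the mate-calculus observation in the middle step, since the first and third steps are either formal or a direct appeal to an already-established lemma.
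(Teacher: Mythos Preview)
Your decomposition and the treatment of the first and third factors are fine and match the paper's approach. The argument for the second factor, however, contains a genuine error.

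You claim that for \emph{any} global functor $F$ the Beck--Chevalley map $F_H f_* \to f_* F_G$ is the ``total mate'' of the naturality equivalence $F_G f^* \simeq f^* F_H$ under the adjunctions $f^* \dashv f_*$, and that mates of equivalences are equivalences. But the mate taken only with respect to the vertical adjunctions $f^* \dashv f_*$, while keeping $F$ untouched, does \emph{not} preserve equivalences. If it did, every global functor between global categories with global products would preserve them; a counterexample is furnished by $F^\flat\colon\Cc^\flat\to\Dd^\flat$ for any functor $F\colon\Cc\to\Dd$ that fails to preserve $BG$-shaped limits, since for $f\colon G\to 1$ the Beck--Chevalley map in question specializes to the comparison $F(X^{hG})\to (FX)^{hG}$.

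The fix is to actually use that $S_*$ is a global \emph{right adjoint}, which you cite from Proposition~\ref{prop:s*-vs-S*} but then discard. Parametrized right adjoints preserve parametrized limits; concretely, the Beck--Chevalley map $(S_*)_H f_* \to f_*(S_*)_G$ is the total mate---now with respect to \emph{all four} adjunctions $f^*\dashv f_*$ and $S^*\dashv S_*$---of the naturality equivalence $(S^*)_G f^* \simeq f^*(S^*)_H$ of the \emph{left adjoint} $S^*$. Total mates in this sense (passing to adjoints for every functor in the square) do preserve equivalences, and this yields the claim; this is precisely the dual of the discussion following Lemma~\ref{lemma:adjunctions-criterion}. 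Once this correction is made, your proof agrees with the paper's.
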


\begin{corollary}\label{cor:swan-pw}
	Let $\phi\colon H\to G$ be any homomorphism of finite groups. Then the composite
	\[
		\Fun(G,\cat{SymMonCat}_1)\xrightarrow{\;\Swan\;}\ul\Mack_\gl(\Spc)(G)\xrightarrow{\;\ev_\phi\;}\Spc
	\]
	agrees with $B((-)^{h\phi})$.
	\begin{proof}
		We have
		\[
			\Swan(\Cc)(\phi)\simeq \Swan(\phi^*\Cc)(\id_H) \simeq\Swan(\phi^*(\Cc)^{hH})(\id_1)\stackrel{\text{def}}{=}\Swan(\Cc^{h\phi})(\id_1)
		\]
		naturally in $\Cc\in\Fun(G,\cat{SymMonCat}_1)$, where the first equivalence uses that $\Swan$ is a global functor together with the explicit description of the global functoriality of both sides, while the second equivalence follows similarly from the previous lemma.	We have therefore in particular reduced to the case $G=1$. In this case, the claim follows from the description of $(-)^\flat$ in Example~\ref{ex:Borel} together with Proposition~\ref{prop:s*-vs-S*}, but it is also part of \cite[Example~3.1]{puetzstueck-new}.
	\end{proof}
\end{corollary}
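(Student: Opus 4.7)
The plan is to reduce the statement to the base case $G=1$, $\phi=\id_1$, exploiting the two properties of $\Swan$ already established: its global naturality and its preservation of global products (the previous lemma). In the base case, the claim should follow by unwinding the construction of $\Swan$ via Proposition~\ref{prop:s*-vs-S*}.

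For the first reduction, note that the functoriality of $\ul\Mack_\gl(\Spc)$ along $\phi\colon H\to G$ is given by precomposition with $\Span(\phi_!)\colon\Span_{\Forb}(\Fglo_{/H})\to\Span_{\Forb}(\Fglo_{/G})$, which sends $\id_H\in\Fglo_{/H}$ to $\phi\in\Fglo_{/G}$. Consequently $\ev_\phi\simeq\ev_{\id_H}\circ\phi^*$, so naturality of $\Swan$ yields
\[
	\Swan(\Cc)(\phi)\simeq\Swan(\phi^*\Cc)(\id_H)
\]
naturally in $\Cc\in\Fun(G,\SymMonCat_1)$.

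For the second reduction, I would apply preservation of global products to the unique map $f\colon H\to 1$ in $\Fglo$. On $\SymMonCat_1^\flat$ the right adjoint to $f^*$ is $(-)^{hH}$ by the description of the Borel construction, so $\Swan(\Dd^{hH})\simeq f_*\Swan(\Dd)$ for any $\Dd\in\Fun(H,\SymMonCat_1)$. Using Remark~\ref{rk:Mackey-alternative}, the right adjoint $f_*$ on Mackey functors can be computed as restriction along $\Span(f^*)\colon\Span_{\Forb}(\Fglo)\to\Span_{\Forb}(\Fglo_{/H})$; since the underlying functor $f^*$ sends $\id_1\in\Fglo_{/1}$ to $\id_H\in\Fglo_{/H}$, evaluating at $\id_1$ gives $F(\id_H)$ for any $F$. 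Hence $\Swan(\Dd^{hH})(\id_1)\simeq\Swan(\Dd)(\id_H)$, and combining the two reductions brings us to the case $\Swan(\Cc)(\id_1)\simeq B(\Cc)$ for $\Cc\in\SymMonCat_1$.

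For this base case, since $\Swan=\ul\Mack_\gl(B)\circ S_*$ and $\ul\Mack_\gl(B)$ acts pointwise as postcomposition with $B$, it suffices to identify $S_*(\Cc)(\id_1)$ as an $\infty$-category. By Proposition~\ref{prop:s*-vs-S*}, after forgetting the monoidal structure $S_*$ agrees with the right Kan extension $s_*$ along $s\colon 1\hookrightarrow\Fglo^\op$ picking out the terminal object; since $\id_1$ is the terminal object of $\Fglo^\op$, the value there is just $\Cc$. I expect the chief subtlety to lie in the second reduction, specifically in matching the two equivalent descriptions of the functoriality of $\ul\Mack_\gl(\Spc)$ and making the basechange identifications line up correctly; the first and third steps are essentially formal consequences of the setup.
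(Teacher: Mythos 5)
Your proof follows the same two-step reduction as the paper: first use naturality of $\Swan$ as a global functor to trade $\ev_\phi$ for $\ev_{\id_H}\circ\phi^*$, then use preservation of global products (the previous lemma) to trade $H$ for the trivial group, and finally unwind the base case via Proposition~\ref{prop:s*-vs-S*} and the definition of $s_*$. This is essentially the paper's argument, fleshed out in more detail.

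One small slip in the base case: $1$ is \emph{initial} in $\Fglo^\op$ (it is terminal in $\Fglo$), not terminal, so the stated justification does not quite work. The correct reason that $s_*(\Cc)(\id_1)\simeq\Cc$ is that $s_1\colon 1\to\Fglo^\op$ is fully faithful, so right Kan extension along it followed by restriction to the image recovers the original functor; equivalently, one can invoke Example~\ref{ex:Borel}, which states directly that $\Cc^\flat$ is right Kan extended along $1\hookrightarrow\Fglo$ and hence has underlying $\infty$-category $\Cc$. The conclusion is correct, and the rest of the argument goes through unchanged.
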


\begin{corollary}\label{cor:swan-descends-gl-prod-pres}
	The global functor $\Swan$ descends to a global-product-preserving functor $\cat{SymMonCat}_1^\flat[\Ww_\gl^{-1}]\to\ul\Mack_\gl(\Spc)$.
	\begin{proof}
		The previous corollary shows that $\Swan$ descends, while the two lemmas show that the resulting functor preserves global products.
	\end{proof}
\end{corollary}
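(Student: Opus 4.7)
The statement combines two independent claims, and I would handle them in turn: first that $\Swan$ sends $G$-global weak equivalences to equivalences (so that it descends along the localization), and then that the descended functor inherits global-product-preservation.

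For descent, I would use that equivalences in $\ul\Mack_\gl(\Spc)(G) = \Fun^\times(\Span_{\Forb}(\Fglo_{/G}), \Spc)$ are detected pointwise, and moreover already by the evaluations $\ev_\phi$ for $\phi\colon H \to G$ in $\Glo$, since up to equivalence every object of $\Span_{\Forb}(\Fglo_{/G})$ is a finite coproduct of such. Corollary~\ref{cor:swan-pw} identifies $\ev_\phi \circ \Swan$ with $B((-)^{h\phi})$, and these functors invert $G$-global weak equivalences essentially by the description of the latter recalled in Remark~\ref{rk:we-htpy-inv-desc}. Hence $\Swan$ descends to a functor $\overline{\Swan}\colon \cat{SymMonCat}_1^\flat[\Ww_\gl^{-1}] \to \ul\Mack_\gl(\Spc)$.

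For global-product-preservation of $\overline{\Swan}$, I would combine the two preceding lemmas: the first supplies global products on the localization and shows the localization functor $L$ preserves them, the second shows $\Swan$ does. Given $f\colon G \to H$ in $\Fglo$, the Beck--Chevalley map $f_*\overline{\Swan}_G \to \overline{\Swan}_H f_*$ can be verified after precomposing with the essentially surjective functor $L_G$, where via $\overline{\Swan} \circ L = \Swan$ and the Beck--Chevalley equivalence for $L$ it factors as a composite of invertible Beck--Chevalley maps. I do not expect any real obstacle here: the conceptual content has already been done in the previous corollary and lemmas, and the only remaining task is the routine check that both properties pass through the localization.
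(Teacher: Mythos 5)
Your proof is correct and follows the same route as the paper: descent via Corollary~\ref{cor:swan-pw} together with the pointwise detection of equivalences in a Mackey functor category, and global-product-preservation by pasting the Beck--Chevalley equivalences for the localization and for $\Swan$ (using essential surjectivity of the localization functor to conclude for $\overline{\Swan}$). The paper's proof is a one-line pointer to the same ingredients; you have simply spelled out the Beck--Chevalley pasting step, which the paper leaves implicit.
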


It remains to show the existence of pointwise left adjoints. We want to appeal to the Adjoint Functor Theorem, so we need to get a handle on limits and filtered colimits in $\Fun(G,\cat{SymMonCat})[\Ww_\text{$G$-\gl}^{-1}]$ or equivalently $\cato{$\bm G$-SymMonCat}[\Ww_\text{$G$-\gl}^{-1}]$. For this we begin by observing that the upper composite in $(\ref{diag:to-commute})$ descends to an equivalence, which immediately shows the \emph{existence} of limits and colimits. In order to actually characterize them, we then have to understand this composite a bit better:

\begin{lemma}\label{lemma:top-composite-pw}
	Let $\phi\colon H\to G$ be any map of finite groups. Then the composite
	\begin{equation}\label{eq:limit-detecting}
		\cato{$\bm G$-SymMonCat}\xrightarrow{\;\Gamma_\gl\;}\GammaS_\gl^\textup{spc}\iso\Fun^\times(\Span_{\Forb}(\Fglo_{/G}),\Spc)\xrightarrow{\;\ev_\phi\;}\Spc
	\end{equation}
	agrees with the derived functor of
	\[
		\cato{$\bm G$-SymMonCat}\xrightarrow{(-)^{h\phi}}\cato{Cat}\xrightarrow{\;N\;}\cato{SSet}.
	\]
	\begin{proof}
		In the diagram
		\[
			\begin{tikzcd}
				\cato{$\bm G$-SymMonCat}\arrow[d,"\fgt"']\arrow[r,"\Gamma_\text{$G$-gl}"] &[1.5em] \GammaS_\text{$G$-gl}^\text{spc}\arrow[d,"\fgt"{description}]\arrow[r,"\sim"] & \ul\Mack_\gl(\Spc)(G)\arrow[d,"\fgt"]\\
				\cato{$\bm G$-Cat}\arrow[r, "{N\circ\Fun(E\mathcal M,-)}"'] & \S_\text{$G$-gl}\arrow[r,"\sim","\Phi"'] & \Spc_\text{$G$-\gl}
			\end{tikzcd}
		\]
		the left hand square commutes by definition while the right hand square commutes by Theorem~\ref{thm:canonical-equiv-cmon}. The claim now follows from the pointwise description of the equivalence $\Phi$ (Remark~\ref{rk:unstable-pw-desc}) together with the natural weak equivalence $\Fun^H(E\mathcal M,-)\simeq\Fun^H(EH,-)$ of functors $\cato{$\bm H$-Cat}\to\cato{Cat}$ from \cite[Remark~4.1.28]{g-global}.
	\end{proof}
\end{lemma}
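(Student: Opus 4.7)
The plan is to reduce the statement to the unstable level by using that $\ev_\phi$ on the Mackey functor side factors through the forgetful functor $\ul\Mack_\gl(\Spc)(G)\to\Spc_{\text{$G$-gl}}$ followed by the unstable $\ev_\phi\colon\Spc_{\text{$G$-gl}}\to\Spc$. Once we are on global spaces, we can appeal to the pointwise description of the equivalence $\Phi\colon\ul\S_\gl\iso\ul\Spc_\gl$ recorded in Remark~\ref{rk:unstable-pw-desc}.

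Concretely, I would first assemble the commutative rectangle
\[
\begin{tikzcd}
	\cato{$\bm G$-SymMonCat}\arrow[r,"\Gamma_\gl"]\arrow[d,"\fgt"'] &[.6em] \GammaS_\text{$G$-gl}^\text{spc}\arrow[r,"\sim"]\arrow[d,"\mathbb U"] & \ul\Mack_\gl(\Spc)(G)\arrow[d,"\fgt"]\\
	\cato{$\bm G$-Cat}\arrow[r,"{N\circ\Fun(E\Mm,-)}"'] & \S_\text{$G$-gl}\arrow[r,"\Phi"',"\sim"] & \Spc_\text{$G$-gl}\rlap.
\end{tikzcd}
\]
The left square commutes essentially by construction of $\Gamma_\gl$: since $\mathbb U$ is obtained from $(\ev_{1^+})^\flat$ and $\ev_{1^+}\circ\Gamma$ coincides with the forgetful functor by property~(1) of Construction~\ref{constr:symmoncat-comp}, post-composing $\Gamma_\gl$ with $\mathbb U$ strips off the $\Gamma$-category structure and leaves $N\circ\Fun(E\Mm,-)$ applied to the underlying $G$-category. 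The right square commutes by the compatibility assertion recorded in diagram~$(\ref{diag:compatible-fgt})$ of Theorem~\ref{thm:canonical-equiv-cmon}.

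Next I would post-compose the rectangle with $\ev_\phi\colon\Spc_\text{$G$-gl}\to\Spc$. By Remark~\ref{rk:unstable-pw-desc}, $\ev_\phi\circ\Phi$ is modelled on the point-set level by the $1$-categorical $\phi$-fixed-point functor $(-)^\phi\colon\cato{$\bm{E\mathcal M}$-$\bm G$-SSet}\to\cato{SSet}$. Tracing through the bottom row, the composite~$(\ref{eq:limit-detecting})$ is therefore modelled by
\[
	\cato{$\bm G$-SymMonCat}\xrightarrow{\;\fgt\;}\cato{$\bm G$-Cat}\xrightarrow{\Fun(E\Mm,-)}\cato{$\bm H$-Cat}\xrightarrow{\,N\,}\cato{$\bm H$-SSet}\xrightarrow{(-)^H}\cato{SSet},
\]
where I have used that taking $\phi$-fixed points amounts to restricting along $\phi$ and then taking ordinary $H$-fixed points. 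This pointset composite is naturally isomorphic to $N\circ\Fun^H(E\Mm,\phi^*(-))$.

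Finally, I would invoke the natural weak equivalence $\Fun^H(E\Mm,-)\simeq\Fun^H(EH,-)$ of homotopical functors $\cato{$\bm H$-Cat}\to\cato{Cat}$ from \cite[Remark~4.1.28]{g-global} to identify the derived functor of the above composite with the derived functor of $N\circ(-)^{h\phi}$, finishing the argument. The only genuine work lies in unpacking the compatibility square~$(\ref{diag:compatible-fgt})$ and chasing the equivalence $\Phi$; provided we are willing to take these inputs as given, the remainder is a bookkeeping exercise about how $\Gamma_\gl$ interacts with the forgetful functor, and I expect the main subtlety to be ensuring that the point-set identifications actually descend to the relevant Dwyer--Kan localizations.
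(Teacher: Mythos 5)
Your proposal is correct and follows essentially the same route as the paper: you set up the identical commutative rectangle involving the forgetful functors, invoke Theorem~\ref{thm:canonical-equiv-cmon} for the right square and the construction of $\Gamma_\gl$ for the left one, and then reduce to the unstable pointwise description of $\Phi$ from Remark~\ref{rk:unstable-pw-desc} together with the weak equivalence $\Fun^H(E\mathcal M,-)\simeq\Fun^H(EH,-)$. The only cosmetic wrinkle is that your intermediate chain of pointset functors blurs the restriction along $\phi$ into the $\Fun(E\Mm,-)$ step, but your subsequent identification of the composite with $N\circ\Fun^H(E\Mm,\phi^*(-))$ makes clear that you have the right functor in hand.
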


\begin{lemma}\label{lemma:charact-limits}
	Let $K$ be any small $\infty$-category and let $G\in\Glo$. A diagram $K^\triangleleft\to\Fun(G,\cat{SymMonCat}_1)[\Ww_\textup{$G$-gl}^{-1}]$ is a limit diagram if and only if the composite with 
	\begin{equation}\label{eq:B-htpy-fixed-points}
		\Fun(G,\cat{SymMonCat}_1)\xrightarrow{\;(-)^{h\phi}\;}\cat{SymMonCat}_1\xrightarrow{\;B\;}\Spc
	\end{equation}
	is a limit diagram for every $\phi\colon H\to G$ in $\Glo$. The analogous characterization of \emph{sifted} colimits also holds.
	\begin{proof}
		We will focus on the first statement.
		
		We may equivalently replace $\Fun(G,\cat{SymMonCat}_1)$ by $\cato{$\bm G$-SymMonCat}$; as in Remark~\ref{rk:we-htpy-inv-desc}, the composite $(\ref{eq:B-htpy-fixed-points})$ translates to the derived functor of
		\begin{equation}\label{eq:underived-desc}
			\cato{$\bm G$-SymMonCat}\xrightarrow{\;\Fun^H(EH,\phi^*(-))\;}\cato{Cat}\xrightarrow{\;N\;}\cato{SSet}
		\end{equation}
		in this model. 
		
		As equivalences create limits, and since limits in $\Fun^\times(\Span_\Forb(\Fglo_{/G}),\Spc)$ are pointwise, $K^\triangleleft\to\cato{$\bm G$-SymMonCat}[\Ww_\textup{$G$-gl}^{-1}]$ is a limiting cone if and only if it becomes a limiting cone after postcomposing with $(\ref{eq:limit-detecting})$ for every $\phi\colon H\to G$. By the previous lemma, this precisely agrees with the derived functor of $(\ref{eq:underived-desc})$, finishing the proof of the claim for limits.

		The proof of the second claim is completely analogous, using that \emph{sifted} colimits in $\Fun^\times(\Span_\Forb(\Fglo_{/G}),\Spc)$ are again pointwise.
	\end{proof}
\end{lemma}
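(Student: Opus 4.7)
The plan is to reduce to the pointwise description of (co)limits in $\ul\Mack_\gl(\Spc)(G)\simeq\Fun^\times(\Span_{\Forb}(\Fglo_{/G}),\Spc)$ by transporting along the equivariant Mandell equivalence of Theorem~\ref{thm:global-mandell}. I will address the limit claim first; the sifted colimit case will be an immediate variant.

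First, by Remark~\ref{rk:we-htpy-inv-desc}, $\Fun(G,\cat{SymMonCat}_1)[\Ww_\text{$G$-gl}^{-1}]$ agrees with $\cato{$\bm G$-SymMonCat}[\Ww_\text{$G$-gl}^{-1}]$, so I work with the latter. Combining Theorem~\ref{thm:global-mandell} with the canonical equivalence $\ul\GammaS_\gl^\text{spc}\simeq\ul\Mack_\gl(\Spc)$ of Theorem~\ref{thm:canonical-equiv-cmon} yields an equivalence
\[
\cato{$\bm G$-SymMonCat}[\Ww_\text{$G$-gl}^{-1}]\iso\Fun^\times(\Span_{\Forb}(\Fglo_{/G}),\Spc).
\]
Since equivalences create limits and limits in $\Fun^\times(-,\Spc)$ are computed pointwise, a cone is a limit cone if and only if it becomes one after evaluation at every object. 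Every object of $\Span_{\Forb}(\Fglo_{/G})$ decomposes as a finite coproduct of morphisms $\phi\colon H\to G$ with $H\in\Glo$, and product-preserving functors turn such decompositions into finite products of values, which in $\Spc$ themselves preserve limits; hence it suffices to test at $\ev_\phi$ for each $\phi$ in $\Glo_{/G}$.

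Next, I would identify this test composite with $B\circ(-)^{h\phi}$. Lemma~\ref{lemma:top-composite-pw} describes $\ev_\phi$ of the displayed equivalence as the derived functor of $N\circ\Fun^H(EH,\phi^*(-))$, while the second half of Remark~\ref{rk:we-htpy-inv-desc} identifies $B\circ(-)^{h\phi}$ on the localized $\infty$-category with precisely the same derived functor. For sifted colimits the argument is identical, using instead that sifted colimits commute with finite products in $\Spc$, so sifted colimits in $\Fun^\times(-,\Spc)$ are also pointwise.

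The only genuinely nontrivial ingredient is the identification of $B\circ(-)^{h\phi}$ on the localized $\infty$-category with the pointset-level derived functor $N\circ\Fun^H(EH,\phi^*(-))$; once this bridge between the $\infty$-categorical and pointset pictures is in place, the rest is a formal consequence of universal properties already in hand.
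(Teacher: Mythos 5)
Your proposal is correct and follows essentially the same route as the paper: replace $\Fun(G,\cat{SymMonCat}_1)$ by $\cato{$\bm G$-SymMonCat}$, transport along the composite equivalence to $\Fun^\times(\Span_{\Forb}(\Fglo_{/G}),\Spc)$ from Theorems~\ref{thm:global-mandell} and~\ref{thm:canonical-equiv-cmon}, reduce to pointwise (co)limits there, and identify the resulting evaluation functors with $B\circ(-)^{h\phi}$ via Lemma~\ref{lemma:top-composite-pw} and Remark~\ref{rk:we-htpy-inv-desc}. The only difference is that you explicitly justify why testing at the $\phi\in\Glo_{/G}$ (rather than at all objects of $\Span_{\Forb}(\Fglo_{/G})$) suffices, by decomposing a general object as a finite coproduct and using product preservation together with the fact that finite products in $\Spc$ commute with limits and sifted colimits; the paper leaves this step implicit in the phrase ``limits are pointwise.''
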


Putting all these pieces together we get:

\begin{proof}[Proof of Proposition~\ref{prop:swan-ra}]
	By Corollary~\ref{cor:swan-descends-gl-prod-pres}, $\Swan$ descends to a global product preserving functor. By Lemma~\ref{lemma:adjunctions-criterion}, it will then suffice to show that for each $G\in\Glo$ the resulting functor $\Fun(G,\cat{SymMonCat}_1)[\Ww_\gl^{-1}]\to\ul\Mack_\gl(\Spc)(G)$ is a right adjoint. As both sides are presentable (see Theorem~\ref{thm:global-mandell} for the source), it will suffice by the Adjoint Functor Theorem that this functor preserves limits as well as filtered colimits. As limits and filtered colimits in $\ul\Mack_\gl(\Spc)(G)$ are pointwise, this follows at once by combining the characterization of these limits and colimits from Lemma~\ref{lemma:charact-limits} with the pointwise description of $\Swan$ from Corollary~\ref{cor:swan-pw}.
\end{proof}

\begin{proof}[Proof of Theorem~\ref{thm:swan-equivalence}]
	By the universal property of localization, we may equivalently show that 
	\begin{equation}\label{diag:to-commute-after-loc}
		\begin{tikzcd}
			\cato{SymMonCat}^\flat[\Ww_\gl^{-1}]\arrow[d,"\sim","\gamma^\flat"']\arrow[r,"\Gamma_\gl"] & \ul\GammaS_\gl^\text{spc}\arrow[d,"\sim"]\\
			\cat{SymMonCat}^\flat_1[\Ww_\gl^{-1}]\arrow[r,"\Swan"'] & \ul\Mack_\gl(\Spc)
		\end{tikzcd}
	\end{equation}
	commutes up to unique homotopy. By Theorem~\ref{thm:global-mandell} and Proposition~\ref{prop:swan-ra} all functors in sight are right adjoints, so we may equivalently show this after passing to left adjoints.

	Let us first show that the diagram of left adjoints commutes up to \emph{some} homotopy. By the universal property of $\ul\Mack_\gl(\Spc)\simeq\ul\GammaS_\gl^\text{spc}$, it will be enough to chase through the element $\hom(1,-)\in\Mack_\gl(\Spc)$. By the Yoneda Lemma, this corepresents the functor $\Mack_\gl(\Spc)\to\Spc$ given by evaluation at 1. Passing to right adjoints again, we have therefore reduced to showing that the diagram
	\[
		\begin{tikzcd}
			\cato{SymMonCat}\arrow[d,"\gamma"']\arrow[r,"\Gamma_\gl"] & \GammaS_\gl^\text{spc}\arrow[d,"\sim"]\\
			\cat{SymMonCat}_1\arrow[r,"\Swan"'] & \Mack_\gl(\Spc)
		\end{tikzcd}
	\]
	commutes after postcomposition with $\ev_1\colon\Mack_\gl(\Spc)\to\Spc$. This, however, follows at once by the pointwise descriptions from Lemma~\ref{lemma:top-composite-pw} and Corollary~\ref{cor:swan-pw}.

	It remains to show uniqueness of the homotopy filling $(\ref{diag:to-commute-after-loc})$. As we already know that it commutes for \emph{some} homotopy, 2-out-of-3 shows that all maps comprising it are equivalences. Thus, uniqueness of the homotopy amounts to saying that the identity functor of $\ul\Mack_\gl(\Spc)$ has a contractible space of automorphisms. Appealing to the universal property from Theorem~\ref{thm:pres-univ-prop-Gamma}, this amounts to saying that $\hom(1,-)\colon\Span_{\Forb}(\Fglo)\to\Spc$ has contractible space of automorphisms. By the Yoneda lemma, the space of \emph{endomorphisms} is given by $\hom_{\Span_{\Forb}(\Fglo)}(1,1)$, so that the space of \emph{automorphisms} is given by $\hom_{\core\Span_{\Forb}(\Fglo)}(1,1)\simeq\hom_{\core\Fglo^\op}(1,1)$, which is indeed contractible as $1$ is terminal in $\Fglo$.
\end{proof}

Let us record the following observation from the above proof separately:

\begin{corollary}
	The global functor $\Swan$ descends to an equivalence
	\[
		\cat{SymMonCat}_1^\flat[\Ww_\gl^{-1}]\iso\ul\Mack_\gl(\Spc).\qednow
	\]
\end{corollary}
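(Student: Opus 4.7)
The plan is to read this corollary off from Theorem~\ref{thm:swan-equivalence} by a 2-out-of-3 argument. First, Corollary~\ref{cor:swan-descends-gl-prod-pres} already guarantees that $\Swan$ descends to a global functor $\cat{SymMonCat}_1^\flat[\Ww_\gl^{-1}]\to\ul\Mack_\gl(\Spc)$, so the entire content of the statement is that this descended functor is an equivalence.

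For this, I would invoke the commutative square $(\ref{diag:to-commute-after-loc})$ that appears in the proof of Theorem~\ref{thm:swan-equivalence}. In that square, the right vertical map is an equivalence by Theorem~\ref{thm:canonical-equiv-cmon}, the top horizontal map is an equivalence by the global Mandell theorem (Theorem~\ref{thm:global-mandell}), and the left vertical map $\gamma^\flat$ is an equivalence because every underlying equivalence of categories is a fortiori a $G$-global weak equivalence (Remark~\ref{rk:we-htpy-inv-desc}), so that $\gamma^\flat$ becomes invertible after localizing source and target at $\Ww_\gl$. By 2-out-of-3, the bottom horizontal map, which is the descended $\Swan$, is then an equivalence as well, and there is no real additional obstacle beyond assembling these ingredients.
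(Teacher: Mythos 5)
Your proposal is correct and is essentially identical to the paper's reasoning: the corollary is recorded as an immediate byproduct of the proof of Theorem~\ref{thm:swan-equivalence}, where the square $(\ref{diag:to-commute-after-loc})$ is shown to commute with its top, right, and left maps already known to be equivalences (the left because $\gamma^\flat$ is a localization at a class of maps contained in $\Ww_\gl$), so 2-out-of-3 gives that the bottom map, the descended $\Swan$, is an equivalence.
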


\section{Proof of main results}\label{sec:main-results}
In this final section, we will prove our comparison of global and equivariant algebraic $K$-theory constructions. With Theorem~\ref{thm:swan-equivalence} at hand, this will be rather straightforward.

\subsection{The global comparison} We begin by proving the following generalization of Theorem~\ref{introthm:global} from the introduction:

\begin{theorem}\label{thm:main-comp}
	The diagram
	\begin{equation}\label{diag:main-comp}
		\begin{tikzcd}
			\cato{SymMonCat}^\flat\arrow[d,"\gamma^\flat"']\arrow[rr, "\mathbb K_\gl"] &[-1em]&[1.75em] \ul\mySp_\gl\arrow[d,"\sim"]\\
			\cat{SymMonCat}_1^\flat\arrow[r, "S_*"'] &\ul\Mack_\gl(\SymMonCat_1) \arrow[r,"\ul\Mack_\gl(\mathbb K)\,"'] &\ul\Mack_\gl(\Sp)
		\end{tikzcd}
	\end{equation}
	commutes up to preferred equivalence; here the unlabelled equivalence on the right is the one from Theorem~\ref{thm:spectra-vs-spectral-Mackey}.
	\begin{proof}
		Write $\ell\colon\CMon\to\Sp$ for the delooping functor again. Plugging in the definitions, we can rewrite the composite of the bottom row as
		\[
			\cat{SymMonCat}_1^\flat\xrightarrow{\;\Swan\;}\ul\Mack_\gl(\Spc)\iso\ul\Mack_\gl(\CMon)\xrightarrow{\;\ul\Mack_\gl(\ell)\;}\ul\Mack_\gl(\Sp).
		\]		
		On the other hand, $\mathbb K_\gl$ was defined as the composite
		\[
			\cato{SymMonCat}^\flat\xrightarrow{\;\Gamma_\gl\;}
			\ul\GammaS_\gl^\textup{spc}\xrightarrow{\;\textup{deloop}\;}\ul\mySp_\gl.
		\]
		In light of Theorem~\ref{thm:swan-equivalence}, it will therefore suffice to show that the diagram
		\[
			\begin{tikzcd}
				\ul\GammaS_\gl^\text{spc}\arrow[r,"\textup{deloop}"]\arrow[d,"\sim"'] &[2em] \ul\mySp_\gl\arrow[d,"\sim"]\\
				\ul\Mack_\gl(\Spc)\arrow[r,"\ul\Mack_\gl(\ell)"'] & \ul\Mack_\gl(\Sp)
			\end{tikzcd}
		\]
		commutes up to preferred equivalence. We saw as part of Construction~\ref{constr:deloop-gl} that the upper horizontal map is a left adjoint; moreover, also the bottom horizontal map is a left adjoint, with right adjoint given by postcomposing with the right adjoint of $\ell$. By Theorem~\ref{thm:pres-univ-prop-Gamma}, it will thus suffice to chase through the object $\mathbb P(1)\in\GammaS_\gl^\text{spc}$.

		By the definition of the left hand vertical map, the bottom left composite sends $\mathbb P(1)$ to the delooping of $\hom(1,-)\colon\Span_{\Forb}(\Fglo)\to\CMon$, which is in turn the image of the global sphere under the right hand vertical equivalence, by definition of the latter. Finally, we recalled in Construction~\ref{constr:deloop-gl} that the delooping of $\mathbb P(1)$ is equivalent (in a preferred way) to the global sphere.
	\end{proof}
\end{theorem}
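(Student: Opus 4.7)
The strategy is to reduce the commutativity of (\ref{diag:main-comp}) to the already-proven Theorem~\ref{thm:swan-equivalence} (which handles the non-delooped comparison) together with a compatibility statement between delooping of $G$-global $\Gamma$-spaces and postcomposition with the ordinary delooping functor $\ell\colon\CMon\to\Sp$ at the level of global Mackey functors. The expected main obstacle has already been absorbed into Theorem~\ref{thm:swan-equivalence}; what remains to argue is a compatibility of two left adjoints, which I would check via the universal property of $\ul\GammaS_\gl^{\textup{spc}}$.

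First I would rewrite the bottom composite in (\ref{diag:main-comp}). Non-equivariantly, $\mathbb K\colon\SymMonCat_1\to\Sp$ factors as a classifying-space functor $\SymMonCat_1\to\CMon$ followed by $\ell\colon\CMon\to\Sp$; applying $\ul\Mack_\gl(-)$ and using $\ul\Mack_\gl(\CMon)\simeq\ul\Mack_\gl(\Spc)$ shows that the bottom-right path of (\ref{diag:main-comp}) starting from $\SymMonCat_1^\flat$ agrees with $\ul\Mack_\gl(\ell)\circ S_{\textup{cmon}}\circ S_*$, where $S_{\textup{cmon}}$ is postcomposition with $B$ viewed as landing in $\CMon\simeq\Spc$. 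Unwinding Construction~\ref{constr:swan}, this is exactly $\ul\Mack_\gl(\ell)\circ\Swan$ after precomposing with $\gamma^\flat$.

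Next, I would invoke Theorem~\ref{thm:swan-equivalence} to replace $\Swan\circ\gamma^\flat$ by the composite of $\Gamma_\gl$ with the canonical equivalence $\ul\GammaS_\gl^{\textup{spc}}\iso\ul\Mack_\gl(\Spc)$. Simultaneously the top row of (\ref{diag:main-comp}) equals $\textup{deloop}\circ\Gamma_\gl$ by definition of $\mathbb K_\gl$, followed by the equivalence $\ul\mySp_\gl\iso\ul\Mack_\gl(\Sp)$ from Theorem~\ref{thm:spectra-vs-spectral-Mackey}. So the problem collapses to establishing a canonical equivalence
\[
\ul\Mack_\gl(\ell)\circ\bigl(\ul\GammaS_\gl^{\textup{spc}}\iso\ul\Mack_\gl(\Spc)\bigr)\;\simeq\;\bigl(\ul\mySp_\gl\iso\ul\Mack_\gl(\Sp)\bigr)\circ\textup{deloop}
\]
as global functors $\ul\GammaS_\gl^{\textup{spc}}\to\ul\Mack_\gl(\Sp)$.

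For this last step I would observe that both sides are left adjoint global functors: $\textup{deloop}$ is a left adjoint by Construction~\ref{constr:deloop-gl}, the two horizontal equivalences are trivially left adjoints, and $\ul\Mack_\gl(\ell)$ admits a right adjoint given by postcomposition with $\Omega^\infty$ (fiberwise by $\ell\dashv\Omega^\infty$, and the Beck--Chevalley condition of Lemma~\ref{lemma:adjunctions-criterion} holds because the functoriality of $\ul\Mack_\gl(-)$ is itself natural in the coefficient $\infty$-category). By the universal property of $\ul\GammaS_\gl^{\textup{spc}}$ (Theorem~\ref{thm:pres-univ-prop-Gamma}), it is then enough to check that both functors send the distinguished object $\mathbb P(1)\in\GammaS_\gl^{\textup{spc}}$ to canonically equivalent objects. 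The upper composite sends $\mathbb P(1)$ to the global sphere (by the characterization of $\textup{deloop}$ recalled at the end of Construction~\ref{constr:deloop-gl}), and under the equivalence of Theorem~\ref{thm:spectra-vs-spectral-Mackey} the global sphere corresponds to $\ell\circ\hom(1,-)\colon\Span_{\Forb}(\Fglo)\to\Sp$. The lower composite sends $\mathbb P(1)$ first to $\hom(1,-)\colon\Span_{\Forb}(\Fglo)\to\Spc$ via the equivalence of Theorem~\ref{thm:canonical-equiv-cmon} and then to $\ell\circ\hom(1,-)$, matching the upper value. Since Theorem~\ref{thm:pres-univ-prop-Gamma} is an equivalence of $\infty$-categories (not just of homotopy categories), this identification is canonical, giving the preferred equivalence filling (\ref{diag:main-comp}).
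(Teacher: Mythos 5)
Your proposal follows essentially the same route as the paper's proof: rewrite the bottom composite as $\ul\Mack_\gl(\ell)\circ\Swan\circ\gamma^\flat$, appeal to Theorem~\ref{thm:swan-equivalence} to reduce to a square comparing $\textup{deloop}$ with $\ul\Mack_\gl(\ell)$, and then resolve that square by checking both left adjoints on the single object $\mathbb{P}(1)$ via the universal property of Theorem~\ref{thm:pres-univ-prop-Gamma}. The argument is correct and matches the paper's in structure and in every essential step.
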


\begin{remark}
	Unlike in Theorem~\ref{thm:swan-equivalence}, the space of equivalences filling $(\ref{diag:main-comp})$ is no longer contractible; instead, combining Theorem~\ref{thm:global-mandell} with the universal property of $\ul\GammaS_\gl^\text{spc}$ shows that it is a torsor over the $E_\infty$-group of automorphisms of the (global, or equivalently non-equivariant) sphere spectrum, i.e.~over $\text{gl}_1(\mathbb S)$.
\end{remark}

\subsection{The equivariant comparison} Fix a finite group $G$. Our final goal is to prove Theorem~\ref{introthm:equivariant} from the introduction, describing equivariant algebraic $K$-theory \`a la \cite{guillou-may,merling} in terms of the \emph{$G$-equivariant monoidal Borel construction}. The former is a specific pointset level functor assigning to a symmetric monoidal 1-category $\Cc$ with $G$-action an orthogonal spectrum $\mathbb K_G(\Cc)$ with $G$-action, or equivalently a symmetric spectrum with $G$-action, viewed through the eyes of the \emph{$G$-equivariant weak equivalences}. The details of this construction will actually not be relevant for our purposes because of the following comparison result:

\begin{proposition}[See {\cite[Theorem~4.1.40]{g-global}}]\label{prop:equiv-vs-gl}
	The diagram
	\[
		\begin{tikzcd}
			\cato{$\bm G$-SymMonCat}\arrow[dr, bend right=15pt,"\mathbb K_G"']\arrow[r,"\mathbb K_\textup{$G$-gl}\,"] &[.5em] \mySp_\textup{$G$-gl}\arrow[d,"\fgt"]\\
			& \mySp_G
		\end{tikzcd}
	\]
	commutes up to preferred equivalence, where the vertical map on the right again denotes the derived functor of the identity.\qed
\end{proposition}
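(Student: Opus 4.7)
The plan is to compare the two pointset constructions directly and show that the comparison becomes an equivalence after passing to $G$-equivariant weak equivalences. Both $\mathbb K_G$ (following \cite{guillou-may,merling}) and $\mathbb K_{\textup{$G$-gl}}$ begin by applying a Segal-style $\Gamma$-machine to a symmetric monoidal $G$-category $\mathcal C$. Their divergence lies in the subsequent enhancement and delooping: $\mathbb K_{\textup{$G$-gl}}$ applies $\Fun(E\mathcal M,-)$ and proceeds via the $\Ii$-space machinery of Construction~\ref{constr:deloop-gl}, while the Guillou--May/Merling variant uses a classical $G$-equivariant bar-construction with $EG$-replacement.

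First, I would fix a universal embedding $\iota\colon G\hookrightarrow\mathcal M$ and observe that the induced $G$-equivariant functor $EG\to\iota^*E\mathcal M$ is a $G$-equivariant weak equivalence of $G$-categories, since both are contractible with free $G$-action. Precomposing with this map yields, for any $G$-$\Gamma$-category $X$, a levelwise $G$-equivariant weak equivalence $\Fun(E\mathcal M,X)\to\Fun(EG,X)$. Applied to $\Gamma(\mathcal C)$, this identifies---up to $G$-equivariant weak equivalence---the $G$-global $\Gamma$-space feeding into $\mathbb K_{\textup{$G$-gl}}$ with the $G$-equivariant $\Gamma$-space used by $\mathbb K_G$.

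Next, I would compare the two deloopings in $\mySp_G$. Both $\fgt\circ\textup{deloop}$ and the classical Guillou--May/Merling delooping are left adjoints to the forgetful functors from $\mySp_G$ to their respective $\infty$-categories of special $G$-$\Gamma$-spaces; by a Barratt--Priddy--Quillen style argument they are uniquely characterized by their value on the free object, which in both cases is the $G$-equivariant sphere spectrum. For $\fgt\circ\textup{deloop}$ this is built into the construction, since the global sphere $\mathbb S$ restricts to the $G$-equivariant sphere by the compatibility recorded in the variant of Theorem on equivariant stability above; for the classical delooping this is standard Segal theory.

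The main obstacle is the model-categorical bookkeeping needed to justify that $\Fun(E\mathcal M,-)$ and $\Fun(EG,-)$ induce the \emph{same} homotopy theory of special $\Gamma$-objects after localizing at $G$-equivariant weak equivalences. This uses both the cofibrancy of $E\mathcal M$ in the $G$-global model structure (so that the comparison map is a genuine weak equivalence rather than merely a zigzag, cf.\ \cite[Remark~4.1.28]{g-global}) and the fact that the extra $\mathcal M$-structure does not encode additional $G$-equivariant homotopical information once one has forgotten down to $\mySp_G$. With this in hand, the universal-property argument of the previous paragraph transfers cleanly between the $G$-global and $G$-equivariant settings, producing the desired equivalence.
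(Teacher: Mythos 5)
The paper itself does not prove this proposition: it is stated with a $\qed$ and a citation to \cite[Theorem~4.1.40]{g-global}, and it enters the proof of \cref{thm:equiv-comp} as a black box. Your proposal therefore reconstructs an external input rather than competing with an argument in the text. Your first step is essentially correct: with the right-multiplication action (the one used in the paper's construction of $\Gamma_\gl$), the $G$-action on $\iota^*E\mathcal M$ is free, since $m\iota(g)=m$ with $m$ injective forces $\iota(g)=\id$, so $EG\hookrightarrow\iota^*E\mathcal M$ is an equivalence of free contractible $G$-spaces; this is exactly \cite[Remark~4.1.28]{g-global}, which the paper also uses elsewhere.

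The real gap is in your delooping comparison. You assert that $\fgt\circ\textup{deloop}$ and the Guillou--May/Merling delooping are ``uniquely characterized by their value on the free object.'' The uniqueness you want would come from a universal property of the source, but the only universal property available (\cref{thm:pres-univ-prop-Gamma}) characterizes $\ul\GammaS^{\textup{spc}}_\gl$ as a free presentable equivariantly semiadditive \emph{global} $\infty$-category; the fiber $\GammaS^{\textup{spc}}_{\textup{$G$-gl}}$ over a fixed $G$ does not inherit such a universal property as a plain presentable $\infty$-category, so left adjoints out of it are not determined by a single value. Compounding this, the two deloopings do not even share a source: $\fgt\circ\textup{deloop}$ is defined on special $G$-global $\Gamma$-spaces, whereas the Guillou--May/Merling delooping acts on special $G$-equivariant $\Gamma_G$-spaces, so ``both are left adjoints to the forgetful functor from $\mySp_G$'' is ambiguous and does not by itself identify them. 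Finally, the $EG$-replacement in your first paragraph discards the $E\mathcal M$-enrichment, which is precisely what the $(-)[\omega^\bullet]$ and $\mathcal E^\otimes$ machinery of \cref{constr:deloop-gl} consumes to encode genuine representations; after this replacement the $G$-global delooping no longer applies as a machine, and one needs a direct pointset comparison of the two delooping \emph{constructions}, which is the actual content of the cited proof in \cite{g-global}.
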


On the other hand, Construction~\ref{constr:monoidal-borel} has the following $G$-equivariant analogue, see e.g.\ \cite[Construction~8.1]{spectral-mackey-two}, \cite[Proposition~3.3.3]{hilman2022parametrised}, or \cite[Example~3.6(i)]{puetzstueck-new}:

\begin{construction}
	Consider $G$ as a 1-object groupoid again. We have a fully faithful inclusion $G^\op\hookrightarrow\mathbb F_G$ sending the unique object to the transitive free $G$-set $G$, and a group element $g$ to the map $G\to G, x\mapsto xg$. The composite $BG\to \mathbb F_G^\op\hookrightarrow\Span(\mathbb F_G)$ then extends uniquely to a functor $T\colon\Span(\mathbb F)\times BG\to\Span(\mathbb F_G)$ preserving direct sums in the first variable, and right Kan extension along this provides us with a functor
	\[\hskip-16.378pt\hfuzz=16.378pt
		T_*\colon\Fun(G,\cat{SymMonCat}_\infty)\to\Fun^\times(\Span(\mathbb F_G),\Cat_\infty)\simeq\Fun^\times(\Span(\mathbb F_G),\SymMonCat_\infty)\rlap.
	\]
\end{construction}

We then have the following precise version of Theorem~\ref{introthm:equivariant} from the introduction:

\begin{theorem}\label{thm:equiv-comp}
	The diagram
	\[
		\begin{tikzcd}[cramped]
			\cato{$\bm G$-SymMonCat}\arrow[r,"\gamma"]\arrow[d, "{\mathbb{K}}_G"'] &[-1em] \Fun(G,\cat{SymMonCat}_1)\arrow[r,"T_*\,"] &[-.5em] \Fun^\times(\Span(\mathbb F_G),\SymMonCat_\infty)\arrow[d,"\mathbb K\circ-"]\\
			\mySp_G\arrow[rr,"\sim"'] && \Fun^\times(\Span(\mathbb F_G),\Sp)
		\end{tikzcd}
	\]
	commutes up to preferred equivalence. Here the bottom equivalence is the one from Theorem~\ref{thm:spectra-gl-vs-equiv-vs-Mackey}.
	\begin{proof}
		By Proposition~\ref{prop:equiv-vs-gl}, the bottom composite is equivalent to
		\[
			\cato{$\bm G$-SymMonCat}\xrightarrow{\;\mathbb K_\text{$G$-gl}\;} \mySp_\text{$G$-gl}\xrightarrow{\;\fgt\;}\mySp_\text{$G$}\iso\Fun^\times\hskip-1pt(\Span(\mathbb F_G),\Sp)=\Mack_G(\Sp).
		\]
		Passing to right adjoints in Theorem~\ref{thm:spectra-gl-vs-equiv-vs-Mackey} shows that this is in turn equivalent to
		\[
			\cato{$\bm G$-SymMonCat}\xrightarrow{\;\mathbb K_\text{$G$-gl}\;}\mySp_\text{$G$-gl}\iso\Mack_\text{$G$-gl}(\Sp)\xrightarrow{\;\fgt\;}\Mack_\text{$G$}(\Sp),
		\]
		where the unlabelled equivalence $\mySp_{\text{$G$-gl}}\iso\Mack_\text{$G$-gl}(\Sp)$ is the one we considered throughout. We are therefore reduced to showing that the diagram
		\[
			\begin{tikzcd}[cramped]
				\cato{$\bm G$-SymMonCat}\arrow[dd,"\mathbb K_\text{$G$-gl}"']\arrow[r,"\gamma"] & \Fun(G,\SymMonCat_1)\arrow[d,"S_*"{description}]\arrow[dr, bend left=15pt,"T_*"]\\
				& \Mack_\text{$G$-gl}(\SymMonCat_\infty)\arrow[d,"\mathbb K\circ-"{description}]\arrow[r,"\fgt"{description}] & \Mack_G(\SymMonCat_\infty)\arrow[d,"\mathbb K\circ-"]\\
				\mySp_\text{$G$-gl} \arrow[r,"\sim"'] & \Mack_\text{$G$-\gl}(\Sp)\arrow[r,"\fgt"'] & \Mack_G(\Sp)
			\end{tikzcd}
		\]
		commutes up to preferred equivalence. However, for the left hand rectangle this is the content of Theorem~\ref{thm:main-comp}, the bottom right rectangle commutes simply by functoriality, and commutativity of the top right triangle is \cite[Proposition 3.24]{puetzstueck-new}.
	\end{proof}
\end{theorem}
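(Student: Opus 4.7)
The plan is to reduce the equivariant statement to the already-established global comparison (Theorem~\ref{thm:main-comp}), using that genuine $G$-equivariant $K$-theory is obtained from $G$-global $K$-theory by forgetting. More precisely, I would first invoke Proposition~\ref{prop:equiv-vs-gl} to replace $\mathbb{K}_G$ in the bottom-left corner by the composite $\fgt\circ\mathbb{K}_\text{$G$-gl}$. Simultaneously, Theorem~\ref{thm:spectra-gl-vs-equiv-vs-Mackey}(2) identifies the bottom equivalence $\mySp_G\iso\Mack_G(\Sp)$ (passing to right adjoints) as the restriction of the $G$-global Mackey functor equivalence $\mySp_\text{$G$-gl}\iso\Mack_\text{$G$-gl}(\Sp)$ along the fully faithful inclusion, which is compatible with the forgetful functors on both sides. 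After these substitutions, the target of the outer diagram becomes the composite $\mySp_\text{$G$-gl}\iso\Mack_\text{$G$-gl}(\Sp)\xrightarrow{\fgt}\Mack_G(\Sp)$.

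Next, I would interpose $\Mack_\text{$G$-gl}(\SymMonCat_\infty)$ between the top and bottom rows by factoring the right-hand vertical map as $\mathbb{K}\circ-$ applied first to the $G$-global Mackey functor category and then forgetting. This produces a large diagram with three natural pieces: (i) a left rectangle of the form appearing in Theorem~\ref{thm:main-comp} (with $S_*$ on top and $\mathbb{K}_\text{$G$-gl}$ on the bottom), which commutes by the global comparison; (ii) a bottom-right rectangle comparing $\Mack_\text{$G$-gl}(\Sp)\to\Mack_G(\Sp)$ with the analogous restriction on $\SymMonCat_\infty$-valued Mackey functors, which commutes by pure functoriality of $\fgt$ in the coefficient $\infty$-category; and (iii) a top-right triangle asserting that $\fgt\circ S_*\simeq T_*$ as functors $\Fun(G,\SymMonCat_1)\to\Mack_G(\SymMonCat_\infty)$.

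The technical heart of the argument is piece (iii), the compatibility between the $G$-global monoidal Borel construction and its $G$-equivariant counterpart. This is exactly the statement that right Kan extending a $G$-action along $G\hookrightarrow\Span_{\Forb}(\Fglo_{/G})$ (i.e.\ applying $S_*$) and then restricting to the subcategory $\Span(\mathbb{F}_G)\hookrightarrow\Span_{\Forb}(\Fglo_{/G})$ agrees with right Kan extending directly along $G\hookrightarrow\Span(\mathbb{F}_G)$ (i.e.\ applying $T_*$). Rather than prove this by hand, I would cite \cite[Proposition~3.24]{puetzstueck-new}, where this compatibility is established. Combining pieces (i)--(iii) then yields the commutativity of the outer diagram up to a preferred equivalence, inherited from the preferred equivalences produced by Theorems~\ref{thm:main-comp} and~\ref{thm:spectra-gl-vs-equiv-vs-Mackey} together with the identification of Proposition~\ref{prop:equiv-vs-gl}.
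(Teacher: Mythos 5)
Your proposal is correct and follows essentially the same route as the paper's proof: reduce to the $G$-global case via Proposition~\ref{prop:equiv-vs-gl} and Theorem~\ref{thm:spectra-gl-vs-equiv-vs-Mackey}(2), then split the resulting large diagram into the same three pieces handled by Theorem~\ref{thm:main-comp}, functoriality of $\fgt$, and Pützstück's Proposition~3.24, respectively.
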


\bibliography{reference}
\end{document}